\documentclass[a4paper,11pt,british]{article}
\usepackage{setspace,graphicx,
epstopdf,amsmath,amsfonts,amsgen, mathtools,
amstext,amsthm,amsbsy,amsopn,amssymb,
tikz,
parskip,verbatim,mathrsfs,enumerate,
xcolor,comment}  
\usepackage[utf8]{inputenc}   
\usepackage[top=2.0cm, bottom=2.15cm, left=2.0cm, right=2.0cm]{geometry}
\usepackage[square,numbers]{natbib} 
\usepackage[pdfborder={0 0 0}]{hyperref}

\usetikzlibrary{arrows.meta}

\def\b0{\boldsymbol{0}}

 \newcommand{\C}     {\mathbb{C}} 
\newcommand{\R}     {\mathbb{R}} 
\newcommand{\Z}     {\mathbb{Z}} 
\newcommand{\N}     {\mathbb{N}}

\newcommand{\T}     {\mathbb{T}}

\newcommand{\Exp}{\mathscr{E}\kern-0.2mm{\operatorname{xp}}}
\newcommand{\Log}{\mathscr{L}\kern-0.2mm{\operatorname{og}}}

\def\1{{\mathchoice {1\mskip-4mu\mathrm l}      
{1\mskip-4mu\mathrm l} 
{1\mskip-4.5mu\mathrm l} {1\mskip-5mu\mathrm l}}}

\numberwithin{equation}{section}
\numberwithin{figure}{section}
\newtheoremstyle{plain}
  {6pt}
  {4pt}
  {\slshape}
  {}
  {\bfseries}
  {.}
  {0.5em}
  {}%
\newtheorem{thm}{\protect\theoremname}[section]
\newtheorem{defn}{\protect\definitionname}[section]
  
  \newtheorem{prop}[thm]{\protect\propositionname}
  \newtheorem{rem}[thm]{\protect\remarkname}
  \newtheorem{cor}[thm]{\protect\corollaryname}
  \newtheorem{lem}[thm]{\protect\lemmaname}
  \numberwithin{thm}{section}

\usepackage[british]{babel}

\providecommand{\theoremname}{Theorem}
\providecommand{\definitionname}{Definition}
\providecommand{\factname}{Fact}
\providecommand{\propositionname}{Proposition}
\providecommand{\remarkname}{Remark}
\providecommand{\corollaryname}{Corollary}
\providecommand{\lemmaname}{Lemma}

\usepackage{authblk}
\title{Dimer model and random lattice permutations \\ with general weights}

\author[1]{Andreas  Klippel}
\author[2]{Lorenzo Taggi}
\author[3]{Wei Wu}
\affil[1]{\small{Johannes Gutenberg-Universität Mainz, Mathematik Fachbereich, Mainz, Germany}}
\affil[2]{\small{Sapienza Universit\`a di Roma, Dipartimento di Matematica, Roma, Italy}}
\affil[3]{\small{NYU Shanghai,  Mathematics Department and NYU-ECNU Math Institute, Shanghai, China}}
\date{\today}

\begin{document}

\maketitle

\begin{abstract}
The dimer model and random lattice permutations are two fundamental
objects at the interface of probability, combinatorics, and mathematical
physics.
We study these models  on finite periodic boxes in $\mathbb Z^d$
within a common framework.

For the dimer model,  edges connecting arbitrary vertices
carry a weight which depends on their {relative displacement}
 and dimer configurations are weighted through their occupied edges. 
 Superimposing two independent
perfect matchings gives the double-dimer model, whose configurations are
collections of disjoint loops.
Permutations, instead,  are weighted through the spatial displacement 
of their jumps.

{For broad classes of weights of finite or infinite range we prove long-range order 
and the occurrence of macroscopic loops. 
This extends nearest-neighbour results to
arbitrary-range edges and jumps. 
In particular,  long-range  weights yield long-range order and macroscopic loops already in dimensions $d=1,2$.}
In dimension two, this behaviour is
qualitatively different from that of the nearest-neighbour model.

We complement these results with sharp absence criteria.
\end{abstract}

\section{Introduction}
Perfect matchings and permutations are fundamental discrete structures.
Probability measures on perfect matchings give rise to dimer-type models,
while weighting permutation jumps according to their spatial displacement
leads to spatial random permutations. 
Superimposing two independent
perfect matchings gives the double-dimer model, whose configurations are
collections of disjoint loops.
These  families connect combinatorics and
probability with statistical mechanics and mathematical physics. Planar
dimer models, in particular, have deep connections with discrete complex
analysis, random surfaces, and conformally invariant probability, whereas
spatial random permutations are related to the interacting Bose gas.

This paper studies long-range order and the presence of macroscopic loops
in dimer-type models and random lattice permutations with spatial weights
of bounded or unbounded range. 
Previous work has focused mainly on
nearest-neighbour or planar settings. Allowing dimer edges and permutation
jumps of arbitrary length changes the picture qualitatively. For broad
classes of translation-invariant weights, we prove long-range order and,
in the bipartite setting, the occurrence of macroscopic loops and
permutation cycles.
Our results hold in arbitrary dimension and include long-range
order and macroscopic loops already in dimensions \(d=1,2\). 

We also
identify complementary mechanisms preventing long-range order, governed
by the weight assigned to monomers or fixed points and, in dimension one,
by the tail of the spatial weights.

\paragraph{Dimer and double-dimer models.}
The dimer model is a fundamental object in probability theory,
statistical mechanics, and combinatorics. On planar graphs it is by now
well understood, mainly thanks to Pfaffian techniques introduced
independently by Kasteleyn and by Temperley and Fisher
\cite{Kasteleyn,Temperley}. These techniques allow exact computations and
have led to a detailed description of correlations and scaling limits;
see, for example,
\cite{Kenyon1997,Kenyon2001,KenyonOkounkovSheffield2006}. Beyond
planarity, however, much less is known, and basic questions about
correlations, loop structure, and scaling limits remain largely open.
Recent developments include a large deviation principle for the flow
function of three-dimensional dimer configurations \cite{Chandgotia},
higher-dimensional analogues of the honeycomb dimer model
\cite{Lammers21}, macroscopic double-dimer loops
\cite{QuitmannTaggi2}, positive monomer correlations
\cite{T}, the limiting shape of the multinomial dimer model
\cite{KenyonWolfram}, and local dimer dynamics in higher dimensions
\cite{HartarskyLichevToninelli}; see also
\cite{GiulianiToninelli,TaggiWu} for results in low-dimensional
non-planar settings.

We consider dimer and double-dimer models on \(\mathbb Z^d\)-like graphs
with edges joining arbitrary pairs of vertices and with weights depending
on their displacement. Our main observables are the monomer--monomer
correlation and the loop structure of the associated double-dimer model,
obtained by superimposing two independent dimer configurations. On
\(\mathbb Z^2\), decay of monomer correlations was conjectured by Fisher
and Stephenson \cite{FisherStephenson1963} and later proved rigorously by
Dub\'edat \cite{Dub}; polynomial decay of double-dimer loop connections
was established in \cite{Dubedat}, while conformal invariance of planar
double-dimer loops was proved in \cite{Kenyondoubledimer}. By contrast,
in dimensions \(d\geq3\), uniformly positive monomer correlations and
macroscopic double-dimer loops were established in
\cite{T,QuitmannTaggi2}. These results were restricted to
nearest-neighbour models.

Our results show that these phenomena persist for much broader classes of
weights. For bipartite OS-positive weights satisfying an explicit
Green-function condition, we prove uniform positivity of monomer
correlations and the occurrence of macroscopic loops. Our criteria apply
to natural families of polynomially and exponentially decaying weights.
More importantly, sufficiently spread-out long-range weights yield
long-range order and macroscopic loops even in dimensions \(d=1,2\). In
dimension two, this behaviour is qualitatively different from that of
the nearest-neighbour model.

\paragraph{Monomer double-dimer model and random lattice permutations.}
The monomer double-dimer model (MDD) extends the double-dimer model by
allowing monomers, whose density is controlled by the monomer activity.
It admits a natural interpretation through a duplicated graph. Consider
two copies of \(G\), with a vertical edge joining the two copies of each
vertex. A dimer configuration on the duplicated graph projects onto a
pair of dimer configurations on \(G\) having the same set of monomers,
with a vertical dimer interpreted as a monomer at the corresponding
vertex. Thus, the projected configuration is precisely an MDD
configuration. In this representation the vertical edges generally
destroy planarity even when \(G\) is planar. When their weight is zero,
the two dimer configurations are independent and the ordinary
double-dimer model is recovered. Classical Pfaffian techniques therefore
do not directly apply to the general MDD model; see also \cite{Jerrum}.

Random permutations and their cycle structures are classical objects in
probability and combinatorics, studied under the uniform law, the Ewens
family, and more general weighted measures; see, for example,
\cite{ArratiaTavare1992,BetzUeltschiVelenik2011}. 
Spatial random
permutations form a natural geometric class of non-uniform random
permutations, in which the weight assigned to a jump depends on the
locations of its endpoints. Here we consider a lattice  version of this construction, which we call the random
lattice-permutation (RLP) model. 
Given the vertex set \(V\) of a finite
periodic box, a bijection $
\pi:V\longrightarrow V
$
is sampled with probability proportional to
$
\prod_{x\in V} w_{x,\pi(x)},
$
where \(w_{x,y}\) are some weights depending only on the 
 displacement from the vertex \(x\) to the vertex \(y\). 
 Every permutation
decomposes into disjoint oriented cycles, with fixed points corresponding
to cycles of length one. In particular, \(w_{x,x}=w(0)\) is the weight
assigned to a fixed point.

Spatial random permutations are motivated in part by their connection
with the Bose gas, where infinite permutation cycles are closely related
to Bose--Einstein condensation; see, among others,
\cite{BetzUeltschiPD2011,ElboimPeled2019,
BetzUeltschi2008,BetzUeltschi2011}. In the lattice model considered here,
the underlying point configuration is fixed, and bijectivity imposes a
rigid hard-core constraint: the cycles are mutually vertex-disjoint and
cover all lattice sites. Related random lattice-permutation models have
been studied in \cite{Betz2014,BiskupRichthammer2015}, while the
formation of macroscopic cycles was investigated numerically in
\cite{Grosskinsky2013}. They also arise as the hard-core case of the
interacting random-walk loop models considered in \cite{QuitmannTaggi};
the macroscopic-loop result proved there requires a sufficiently large
local-time cutoff and therefore does not cover RLP.

When the underlying graph is bipartite and all nonzero weights between
distinct vertices connect opposite bipartition classes, the MDD and RLP
models are equivalent, with monomers corresponding to fixed points of
the permutation. Our
results also cover non-bipartite weights, for which this correspondence
no longer holds.

\paragraph{Main results for MDD and RLP.}
Theorems~\ref{thm:longrangeorder} and
\ref{thm:longrangeorder2} give two general sufficient conditions for
long-range order. The first applies to bipartite weights and additionally
yields macroscopic loops, whereas the second applies to general weights
under an analytic condition involving their Fourier transform. Both
conditions are formulated quantitatively in terms of the Green function
at the origin of the random walk associated with the weights. They apply
to broad families of long-range interactions and yield ordered regimes
in dimensions where nearest-neighbour models do not exhibit long-range
order.

Our second main result concerns the absence of long-range order and holds
in considerable generality.
Theorem~\ref{thm:decay-large-monomer} shows that if
\[
w(0)>\frac12,
\]
then long-range order is impossible.
Here \(w(0)\) is the monomer activity in the MDD model and the weight
associated with fixed points in the RLP model.
The threshold \(1/2\) is optimal in a universal sense for random lattice
permutations: for every \(w(0)<1/2\), we construct examples exhibiting
long-range order.

Our third and fourth main results concern dimension one, where together
they provide a particularly precise picture.
Theorem~\ref{thm:positive-activity-one-dimensional} proves absence of
long-range order for lazy-bipartite weights with finite first moment and
$w(0)>0$.

Our fourth main result, Theorem~\ref{thm:zero-activity-one-dimensional},
shows that the positive-activity assumption cannot be dropped: at zero
activity, long-range order and macroscopic loops occur even under a
finite-first-moment assumption.

Our third main result concerns dimension one, where two complementary
theorems yield a particularly precise picture.
Theorem~\ref{thm:positive-activity-one-dimensional} proves absence of
long-range order for lazy-bipartite weights with finite first moment and
$w(0)>0$.
Theorem~\ref{thm:zero-activity-one-dimensional} shows that the
positive-activity assumption is essential: at zero activity, long-range
order and macroscopic loops occur even under a finite-first-moment
assumption.
Moreover, when the first moment is infinite, our long-range examples show
that long-range order may occur even at positive activity.
Thus, in dimension one, the presence or absence of long-range order is
governed rather precisely by the interplay between the activity and the
tail of the weights.

\paragraph{Proof techniques.}
A crucial ingredient in our proofs of long-range order is the complex
spin representation introduced in \cite{TaggiWu} for  nearest neighbour weights. 
One of the main technical contributions of
the present work is to extend this framework to weights of arbitrary
range.
We show that OS-positivity of the weights provides a natural
condition under which the associated spin measure remains reflection
positive, also in the presence of long-range edges. We can therefore
apply Gaussian domination and derive an infrared bound directly in the
spin framework, following the classical reflection-positivity approach
of \cite{FrohlichSpencer,FrohlichLiebSimon}.

In the nearest-neighbour setting, reflection positivity had already been
used in \cite{QuitmannTaggi,T}, without relying on the spin
representation, starting from a reflection-positivity property
formulated directly in path space and first observed in
\cite{ChayesPryadkoShtengel}. The argument requires that, conditional on
every realization of the connections crossing the reflection plane, the
configurations induced on the two half-tori be independent and, after
reflecting one half-torus onto the other, have the same conditional law.

No analogous nontrivial decomposition is generally available for
unbounded-range edges. If one conditions only on some of the crossing
connections, additional unconditioned long edges may still couple the
two half-tori, so conditional independence fails. 
A further obstruction arises already for finite-range weights extending
beyond nearest neighbours: even when the two halves can be separated,
their conditional laws need not coincide after reflection. The spin
representation bypasses these obstructions, substantially enlarges the
class of models to which the infrared-bound method applies, and provides
a direct derivation of the infrared bound entirely within the spin
framework.

Once the infrared bound has been established, however, deducing
long-range order from it differs from the classical spin-system
argument. In an \(O(N)\) model, the identity
\(\langle (S_x^1)^2\rangle=1/N\) provides a fixed local normalization
which, together with the infrared bound, yields long-range order. No
analogous useful normalization is available here. 
We instead use the normalization provided
by edge and jump probabilities, which leads to a weighted spatial
average of the two-point function. In Fourier space, this introduces the
Fourier transform of the weights as a multiplier, and this multiplier
may change sign. We overcome the resulting obstruction by pairing
Fourier modes in the bipartite case and by a diagonal-shift argument for
general weights; see
Section~\ref{sect:prooftheorems} for the precise Fourier identities and
estimates.

Our proofs of absence of long-range order are of a different nature.
The criterion \(w(0)>1/2\) follows from a simple energy--entropy argument,
in which the threshold \(1/2\) emerges naturally. The one-dimensional result follows from a decomposition
of pairs of interval permutations at their common free cuts.
Under the finite-first-moment assumption on the lazy-bipartite weights, long-range order
is ruled out independently of the value of the positive monomer
activity.

Finally, our approach to long-range order also applies to models in which
the loop configuration may visit a vertex more than once, as in the
broader framework considered in \cite{QuitmannTaggi}. We do not pursue
this additional generality, in order to keep the proofs transparent and
focus on the three principal models considered above.

\section{Definitions and main results}
We now introduce the  models to which our results apply: the dimer model, the double-dimer model,   the monomer double-dimer model,  and random lattice permutations. 
We introduce these models on {a finite undirected simple graph $G=(V,E)$, equipped with symmetric non-negative weights}
$w=(w_{x,y})_{x,y\in V}$. {Throughout, $E$ contains every unordered pair $\{x,y\}$ with $x\neq y$ and $w_{x,y}>0$.}

\subsection{Dimer and double-dimer model}
A \textit{perfect matching} in $G$ is a subset $D \subset E$ such that each vertex of the graph $(V,D)$ has degree precisely one. 
We let $\mathcal{D}_G$ be the set of  perfect matchings in $G$.
Suppose that $G$ admits a perfect matching of positive weight. 
Given a set $A \subset V$, we let $G_A$ be the {subgraph} of $G$ with vertex set
$V \setminus A$ and with edge set consisting of all the edges  in $E$ which 
do not touch any vertex in $A$.
We define 
$$
Z^{dim}_{G,w} (A) := \sum\limits_{D  \in \mathcal{D}_{G_A}}   \prod_{  \{x,y\} \in D    } w_{x,y}
$$
and set $Z^{dim}_{G,w} = Z^{\mathrm{dim}}_{G,w} (\emptyset)$,
$Z_{G,w}^{dim}(x,y) = Z_{G,w}^{dim}(\{x,y\})$,
$Z_{G,w}^{dim}(x,x) = Z_{G,w}^{dim}(\{x\})$.

\begin{defn}[Dimer model]
The \textit{dimer model} is the probability measure ${\mathbb P_{G,w}^{\mathrm{dim}}}$ on $\mathcal{D}_G$
assigning to each perfect matching  $D \in \mathcal{D}_G$
the weight
$$
{\mathbb P_{G,w}^{\mathrm{dim}}}(D)  =   \frac{\prod_{  \{x,y\} \in D    } w_{x,y}}{ Z^{\mathrm{dim}}_{G,w} }
$$
\end{defn}
An important quantity in the analysis of the model is the monomer correlation function,
corresponding to the ratio of the weight of configurations in $G$ {after} the removal of two vertices 
and the weight of all configurations in $G$.
\begin{defn}[Monomer correlation function]
We define for each pair of vertices $x, y \in V$ the monomer correlation function
\begin{equation}\label{eq:monomerdimer}
 \mathcal{C}_{G,w}(x,y) := \frac{  {Z}^{dim}_{G,w}(x,y) }{  Z^{\mathrm{dim}}_{G,w}}.
\end{equation}
\end{defn}

The \emph{double-dimer model} is defined as the superposition of two independent dimer
configurations. The resulting configuration decomposes into a collection of mutually
disjoint, vertex-self-avoiding loops. {We denote this product measure by
$\mathbb P^{\mathrm{d.d.}}_{G,w}$ and its partition function by
$Z^{\mathrm{d.d.}}_{G,w}:=(Z^{\mathrm{dim}}_{G,w})^2$.}

\subsection{Monomer double-dimer model}
The configuration space is
\[
\Omega := \bigl\{ (m^1,m^2) \in \{0,1\}^E \times \{0,1\}^E \; : \;
n_x(m^1) = n_x(m^2) \in \{0,1\} \ \text{for all } x \in V \bigr\},
\]
where the vector $m^1$ represents the occupation of the \emph{blue dimers},
the vector $m^2$ represents the occupation of edges by \emph{red dimers},
and for each $x \in V$ and $m \in (\mathbb{N}_0)^E$ we define the \emph{local time}
\[
n_x(m) := \sum_{\substack{y\in V:\\ \{x,y\}\in E}}m_{x,y}{.}
\]
Thus, admissible configurations are such that each vertex is incident to the same number of blue and red dimers, and this number is either one or zero. In the latter case, we say that the vertex carries a \emph{monomer}.
We now introduce a probability measure on $\Omega$.

\begin{defn}[Monomer double-dimer model]
\label{def:monomerdoubledimer}
For each $m=(m^1,m^2)\in\Omega$, we define
\[
{\mathbb P^{\mathrm{mdd}}_{G,w}(m)}
:=\frac{1}{{Z^{\mathrm{mdd}}_{G,w}}}
\prod_{x\in V} w_{x,x}^{\,1-n_x(m^1)}
\prod_{\{x,y\}\in E} w_{x,y}^{\,m_{x,y}^1+m_{x,y}^2},
\]
where ${Z^{\mathrm{mdd}}_{G,w}}$ is a normalizing constant.
\end{defn}

In other words,  the weight of a configuration factorizes as follows: each monomer at $x$ contributes a factor $w_{x,x}$, while each blue or red dimer contributes a factor given by the weight of the edge on which it is located.

\begin{defn}[Loops and loop length]
For a configuration $m=(m^1,m^2)\in\Omega$, consider the multigraph on the vertex set $V$ in which each edge $\{u,v\}\in E$ appears with multiplicity
$
m^1_{u,v}+m^2_{u,v}\in\{0,1,2\}.
$
The multigraph decomposes into a disjoint union of cycles (loops) and isolated vertices (monomers).
For $x\in V$, we define ${\mathcal L_x(m)}$ {to be} \textit{the loop touching $x$}. If $x$ is a monomer, we set
$
{\mathcal L_x(m)}=\{x\}.
$
We define the length of ${\mathcal L_x(m)}$, ${|\mathcal L_x|(m)}$ as \textit{the total number of dimers} (blue and red) in the loop. In particular,
$
{|\mathcal L_x|(m)}=0$ if and only if $x$ is a monomer. 
\end{defn}

{We write $x\leftrightarrow y$ when $x$ and $y$ belong to the same loop.}

In order to define the two-point function we introduce the set $\Omega_{x,y}$
as follows.  For each $x, y \in V$ {with $x\neq y$}, we set  
\begin{multline}
\Omega_{x,y}
:=
\Bigl\{(m^1,m^2)\in\{0,1\}^E\times\{0,1\}^E:\;
n_z(m^1)=n_z(m^2)\in\{0,1\}
\ \text{for all } z\in V\setminus\{x,y\},\\
n_x(m^1)=n_y(m^1)=0,
\qquad
n_x(m^2)=n_y(m^2)=1
\Bigr\}.
\end{multline}
In other words, configurations in $\Omega_{x,y}$ satisfy the usual monomer
double--dimer constraint at all vertices except at $x$ and $y$, where we prescribe
additional {\emph{defect constraints}}.
More precisely,  the vertices $x$ and $y$ are treated as
monomers for the first colour and are covered by dimers of the second colour.
As a consequence, the superposition of the two colours consists of a collection
of mutually disjoint loops together with a single self-avoiding walk connecting
$x$ and $y$.
See also Figure \ref{fig:twopoint-longrange-odd-8x8-threepanel}
for an example.
\begin{figure}[htbp]
\centering
\begin{tikzpicture}[
    scale=0.55, 
    dot/.style={circle, fill=black, inner sep=1.1pt},
    monomer/.style={circle, draw=black, fill=white, inner sep=1.8pt, line width=0.8pt},
    bluebond/.style={-, blue, very thick, shorten >=3.5pt, shorten <=3.5pt},
    redbond/.style={-, red, very thick, densely dotted, shorten >=3.5pt, shorten <=3.5pt},
    bluejump/.style={-, blue, very thick, shorten >=3.5pt, shorten <=3.5pt},
    redjump/.style={-, red, very thick, densely dotted, shorten >=3.5pt, shorten <=3.5pt},
    lab/.style={font=\scriptsize}
]

\def\DrawGrid{
    \foreach \x in {0,...,7}{
        \foreach \y in {0,...,7}{
            \node[dot] at (\x,\y) {};
        }
    }
}

\def\DefinePoints{
    \coordinate (xpt) at (0,1);
    \coordinate (ypt) at (7,1);
    \coordinate (mone) at (2,6);
    \coordinate (mtwo) at (5,6);
}

\def\DrawLabels{
    \node[lab, below left] at (xpt) {$x$};
    \node[lab, below right] at (ypt) {$y$};
    \node[lab, above] at (mone) {$m_1$};
    \node[lab, above] at (mtwo) {$m_2$};
}

\def\DrawCommonMonomers{
    \node[monomer] at (mone) {};
    \node[monomer] at (mtwo) {};
}

\def\DrawDoubledHoriz{
    \draw (0,0)--(1,0); \draw (2,0)--(3,0); \draw (4,0)--(5,0); \draw (6,0)--(7,0);
    \draw (2,1)--(3,1); \draw (4,1)--(5,1);
    \draw (2,2)--(3,2); \draw (4,2)--(5,2); \draw (6,2)--(7,2);
    \draw (0,3)--(1,3); \draw (2,3)--(3,3); \draw (4,3)--(5,3); \draw (6,3)--(7,3);
    \draw (1,4)--(2,4); \draw (3,4)--(4,4); \draw (5,4)--(6,4);
    \draw (2,5)--(3,5); \draw (4,5)--(5,5);
    \draw (3,6)--(4,6);
    \draw (2,7)--(3,7); \draw (4,7)--(5,7);
}

\def\DrawDoubledVert{
    \draw (0,4)--(0,5); \draw (7,4)--(7,5);
    \draw (1,5)--(1,6); \draw (6,5)--(6,6);
}

\def\DrawBlueAlone{
    \draw[bluebond] (0,2)--(1,2);
    \draw[bluejump] (1,1) to[bend right=15] (6,1); 
    \draw[bluebond] (0,7)--(1,7);
    \draw[bluebond] (6,7)--(7,7);
    \draw[bluejump] (7,6) to[bend left=15] (0,6); 
}

\def\DrawRedAlone{
    \draw[redbond] (0,1)--(0,2);
    \draw[redbond] (1,2)--(1,1);
    \draw[redbond] (6,1)--(7,1);
    \draw[redbond] (0,6)--(0,7);
    \draw[redbond] (7,7)--(7,6);
    \draw[redjump] (1,7) to[bend left=15] (6,7); 
}

\begin{scope}[xshift=0cm]
    \DefinePoints
    \begin{scope}[bluebond] \DrawDoubledHoriz \DrawDoubledVert \end{scope}
    \DrawBlueAlone
    \DrawGrid
    \DrawCommonMonomers
    \node[monomer] at (xpt) {};
    \node[monomer] at (ypt) {};
    \DrawLabels
    \node[below, font=\scriptsize, align=center] at (3.5,-0.7)
    {(a) Blue dimer configuration\\ monomers \(M\cup\{x,y\}\)};
\end{scope}

\begin{scope}[xshift=8.5cm]
    \DefinePoints
    \begin{scope}[redbond] \DrawDoubledHoriz \DrawDoubledVert \end{scope}
    \DrawRedAlone
    \DrawGrid
    \DrawCommonMonomers
    \DrawLabels
    \node[below, font=\scriptsize, align=center] at (3.5,-0.7)
    {(b) Red dimer configuration\\ monomers \(M\)};
\end{scope}

\begin{scope}[xshift=17.0cm]
    \DefinePoints
    
    \begin{scope}[yshift=2pt, bluebond] \DrawDoubledHoriz \end{scope}
    \begin{scope}[yshift=-2pt, redbond] \DrawDoubledHoriz \end{scope}
    
    \begin{scope}[xshift=-2pt, bluebond] \DrawDoubledVert \end{scope}
    \begin{scope}[xshift=2pt, redbond] \DrawDoubledVert \end{scope}
    
    \DrawBlueAlone
    \DrawRedAlone
    
    \DrawGrid
    \DrawCommonMonomers
    \node[monomer] at (xpt) {};
    \node[monomer] at (ypt) {};
    \DrawLabels
    
    \node[below, font=\scriptsize, align=center] at (3.5,-0.7)
    {(c) Superposition in \(\Omega_{x,y}\):\\ loops and one path from \(x\) to \(y\)};
\end{scope}

\end{tikzpicture}
\caption{Representation of a configuration in $\Omega_{x,y}$ on an $8 \times 8$ grid {when} the distance between $x$ and $y$ is odd.
(a, b) {dimer configurations} with {monomer sets $M\cup\{x,y\}$ and $M$, respectively}. (c) Their superposition yields an element of $\Omega_{x,y}$, consisting of doubled dimers, an alternating loop, and exactly one alternating self-avoiding path connecting $x$ and $y$. }
\label{fig:twopoint-longrange-odd-8x8-threepanel}
\end{figure}

\begin{defn}[Two-point function]
\label{def:twopoint-mdd}
For each pair $x, y \in V$  with $x \neq y$,
we define the two-point function as the ratio
\[
G^{ \mathrm{mdd}}_{G,w}(x,y)
\;:=\;
\frac{Z^{ \mathrm{mdd}}_{G ,w}(x,y)}{Z^{ \mathrm{mdd}}_{G,w}},
\]
where
\[
Z^{ \mathrm{mdd}  }_{G ,w}(x,y)
\;:=\;
\sum_{(m^1,m^2)\in\Omega_{x,y}}
\prod_{z\in V \setminus \{x,y\}} w_{z,z}^{\,1-n_z(m^1)}
\prod_{\{u,v\}\in E} w_{u,v}^{\,m^1_{u,v}+m^2_{u,v}}
\]
and $Z^{ \mathrm{mdd}  }_{G,w}$ is the partition function of the monomer double-dimer model defined
previously.
If $x = y$, we {set} $G^{ \mathrm{mdd}  }_{G,w}(x,x) = 0$. 
\end{defn}

\paragraph{The fully packed regime.}
The monomer double-dimer model reduces to the {double-dimer model}
in the \emph{fully packed regime}, namely in the special case 
\[
w_{x,x}=0 \qquad \text{for every } x\in V.
\]
In this regime, every vertex is covered
by a dimer of each colour almost surely
and the {law of the model}
factorises as the product of two independent
dimer measures. 
This gives for $x \neq y$ the identities
$$
Z^{ \mathrm{mdd}  }_{G,w}
=
Z^{\mathrm{d.d.}}_{G,w}
=
\bigl(Z^{\mathrm{dim}}_{G,w}\bigr)^2,
\quad \quad \quad Z^{ \mathrm{mdd}  }_{G,w}(x,y)
=
Z^{\mathrm{dim}}_{G,w}(x,y) \, Z^{\mathrm{dim}}_{G,w},
$$
which imply
\begin{equation}\label{eq:twopointandmonomer}
G^{ \mathrm{mdd}  }_{G,w}(x,y) = \mathcal{C}_{G,w}(x,y).
\end{equation}

\subsection{Random lattice permutations}  
Let $\Omega^{\mathrm{per}}$ be the set of permutations $\pi$ of the vertices  $V$.

\begin{defn}[{Random lattice-permutation model}]
For  each $\pi \in \Omega^{\mathrm{per}}$,  we assign the weight
\begin{equation}
\label{eq:measurepermutation}
{\mathbb P_{G,w}^{\mathrm{per}}}(\pi) =    
\frac{ \prod_{x \in V} w_{x,  \pi(x)}}{Z^{\mathrm{per}}_{G, w}},
\end{equation}  
   {where} $Z^{\mathrm{per}}_{G, w}$ is a normalising constant.
   Moreover, we define the loop visiting $x$  as the set
   $$
   {\mathcal L_x(\pi)} := \{ y \in V \, : \,    y = \pi^n(x) \mbox{ for some $n \in \mathbb{N}_0$} \}
   $$
   \end{defn}
   The measure 
{$\mathbb P^{\mathrm{per}}_{G,w}$} that we defined is such that,
for each permutation $\pi \in \Omega^{\mathrm{per}}$,
each vertex $x \in V$ which is mapped to $y$  (possibly with $x = y$)
gets a multiplicative weight $w_{x,y}$.
For example,  in the special case ${w_{x,y}} = 1$ for each $x, y \in V$,  we obtain random permutations with uniform weights, a
well studied and relatively simple object. 
In order to define the two-point function we define the set of bijections from $V \setminus \{y\} $
to $V \setminus \{x\}$,
\[
\Omega^{\mathrm{per}}_{x,y}
:=
\left\{
\pi:V\setminus\{y\}\to V\setminus\{x\}
\,:\,
\pi \text{ is a bijection}
\right\}.
\]
{In other words, if $x\neq y$, then $x$ and $y$ respectively play the roles of a source and a sink of a self-avoiding walk from $x$ to $y$.}
Finally we define for each pair $x, y \in V$ (with possibly $x = y$) the two-point function
\begin{equation}\label{eq:twopointrlp}
G^{\mathrm{per}}_{G,w}(x,y) := \frac{1}{Z^{\mathrm{per}}_{G, w}} \sum\limits_{\pi \in \Omega^{\mathrm{per}}_{x,y}}
\prod_{z \in V \setminus \{y\}} w_{z, \pi(z)}.
\end{equation}

\paragraph{Bipartite and lazy-bipartite weights.}
We say that the weights are \emph{{lazy-bipartite}} if there exists a
partition \(V=V^{\mathrm e}\cup V^{\mathrm o}\) such that
\begin{equation}\label{eq:lazybipartitexcondition}
w_{x,y}=0
\qquad
\text{whenever }x\neq y
\text{ and either }
\{x,y\}\subset V^{\mathrm e}
\text{ or }
\{x,y\}\subset V^{\mathrm o}.
\end{equation}
{On $\mathbb Z^d$ and on even tori, all lattice results below use the canonical parity classes, determined by the parity of $|x|_1$.}
In this case, the spatial random-permutation model and the monomer
double-dimer model are equivalent: fixed points correspond to monomers,
while non-trivial permutation cycles correspond to loops formed by the
superposition of blue and red dimers.

If, in addition,
\(w_{x,x}=0\) for every \(x\in V\), the weights are said to be
\emph{bipartite}, and both models are equivalent to the double-dimer
model. For non-bipartite weights this correspondence generally breaks
down; nevertheless, our results apply to both models independently and
do not require bipartiteness.

\section{Main results}
\label{sect:definitions}
We now introduce the class of weights considered throughout the paper
and state our main results.
For symmetric, translation-invariant, normalized weights, write
$w(x):=w_{0,x}$ and define
\[
\widehat w(k):=\sum_{x\in\mathbb Z^d}w(x)e^{\mathrm i k\cdot x},
\qquad k\in(-\pi,\pi]^d.
\]
Let $\T_L:=(\mathbb Z/L\mathbb Z)^d$ and
$\T_L^*:=(2\pi/L)\T_L$, represented in $(-\pi,\pi]^d$.
The Green's function of the random walk with transition kernel $w$ is
\[
G^{\mathrm{RW}}_{d,w}(x):=\sum_{n\ge0}w^{*n}(x),
\qquad
g_{d,w}:=G^{\mathrm{RW}}_{d,w}(0)
=\int_{(-\pi,\pi]^d}\frac1{1-\widehat w(k)}
\frac{\mathrm dk}{(2\pi)^d}\in[1,\infty].
\]
\begin{defn}[Admissible weights]
\label{def:admissible}
A function $w:\mathbb Z^d\times \mathbb Z^d \to [0,\infty)$ is called
\emph{admissible} if the following conditions are fulfilled:
\begin{enumerate}[(i)]
\item (symmetry) $w_{x,y}=w_{y,x}$ for all $x,y\in\mathbb Z^d$,
\item (translation invariance) $w_{x,y}=w_{x+z,y+z}$ for all $x,y,z\in\mathbb Z^d$,
\item ({irreducibility}) The additive subgroup generated by
\(\{x\in\mathbb Z^d:{w_{0,x}}>0\}\) is \(\mathbb Z^d\).
\item (normalization) $\sum_{y\in\mathbb Z^d} w_{0,y}=1$,
\item (OS-positivity) $w$ satisfies Definition~\ref{def:OSpos-quadratic} below.
\item (torus Green-function convergence) Whenever $g_{d,w}<\infty$,
\begin{equation}\tag{H}\label{eq:torus-green-convergence}
\lim_{\substack{L\to\infty\\L\in2\mathbb N}}
\frac1{L^d}\sum_{k\in\T_L^*\setminus\{0\}}
\frac1{1-\widehat w(k)}=g_{d,w}.
\end{equation}
\end{enumerate}
\end{defn}
The assumption 
$
\sum_{z\in\mathbb{Z}^d} w_{0,z}=1
$
is without loss of generality.  
We refer to the value $w(0)$ as \textit{diagonal activity}.
This corresponds to the monomer activity for MDD and to the weight assigned 
to the fixed points of the permutations in RLP.
We refer to Definition~\ref{def:OSpos-quadratic} for the precise definition of
OS-positivity{. We} list below several examples of admissible weights{,
followed by two closure properties of OS-positivity. The bipartite
projections and finite convex combinations of the displayed examples
are admissible as well; see Section~\ref{sect:proofOSpositivity}.}
\begin{enumerate}

\item[(A)] \textit{Nearest neighbours:}
\[
w(x)=
\begin{cases}
\frac1{2d}, & |x|_1=1,\\
0, & \text{otherwise}.
\end{cases}
\]

\item[(B)] \textit{Positive diagonal activity, first and second nearest neighbours:} {Assume $d\geq2$.}
\[
w(x)=
\begin{cases}
\rho  & x=0 ,\\
\beta  & |x|_2=1,\\
\gamma &   |x|_2= \sqrt{2}, \\
0  &   \text{otherwise}.
\end{cases}
\]
with $\rho +  2 d \beta + 2 d (d-1) \gamma = 1$
and 
\begin{equation}\label{eq:secondnearestneighbourvalues}
{0 \leq \rho < 1}  \quad \quad 0 \le \gamma \le \frac{\beta}{2(d-1)}.
\end{equation}

\item[(C)] \textit{Exponential decay:}
for $\gamma>0$ and $\rho\in{[0,1)}$, let
\[
w_{\rho,\gamma}(x)=
\begin{cases}
\rho, & x=0,\\[1mm]
c_d(\rho,\gamma)e^{-\gamma |x|_1}, & x\neq0,
\end{cases}
\]
where
$
c_d(\rho,\gamma)
$ is a normalisation constant.

\item[(D)] \textit{Polynomial decay:}
Let \(q\in\{1,2\}\), \(s>d\), and \(\rho\in{[0,1)}\). Define
\[
w_{\rho,s,q}(x)
:=
\begin{cases}
\rho, & x=0,\\[2mm]
\dfrac{1-\rho}{Z_{s,q}}\dfrac1{|x|_q^s},
& x\neq0,
\end{cases}
\qquad \qquad 
Z_{s,q}
:=
\sum_{z\in\mathbb Z^d\setminus\{0\}}
\frac1{|z|_q^s}.
\]

\item[(E)]
\textit{Regularised polynomial decay:}
Let \(s>d\), \(\gamma>0\), and \(\rho\in{[0,1)}\). Define
\[
w_{\rho,s,\gamma}(x)
:=
\begin{cases}
\rho, & x=0,\\[2mm]
\dfrac{1-\rho}{Z_{s,\gamma}}
\dfrac{1}{(1+\gamma |x|_1)^s},
& x\neq0,
\end{cases}
\qquad  \qquad 
Z_{s,\gamma}
:=
\sum_{z\in\mathbb Z^d\setminus\{0\}}
\frac{1}{(1+\gamma |z|_1)^s}.
\]

\item[(F)] \textit{Bipartite projections:}
If $w$ is OS-positive and $\sum_{|y|_1\ \mathrm{odd}}w(y)>0$, then its bipartite projection
\[
w^{\mathrm{o}}(x)
:=
\frac{w(x)\mathbf 1_{\{|x|_1\ \mathrm{odd}\}}}
{\sum_{y:\ |y|_1\ \mathrm{odd}} w(y)}
\]
is {also} OS-positive.  
Hence,  any bipartite  projection of the examples above is also OS-positive. 

\item[(G)] \textit{Convex combinations:}
{Any} convex combination of weights which are OS-positive is OS-positive.
Hence, any convex combination of the examples above is OS-positive. 
\end{enumerate}

Let us finally stress that the families considered above are far from
exhaustive. For example, one may consider weights of the form
\[
w(x)=\mathbb P(S_N=x),
\]
where \(S\) is a simple random walk and \(N\) is an independent
geometric random variable, or more general OS-positive weights
supported on the unit cube \(\{-1,0,1\}^d\), allowing simultaneous
displacements in several coordinate directions. One could also relax
translation invariance and consider weights that are invariant only
under translations preserving the parity decomposition of
\(\mathbb Z^d\). Our analysis can be adapted to these settings as well.
We do not pursue the greatest possible level of generality here, but
focus instead on a representative collection of natural examples in
order to keep the statements and proofs transparent.

Our results apply to the torus $\mathbb{T}_L$ 
with periodic boundary conditions. 
The natural way of adapting the weights to the torus structure while preserving translation invariance and normalisation is to define the periodised weights as in the following definition.

\begin{defn}[Periodisation of the weights]\label{def:periodicisation}
Let $w:\mathbb Z^d \times \mathbb{Z}^d \to[0,\infty)$ be {an admissible weight}. 
We define their \emph{periodisation} $w^{(L)}:\mathbb T_L\times\mathbb T_L\to[0,\infty)$ by
\[
w^{(L)}_{x,y}
:= \sum_{z\in\mathbb Z^d} w_{x, y + L z}
{.}\]
\end{defn}

{For even $L$, we write
$\T_L^{\mathrm e}:=\{x\in\T_L:|x|_1\text{ is even}\}$ and
$\T_L^{\mathrm o}:=\{x\in\T_L:|x|_1\text{ is odd}\}$.}

When studying the dimer model,  MDD or RLP 
on the torus \(G=\T_L\) with the periodised weights \(w^{(L)}\), obtained from
the original weights \(w:\Z^d\times\Z^d\to{[0,\infty)}\),
we will use the subscript \(_{L,w}\) instead of \(_{\T_L,w^{(L)}}\) throughout,
so that for example \(\mathcal C_{\T_L,w^{(L)}}= \mathcal C_{L,w}\).
Moreover, since we assume translation--invariant weights, we write
\(\mathcal C_{L,w}(x-y)= \mathcal C_{L,w}(x,y)\),
$w^{(L)}(x-y) = w_{x,y}^{(L)}$,
 and adopt the same convention for all quantities.

\subsection{\texorpdfstring{{Long-range order}}{Long-range order} and macroscopic loops}
\label{sect:longrangeordertheorems}
We present {two general results} establishing long-range order and macroscopic loops.
They require the Green's function of the random walk at the origin to be sufficiently small, but rely on two different structural assumptions on the weights. The first assumption is combinatorial in nature and requires the weights to be {bipartite}, whereas the second is analytic and requires their Fourier transform to be not too negative. 

{For} $A \in \{\mathrm{per}, \mathrm{mdd}\}$, {define}
$$
M^{A}_{L,w} :=  \sum_{x \in \T_L} \frac{  G^A_{L,w}(x)}{L^d},
$$
{the Ces\`aro average of the two-point function for MDD or RLP.}

Our first theorem provides a sufficient condition for long-range order 
and macroscopic loops under the bipartite weights assumption. 
We remind the reader that, under this condition,
{the monomer double-dimer model and random lattice permutations are equivalent to the  double-dimer model and their two-point function equals the dimer monomer--monomer correlation.}

\begin{thm}[Long-range order for bipartite weights]
\label{thm:longrangeorder}
Let $d\in\N$, and let $w$ be admissible bipartite weights.
Suppose that
\begin{equation}
\label{eq:condition43}
g_{d,w}< \frac{4}{3}.
\end{equation}
Then, there exists a strictly positive constant  $c = c(w,d)$ such that 
\begin{align}
\label{eq:claim}
\liminf_{\substack{L\to\infty:\ L\in2\N}}
M^{\mathrm{mdd}}_{L,w}
& \geq c, \\
\label{eq:macroscopicloops}
\liminf_{\substack{L\to\infty:\ L\in2\N}}
\frac{\mathbb E^{\mathrm{mdd}}_{L,w}(|\mathcal L_0|)}
     {L^d} &
\geq c,
\end{align}
and, 
for all odd integers $n\in(0,cL)$ and all sufficiently large even $L$,
\begin{align}
\label{eq:pointwise-LRO}
G^{\mathrm{mdd}}_{L,w}({ne_1}) & \geq c, \\
\label{eq:pointwise-macroscopicloops}
\mathbb P^{\mathrm{mdd}}_{L,w}
({ne_1}\in\mathcal L_0) &  \geq c.
\end{align}
\end{thm}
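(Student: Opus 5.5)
The plan is to run the reflection-positivity/infrared-bound argument within the complex spin representation announced in the introduction, and then to convert the infrared bound into a lower bound on $M^{\mathrm{mdd}}_{L,w}$ using the normalisation coming from edge occupation probabilities.

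\textbf{Step 1: spin representation and infrared bound.} I would express $Z^{\mathrm{mdd}}_{L,w}$ and $Z^{\mathrm{mdd}}_{L,w}(x,y)$ as integrals over complex spins $\varphi_x$. In these integrals the quadratic form $\sum_{x,y} w^{(L)}_{x,y}\bar\varphi_x\varphi_y$ appears in the exponent. The single-site measure is chosen so that only local times in $\{0,1\}$ survive, and the two colours correspond to the two terms $\bar\varphi\varphi'$ and $\varphi\bar\varphi'$. In this representation $G^{\mathrm{mdd}}_{L,w}(x,y)$ becomes a spin two-point function $\langle \varphi_x\bar\varphi_y\rangle$. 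OS-positivity (Definition~\ref{def:OSpos-quadratic}) makes the spin measure reflection positive with respect to planes through sites and through bonds. Gaussian domination then yields, for $k\in\T_L^*\setminus\{0\}$, an infrared bound
\[
\widehat G_L(k):=\sum_x G^{\mathrm{mdd}}_{L,w}(x)e^{\mathrm i k\cdot x}\le \frac{1}{1-\widehat w(k)}
\]
(I expect the constant to be $1$, up to the normalisation of $w$).

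\textbf{Step 2: normalisation via edges.} In the bipartite case every vertex is covered by a blue dimer, so $\sum_y w(y)\,\mathbb P(\{0,y\}\text{ occupied})=\sum_y \mathbb P(\ldots)=1$. A local rewiring identity gives $\mathbb P(\{0,y\}\text{ blue})=w^{(L)}(y)\,G^{\mathrm{mdd}}_{L,w}(y)$. Removing the blue dimer $\{0,y\}$ produces exactly a configuration in $\Omega_{0,y}$ with weight divided by $w(y)$. Hence
\[
\sum_y w^{(L)}(y)G_L(y)=1, \qquad\text{i.e.}\qquad \frac1{L^d}\sum_k \widehat w(k)\widehat G_L(k)=1.
\]
Split off $k=0$, which gives $\widehat w(0)\widehat G_L(0)/L^d=M_L$, and write $\widehat G_L(0)=L^dM_L$. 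The difficulty is that $\widehat w(k)$ may be negative, so the infrared bound cannot be inserted term by term.

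\textbf{Step 3: pairing modes.} Since $w$ is supported on odd displacements, $\widehat w(k+\boldsymbol\pi)=-\widehat w(k)$ with $\boldsymbol\pi=(\pi,\dots,\pi)$. Moreover $G_L$ is supported on odd sites (the defect path joins opposite classes), so $\widehat G_L(k+\boldsymbol\pi)=-\widehat G_L(k)$. Hence $\widehat w\widehat G_L$ is $\boldsymbol\pi$-periodic, and the $k=\boldsymbol\pi$ term equals the $k=0$ term. Restricting to half the modes where $\widehat w(k)\ge0$, and pairing each mode with its partner, I would obtain
\[
1\le 2M_L+\frac{2}{L^d}\sum_{k\ne0,\ \widehat w(k)>0}\frac{\widehat w(k)}{1-\widehat w(k)}.
\]
One has $\frac{\widehat w}{1-\widehat w}=\frac1{1-\widehat w}-1$, and symmetrisation under $k\mapsto k+\boldsymbol\pi$ yields $\frac12\bigl(\frac1{1-a}+\frac1{1+a}\bigr)-1=\frac{a^2}{1-a^2}$. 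Using (H), the sum tends to $\frac12\int\bigl(\frac1{1-\widehat w}-1\bigr)$ plus a correction. Carrying out this bookkeeping carefully should turn the condition into $2M_L\ge 1-\tfrac{3}{2}(g_{d,w}-1)\cdot(\text{const})>0$ exactly when $g_{d,w}<4/3$. The precise constant $4/3$ is the step I expect to be the main obstacle: it comes from combining the $k=0$ and $k=\boldsymbol\pi$ terms with the best symmetric use of the infrared bound.

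\textbf{Step 4: loops and pointwise bounds.} For \eqref{eq:macroscopicloops} I would use that in the bipartite model $G^{\mathrm{mdd}}(x)\le C\,\mathbb P(x\in\mathcal L_0)$, or more precisely a switching between $\Omega_{0,x}$ and loop configurations through a neighbour of $x$. Then $\mathbb E|\mathcal L_0|\ge \sum_x\mathbb P(x\in\mathcal L_0)\ge c\sum_x G(x)=cL^dM_L$. For the pointwise bounds \eqref{eq:pointwise-LRO} and \eqref{eq:pointwise-macroscopicloops} along odd $ne_1$, I would use reflection positivity in the $e_1$ direction. This makes $G(ne_1)$, for odd $n$, a positive-definite-type function, so it is monotone or convex in suitable sense; the standard argument of Fr\"ohlich--Simon--Spencer gives $G(ne_1)\ge M_L-O(n/L)$. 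The loop statement would then follow by the same switching comparison as above.
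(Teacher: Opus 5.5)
Your Steps 1--2 and the paired inequality in Step 3,
\[
1\le 2M^{\mathrm{mdd}}_{L,w}+\frac{2}{L^d}\sum_{k\neq0,\ \widehat w(k)>0}\frac{\widehat w(k)}{1-\widehat w(k)},
\]
are exactly the paper's starting point. The gap is in turning this into the threshold $4/3$, which is the whole quantitative content of the theorem. Your symmetrisation is not legitimate. The pair $\{k,k+\boldsymbol\pi\}$ contributes exactly $2\widehat w(k)\widehat G_L(k)$. Bounding this by $2\widehat w(k)^2/(1-\widehat w(k)^2)$ would require $\widehat G_L(k)\le \widehat w(k)/(1-\widehat w(k)^2)$, which is strictly stronger than the infrared bound $1/(1-\widehat w(k))$; it would also give the threshold $g_{d,w}<2$, not $4/3$. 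What the infrared bound actually gives is $2\liminf_L M^{\mathrm{mdd}}_{L,w}\ge 1-I_w$, with $I_w=\int\frac{|\widehat w(k)|}{1-|\widehat w(k)|}\frac{\mathrm dk}{(2\pi)^d}$. To get this limit one uses \eqref{eq:torus-green-convergence} and the identity $\frac{|t|}{1-|t|}=\frac{1}{1-t}+\frac{1}{1+t}-\frac{2+|t|}{1+|t|}$ to control the Riemann sums. The missing idea is to bound $I_w$ by $g_{d,w}$. Bipartiteness gives $g_{d,w}=\int\frac{1}{1-\widehat w^2}\frac{\mathrm dk}{(2\pi)^d}$. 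Writing $\frac{|t|}{1-|t|}=\frac{t^2}{1-t^2}+\frac{|t|}{1-t^2}$ and applying Cauchy--Schwarz to the second term yields $I_w\le g_{d,w}-1+\sqrt{g_{d,w}(g_{d,w}-1)}$. This is $<1$ exactly when $g_{d,w}<4/3$, and the resulting bound on $M^{\mathrm{mdd}}_{L,w}$ is not linear in $g_{d,w}-1$ as you guessed.

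Step 4 relies on a linear comparison $G^{\mathrm{mdd}}_{L,w}(x)\le C\,\mathbb P^{\mathrm{mdd}}_{L,w}(x\in\mathcal L_0)$ that the switching does not provide. The switching of Proposition~\ref{prop:monomerloop-lowerbound} consumes a \emph{pair} of monomer configurations. It yields only $w(a)^2\,G^{\mathrm{mdd}}_{L,w}(x)^2\le\mathbb P^{\mathrm{mdd}}_{L,w}(0\leftrightarrow x)$ for odd $x$, where $a$ is odd with $w(a)>0$. The linear inequality that does hold, \eqref{eq:reverse-switching-bound}, goes the other way. The quadratic bound nevertheless suffices. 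Averaging it with Cauchy--Schwarz, or using \eqref{eq:averaged-switching-consequence}, gives \eqref{eq:macroscopicloops} from \eqref{eq:claim}; evaluated at $x=ne_1$, it gives \eqref{eq:pointwise-macroscopicloops} from \eqref{eq:pointwise-LRO}. For \eqref{eq:pointwise-LRO} itself, the input is the site monotonicity of Proposition~\ref{prop:monotonicity} along odd axis points, not a positive-definiteness property. That monotonicity needs the spins to be reflection invariant, $\Theta S_x=\overline{S_{\Theta x}}$, which the parity-dependent conjugation in the paper's two-angle spins guarantees. It also needs the uniform upper bound of Proposition~\ref{prop:probabilityestimate}. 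Reflection positivity through site planes, which you also claim, is neither provided by Definition~\ref{def:OSpos-quadratic} nor needed.
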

Condition (\ref{eq:condition43}) is only for convenience,
we  refer to  Remark \ref{rem:sharper-LRO-condition}
for a sharper condition.

As we show in the next theorem, our results on long-range order are not limited to lazy-bipartite weights.
Our next theorem establishes long-range order under an assumption which involves the Fourier transform of the weights. 
Set 
\[
a_w:=\max\Bigl\{0,-\min_{k\in[-\pi,\pi]^d}\widehat w(k)\Bigr\}.
\]
For random lattice permutations the sufficient condition for {long-range order} is 
$$
g_{d,w} <
 \frac{2}
     {1+a_w}.
$$
    For the monomer double-dimer model, the condition is slightly {more restrictive},
    since we need to subtract from the right hand side an additional factor which depends on the monomer activity and the total weight at odd sites and which vanishes when the monomer activity is zero
    $$
g_{d,w} <
\frac{2 - q_w}{1+a_w},
    $$  
where
$
q_w:=
\min\left\{1,\frac{e\,w(0)}{b(w)}\right\}
$
and  $b(w) := \sum_{x \in \mathbb{Z}^d : |x|_1 \in 2 \mathbb{N}_0+1} w(x)$.
\begin{thm}[Long-range order beyond {lazy-bipartite} weights]
\label{thm:longrangeorder2}
Let \(w\) be {an admissible weight}.
Then
\begin{align}
\liminf_{\substack{L\to\infty\\L\in2\mathbb N}}
M^{\mathrm{per}}_{L,w}
&\geq
\frac{2}{1+a_w}-g_{d,w},
\label{eq:longrangequantity-per}
\\
\liminf_{\substack{L\to\infty\\L\in2\mathbb N}}
M^{\mathrm{mdd}}_{L,w}
&\geq
\frac{2- q_w}{1+a_w}
-g_{d,w},
\label{eq:longrangequantity-mdd}
\end{align}
where the second statement requires \(b(w)>0\).

Moreover, for either model,  if the corresponding right-hand side above
is positive, then there exists a strictly positive constant $c= c(w, d)$ such that 
\[
G^A_{L,w}({ne_1})\geq  c
\]
for every odd integer \(n\in(0,cL)\) and every sufficiently large even
\(L\), where \(A=\mathrm{per}\) or \(A=\mathrm{mdd}\), respectively.
\end{thm}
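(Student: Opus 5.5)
We will obtain Theorem~\ref{thm:longrangeorder2} from the complex spin representation combined with reflection positivity, in the spirit of \cite{FrohlichSpencer,FrohlichLiebSimon}. The argument has four steps.

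\textbf{Step 1: spin representation.} We write $Z^{\mathrm{per}}_{L,w}$ and $Z^{\mathrm{mdd}}_{L,w}$ as expectations under a complex spin measure on $\T_L$, extending \cite{TaggiWu} to edges of arbitrary range. The aim is for $G^A_{L,w}(x)$ to equal a spin correlation $\langle \phi_0\bar\phi_x\rangle$. Using OS-positivity of $w^{(L)}$ (Definition~\ref{def:OSpos-quadratic}, which is stable under periodisation for even $L$), we check that the spin measure is reflection positive with respect to reflections through planes both through and between sites.

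\textbf{Step 2: infrared bound.} Gaussian domination, applied through the chessboard estimate with a quadratic perturbation $\sum_{x,y}w^{(L)}(x-y)|h_x-h_y|^2$, gives
\[
\widehat G^A_{L,w}(k)\le \frac{1}{1-\widehat w(k)},\qquad k\in\T_L^*\setminus\{0\},
\]
where $\widehat G^A_{L,w}(k)=\sum_x G^A_{L,w}(x)e^{\mathrm i k\cdot x}$.

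\textbf{Step 3: normalisation.} The missing local normalisation is replaced by the jump and edge probabilities. For RLP, every vertex jumps somewhere, so
\[
\sum_y w^{(L)}(y)\,\frac{\mathbb P(\pi(0)=y)}{w^{(L)}(y)}=1 .
\]
Expressing $\mathbb P(\pi(0)=y)/w(y)$ through the two-point function at displacement $y$ then yields the identity
\[
\sum_y w^{(L)}(y)G^{\mathrm{per}}_{L,w}(y)=1 .
\]
For MDD the same identity holds, except that the vertex $0$ may be a monomer. This produces a deficit of at most $w(0)\,\mathbb P(\text{monomer})$, and we bound it through $q_w$ by an entropy comparison with the odd-weight mass $b(w)$.

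In Fourier space the normalisation reads
\[
1\le \frac{1}{L^d}\sum_k \widehat w(k)\widehat G(k).
\]
Split off $k=0$, which contributes $\widehat G(0)/L^d=M_{L,w}$. The main obstacle is that $\widehat w(k)$ can be negative, so the infrared bound cannot be inserted termwise. We plan to handle this with a diagonal shift: write $\widehat w(k)=(\widehat w(k)+a_w)-a_w$. On the first part, which is nonnegative, we use the infrared bound. For the $-a_w$ part we need $\frac{1}{L^d}\sum_k\widehat G(k)=G(0)$. Since $G(0)=0$ in MDD and $G^{\mathrm{per}}(0)$ is controlled, the $-a_w$ part contributes $-a_w(G(0)-M)$, that is, essentially $+a_w M$. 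Combining this with the identity
\[
\frac{\widehat w+a_w}{1-\widehat w}=-1+\frac{1+a_w}{1-\widehat w},
\]
using~\eqref{eq:torus-green-convergence}, and bounding the $\sum_k \widehat G$ term by a similar rearrangement, should give
\[
(1+a_w)\,M\ge 2-(1+a_w)\,g_{d,w}-o(1).
\]
The constant $2$ arises from adding the normalisation identity and the $k$-sum identity. This matches~\eqref{eq:longrangequantity-per}; for MDD the right-hand side becomes $2-q_w$. The precise bookkeeping here is the delicate part.

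\textbf{Step 4: pointwise bounds.} To pass from the Cesàro bound to $G^A_{L,w}(ne_1)\ge c$ for odd $n\in(0,cL)$, we use reflection positivity once more. Reflection positivity through planes between sites shows that odd-distance correlations along $e_1$ are nonincreasing in $n$ in a suitable sense. Together with a decomposition $\widehat G(k)=L^d M\delta_{k,0}+(\text{bounded by }(1-\widehat w)^{-1})$, whose second term has oscillating contributions of order $o(1)$ only for $n\ll L$, this gives the uniform lower bound for $n$ up to $cL$.
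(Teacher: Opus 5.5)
Your Steps 1--3 follow the paper's route for the Ces\`aro bounds. The ingredients are the same: the two-component complex spin representation, reflection positivity for mid-edge reflections obtained from OS-positivity of $w^{(L)}$, Gaussian domination, the infrared bound $\widehat G^A_{L,w}(k)\le(1-\widehat w(k))^{-1}$, the normalisation $\sum_x w^{(L)}(x)G^A_{L,w}(x)=1-p^A_{L,w}$ from jump and edge probabilities, and the diagonal shift by $a_w$. The bookkeeping closes exactly. With $N=L^d$, use $\frac1N\sum_{k\neq0}\widehat G(k)=G(0)-M$, which needs no further rearrangement, together with $\frac{\widehat w+a_w}{1-\widehat w}=\frac{1+a_w}{1-\widehat w}-1$. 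The term $-\frac{N-1}{N}$ then supplies the second unit of the constant $2$.

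Three corrections are needed here. First, for MDD the normalisation is $\frac1N\sum_k\widehat w(k)\widehat G(k)=1-p$ with $p=\mathbb P^{\mathrm{mdd}}_{L,w}(n_0(m^1)=0)$, not ``$1\le\cdots$''. Second, the bound $p\le e\,w^{(L)}(0)/b(w)$ is not an elementary entropy comparison. The paper applies the chessboard estimate to $\prod_z S^1_zS^2_z$, whose expectation is $1/Z^{\mathrm{mdd}}_{L,w}$, and combines it with $Z^{\mathrm{mdd}}_{L,w}\ge(Z^{\mathrm{dim,o}}_{L,w})^2$ and the Egorychev--Falikman bound; the factor $e$ in $q_w$ comes from the latter. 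Third, reflection positivity through site planes does not follow from Definition~\ref{def:OSpos-quadratic}, and you do not need it.

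The genuine gap is Step~4. Your remainder $R(k)=\widehat G(k)-L^dM\delta_{k,0}$ is only bounded \emph{above} by $(1-\widehat w(k))^{-1}$. The infrared bound gives no lower bound, and the complex spin measure gives no positive-definiteness of $G$. In fact $\widehat G$ changes sign. For bipartite admissible weights, which the theorem covers, $\widehat G^{\mathrm{mdd}}_{L,w}(\boldsymbol\pi)=-L^dM^{\mathrm{mdd}}_{L,w}$, whereas $(1-\widehat w(\boldsymbol\pi))^{-1}=1/2$. So $|R|$ is not dominated by the majorant, and this single mode contributes $M^{\mathrm{mdd}}_{L,w}$ to $G(ne_1)$ for odd $n$. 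Consequently the oscillatory sum $\frac1{L^d}\sum_{k\neq0}e^{-\mathrm ik\cdot ne_1}\widehat G(k)$ cannot be controlled this way. Even if $\widehat G\ge0$ were known, the best bound on it would be $\frac1{L^d}\sum_{k\neq0}(1-\widehat w(k))^{-1}\to g_{d,w}\ge1$. That exceeds the lower bound on $M$, so it is nowhere near $o(1)$.

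You correctly invoke monotonicity, but the paper uses it without any Fourier analysis at this stage. Proposition~\ref{prop:monotonicity} requires, as a hypothesis, the uniform bound $G^A_{L,w}\le e/b(w)$ of Proposition~\ref{prop:probabilityestimate}, which again comes from the chessboard estimate plus Egorychev--Falikman. Combined with the even-coordinate estimate of Lees--Taggi, it gives $\frac1N\sum_{x\neq0}G(x)\le\frac2L\sum_{r\ \mathrm{odd}}G(re_1)+\frac{e}{b(w)L}$. The odd axis values are non-increasing in torus distance and bounded by $e/b(w)$, so averaging yields $G(ne_1)\ge C_A-o(1)-2ne/(b(w)L)$, where $C_A$ is the right-hand side of the theorem.

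Before this step you must also remove the diagonal term $G^{\mathrm{per}}_{L,w}(0)/N$ from $M^{\mathrm{per}}_{L,w}$. The paper keeps the term $a_wG(0)$ when $a_w>0$. When $a_w=0$ it uses $w(0)>0$, hence $w^{(L)}(0)G^{\mathrm{per}}_{L,w}(0)\le1$.
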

If  the Fourier transform is non-negative, then the sufficient condition for {long-range order in random lattice permutations} is
$$
g_{d,w}<2.
$$
In this case the numerical condition on the Green's function is weaker than the one required in the bipartite setting. On the other hand, the conclusion is correspondingly weaker: the theorem yields uniform positivity of the two-point function, but does not establish the occurrence of macroscopic loops.

\subsection{Examples}\label{sect:examples}
We now illustrate our general results through several natural choices of
weights. Depending on the example, our criteria yield long-range order,
macroscopic loops, or both. A common feature of these examples is that
our sufficient conditions are satisfied when the weights are sufficiently
spread out, either because the dimension is large enough or by tuning
parameters that control their spatial range.

For several families of
weights, in the RLP case our long-range-order criterion applies for every
fixed-point weight $\rho<1/2$, arbitrarily close to the sharp threshold
$1/2$ of Theorem~\ref{thm:decay-large-monomer}.

The nearest-neighbour case has been addressed in 
\cite{QuitmannTaggi2,T} in dimensions $d\geq3$.
We first consider weights supported on first and second neighbours.
These weights are non-bipartite, so
Theorem~\ref{thm:longrangeorder2} provides the appropriate
long-range-order criterion. 

\begin{cor}[First- and second-nearest neighbours]
\label{cor:firstsecondneighbours}
Let $d\geq3$ and consider the weights in Example~\emph{(B)} with
\begin{equation}\label{eq:choiceB}
\beta=\frac{1-\rho}{3d},
\qquad
\gamma=\frac{1-\rho}{6d(d-1)},
\end{equation}
and $\rho\in[0,1]$. There exists $\rho_0>0$ such that, for every
$\rho\in[0,\rho_0)$, these weights are admissible and the right-hand
sides of \eqref{eq:longrangequantity-per} and
\eqref{eq:longrangequantity-mdd} are both strictly positive.
Consequently, Theorem~\ref{thm:longrangeorder2} implies long-range
order for the monomer double-dimer model and random lattice
permutations with these weights.
\end{cor}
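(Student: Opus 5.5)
The plan is to reduce everything to the case $\rho=0$ by exploiting the affine dependence of the weights on $\rho$, and then to verify admissibility and the two strict inequalities at $\rho=0$. Write $w_\rho$ for the weights of Example~(B) with the choice \eqref{eq:choiceB}, and $w_0$ for the case $\rho=0$.

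\textbf{Admissibility.} Normalisation holds because $\rho+2d\beta+2d(d-1)\gamma=\rho+\tfrac23(1-\rho)+\tfrac13(1-\rho)=1$. The constraint \eqref{eq:secondnearestneighbourvalues} holds with equality, since $\beta/(2(d-1))=(1-\rho)/(6d(d-1))=\gamma$. Hence OS-positivity follows from the OS-positivity of Example~(B), which is established in Section~\ref{sect:proofOSpositivity}. Symmetry, translation invariance and irreducibility are immediate, since the support contains $\pm e_i$.

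For \eqref{eq:torus-green-convergence}, the weights have finite range and $d\ge3$. Near $k=0$ we have $1-\widehat w(k)\ge c|k|^2$, which is integrable. Away from the origin, $1-\widehat w$ is bounded below, because irreducibility together with the presence of the nearest-neighbour steps rules out $\widehat w(k)=1$ for $k\neq0$. The Riemann sums therefore converge to the integral. To see this, I would split off a ball of radius $\varepsilon$ around $0$: on the complement the integrand is smooth and bounded, and inside the ball the lattice sum is dominated by $L^{-d}\sum_{0<|k|<\varepsilon}C|k|^{-2}$, which is small uniformly in $L$.

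\textbf{Reduction to $\rho=0$.} We have $\widehat w_\rho=\rho+(1-\rho)\widehat w_0$, so $1-\widehat w_\rho=(1-\rho)(1-\widehat w_0)$ and hence $g_{d,w_\rho}=g_{d,w_0}/(1-\rho)$. Similarly $\min\widehat w_\rho=\rho+(1-\rho)\min\widehat w_0$, so $a_{w_\rho}\le a_{w_0}$. Finally $b(w_\rho)=\tfrac23(1-\rho)$, so $q_{w_\rho}=\min\{1,3e\rho/(2(1-\rho))\}\to0$ as $\rho\to0$. Both right-hand sides are thus continuous in $\rho$ at $0$, and it suffices to prove
\[
g_{d,w_0}<\frac{2}{1+a_{w_0}}\qquad\text{for every } d\ge3 .
\]
Once this strict inequality holds, a single $\rho_0>0$ works by continuity, provided the inequality holds with a margin that is uniform in $d$. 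The large-$d$ estimate below supplies exactly this uniformity.

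\textbf{Bounding $a_{w_0}$.} Write $c_i=\cos k_i$ and $S=\sum_i c_i$. Then
\[
\widehat w_0(k)=\frac{2}{3d}S+\frac{1}{3d(d-1)}\Bigl(S^2-\sum_i c_i^2\Bigr)\ge \frac{2S}{3d}+\frac{S^2-d}{3d(d-1)}.
\]
Minimising the right-hand side over $S\in[-d,d]$ gives the minimiser $S=-(d-1)$. At $d=3$ this yields $a_{w_0}\le 7/18$, and for general $d$ the bound is at most about $1/3+O(1/d)$. The target therefore becomes $g_{d,w_0}<2/(1+a)$, which is roughly $1.44$ for $d=3$ and roughly $3/2$ for large $d$.

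\textbf{Bounding $g_{d,w_0}$: the main obstacle.} This is the step I expect to be hardest. For large $d$ I would expand $g=\sum_n\mathbb P(X_n=0)$ for the walk with kernel $w_0$. We have $\mathbb P(X_1=0)=0$, and the return probabilities satisfy $\mathbb P(X_n=0)=O(d^{-1})$ for $n\ge2$, summably. This comes from the standard random-walk return bounds, for example the $n=2$ term equals $\sum_x w_0(x)^2=O(1/d)$. It gives $g_{d,w_0}\le1+C/d<3/2-\delta$ for all $d\ge d_0$. For the finitely many remaining dimensions $3\le d<d_0$, and in particular $d=3$ where the margin is tightest, I would bound the integral $\int(1-\widehat w_0)^{-1}\,dk/(2\pi)^d$ rigorously. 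The first attempt would be the comparison $1-\widehat w_0\ge\lambda\,(1-\widehat w_{\mathrm{SRW}})$ with an explicit $\lambda$, combined with known values of the simple-random-walk Green function. If that comparison is too lossy, I would fall back on a validated numerical integration, using an analytic treatment of the $|k|^{-2}$ singularity near $0$ and interval bounds on the remainder.
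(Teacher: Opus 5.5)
Your reduction to $\rho=0$ and your admissibility checks are correct and match the paper: the formulas for $g_{d,w_\rho}$, $b(w_\rho)$ and $q_{w_\rho}$, and the bound $a_{w_\rho}\le a_{w_0}$, are all right. Also, $\rho_0$ may depend on $d$, so you need no uniformity in $d$.

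The gap is the central estimate $g_{d,w_0}<2/(1+a_{w_0})$, which you do not prove.

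\textbf{Your bound on $a_{w_0}$ is lossy.} In fact $a_{w_0}=1/3$ in every dimension, since
\[
\widehat w_0(k)+\tfrac13=\frac{2}{3d(d-1)}\sum_{i<j}(1+\cos k_i)(1+\cos k_j)\ge0,
\]
with equality at $\boldsymbol\pi$. Using $7/18$ lowers the target at $d=3$ from $3/2$ to $36/25$. Even the natural analytic bound described below ($\approx1.46$) then no longer closes the argument.

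\textbf{The proposed comparison cannot work.} At $k=\boldsymbol\pi$ one has $1-\widehat w_0=4/3$ while $1-\widehat w_{\mathrm{SRW}}=2$. Hence $1-\widehat w_0\ge\lambda(1-\widehat w_{\mathrm{SRW}})$ forces $\lambda\le 2/3$. This gives only $g_{d,w_0}\le\tfrac32 g_{d,\mathrm{SRW}}>\tfrac32$, in every dimension.

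\textbf{What remains is open.} You are left with a validated numerical integration at $d=3$ and on an unspecified range $3\le d<d_0$. Here $d_0$ comes from a large-$d$ bound $\sum_{n\ge2}\mathbb P(X_n=0)\le C/d$ with $C$ uniform in $d$. That bound needs uniform control of return probabilities for large $n$, and you assert it rather than prove it. This is the heart of the corollary.

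\textbf{How the paper does it.} Write $p_d$ for your $w_0$ in dimension $d$. The paper uses two convexity arguments, which make both the large-$d$ analysis and any numerics beyond classical constants unnecessary.
\begin{enumerate}
\item[(1)] Monotonicity in $d$. $\widehat p_d(k)$ is the average over $m$ of $\widehat p_{d-1}$ evaluated at $k$ with its $m$-th coordinate deleted. Each $\cos k_i$ survives $d-1$ of the $d$ deletions, and each $\cos k_i\cos k_j$ survives $d-2$. Jensen for $t\mapsto(1+\varepsilon-t)^{-1}$, followed by $\varepsilon\downarrow0$, gives $g_{d,p_d}\le g_{d-1,p_{d-1}}$. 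So it suffices to treat $d=3$.
\item[(2)] The case $d=3$. Write $\widehat p_3=\tfrac23\cdot\tfrac13\sum_i\cos k_i+\tfrac13\cdot\tfrac13\sum_{i<j}\cos k_i\cos k_j$. Jensen then bounds $g_{3,p_3}$ by $\tfrac23$ of the simple-cubic Watson integral plus $\tfrac13$ of the face-centred-cubic one, i.e.\ by $\tfrac23\cdot1.517+\tfrac13\cdot1.345<\tfrac32$.
\end{enumerate}
Together with the exact value $a_{w_0}=1/3$, this gives $\tfrac32-g_{d,p_d}>0$ for every $d\ge3$. Continuity in $\rho$ then finishes the proof.
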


This family of parameters includes the case of zero diagonal activity.
In that case, the two-point function of the monomer double-dimer model
coincides with the monomer correlation of the dimer model. Hence the
corollary proves that monomer correlations are uniformly positive on
$\mathbb Z^d$ with edges connecting first- and second-nearest-neighbour
vertices in every dimension $d\geq3$.

We next turn to weights of infinite range, starting with exponentially
decaying weights. Theorem~\ref{thm:longrangeorder2} yields long-range
order for the original weights, while their bipartite projection falls
within the scope of Theorem~\ref{thm:longrangeorder}, which additionally
yields macroscopic loops.

\begin{cor}[Exponentially decaying weights]
Suppose that $d\geq3$ and consider the weights $w$ in Example~\emph{(C)}.
There exists $\rho_0>0$ such that, for every
$\rho\in[0,\rho_0)$, there exists $\gamma_\rho>0$ such that, if
$\gamma\in(0,\gamma_\rho)$, then the right-hand sides of
\eqref{eq:longrangequantity-per} and
\eqref{eq:longrangequantity-mdd} are both strictly positive.
Consequently, Theorem~\ref{thm:longrangeorder2} implies long-range
order for RLP and MDD.

Moreover, in the RLP case one can take $\rho_0=1/2$; hence the
conclusion holds for every $\rho<1/2$, arbitrarily close to the
threshold $1/2$.

Finally, there exists $\gamma'>0$ such that, if
$\gamma\in(0,\gamma')$, then the bipartite projection of the weights
satisfies \eqref{eq:condition43}. Hence
Theorem~\ref{thm:longrangeorder} implies long-range order and
macroscopic loops for these models with bipartite exponentially
decaying weights.
\end{cor}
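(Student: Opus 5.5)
The plan is to compute the asymptotics of $g_{d,w}$, $a_w$, $b(w)$ and $q_w$ as $\gamma\downarrow0$ and insert them into the right-hand sides of \eqref{eq:longrangequantity-per}, \eqref{eq:longrangequantity-mdd} and into \eqref{eq:condition43}. Admissibility of Example~(C) and of its bipartite projection is taken from the list of examples, using (F) for the projection; the proof of that is deferred to Section~\ref{sect:proofOSpositivity}. So everything reduces to Fourier estimates. Write $w_{\rho,\gamma}=\rho\,\delta_0+(1-\rho)p_\gamma$, where $p_\gamma(x)=e^{-\gamma|x|_1}\mathbf 1_{x\neq0}/(S_\gamma-1)$. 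The one-dimensional sum
$\phi_\gamma(t):=\sum_{n\in\Z}e^{-\gamma|n|}e^{\mathrm i tn}=\frac{1-e^{-2\gamma}}{1-2e^{-\gamma}\cos t+e^{-2\gamma}}\ge0$
gives $S_\gamma:=\phi_\gamma(0)^d=\coth(\gamma/2)^d$. Since $e^{-\gamma|x|_1}$ factorises over coordinates, we get
\[
\widehat p_\gamma(k)=\frac{\prod_{i}\phi_\gamma(k_i)-1}{S_\gamma-1}=\frac{\Psi_\gamma(k)-S_\gamma^{-1}}{1-S_\gamma^{-1}},\qquad \Psi_\gamma(k):=\prod_{i=1}^d\psi_\gamma(k_i),\quad \psi_\gamma:=\phi_\gamma/\phi_\gamma(0)\in[0,1].
\]

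\emph{Step 1 ($a_w$, $b(w)$, $q_w$).} Since $\Psi_\gamma\ge0$, we have $\widehat p_\gamma\ge -1/(S_\gamma-1)\to0$, so $\min\widehat w\ge\rho-(1-\rho)/(S_\gamma-1)$ and hence $a_w\to0$ as $\gamma\to0$. Next, $b(w)=(1-\rho)\tfrac12\bigl(1-\widehat p_\gamma(\pi\mathbf 1)\bigr)$ with $\widehat p_\gamma(\pi\mathbf 1)\to0$, so $b(w)\to(1-\rho)/2$. Consequently $q_w\to\min\{1,2e\rho/(1-\rho)\}$, which tends to $0$ as $\rho\to0$.

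\emph{Step 2 ($g_{d,w}\to 1/(1-\rho)$; the main step).} We have $1-\widehat w=(1-\rho)(1-\widehat p_\gamma)$ and $1-\widehat p_\gamma=(1-\Psi_\gamma)/(1-S_\gamma^{-1})$. It therefore suffices to show $\int_{(-\pi,\pi]^d}(1-\Psi_\gamma(k))^{-1}\,\frac{\mathrm dk}{(2\pi)^d}\to1$. Two inputs are needed.
\begin{itemize}
\item Pointwise: $\psi_\gamma(t)\to0$ for every $t\neq0$, so $\Psi_\gamma(k)\to0$ for $k\neq0$.
\item Uniform lower bound: $1-\psi_\gamma(t)\ge c\min\{1,t^2/\gamma^2\}$. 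This follows from the explicit formula $1-\psi_\gamma(t)=\frac{2e^{-\gamma}(1-\cos t)}{1-2e^{-\gamma}\cos t+e^{-2\gamma}}$. Using $1-\prod\psi_i\ge\max_i(1-\psi_i)$, it yields $1-\Psi_\gamma(k)\ge c\min\{1,|k|^2/(d\gamma^2)\}$.
\end{itemize}
On the ball $\{|k|\le\gamma\}$ the integrand is at most $C\gamma^2/|k|^2$, whose integral is $O(\gamma^d)\to0$ because $d\ge3$ (this is where $d\ge3$ enters). On the complement the integrand is bounded uniformly by a constant, and dominated convergence gives the limit $1$. Hence $g_{d,w}\to1/(1-\rho)$.

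\emph{Step 3 (conclusions).} For RLP, the right-hand side of \eqref{eq:longrangequantity-per} tends to $2-\frac1{1-\rho}$, which is positive exactly when $\rho<1/2$; this gives $\rho_0=1/2$ and, for each such $\rho$, some $\gamma_\rho$. For MDD the limit is $2-\min\{1,2e\rho/(1-\rho)\}-\frac1{1-\rho}$. This is positive for small $\rho$, which gives some $\rho_0>0$; taking the minimum of the two thresholds handles both models simultaneously. For the bipartite projection the parameter $\rho$ drops out, and
\[
\widehat{w^{\mathrm o}}(k)=\frac{\widehat p_\gamma(k)-\widehat p_\gamma(k+\pi\mathbf 1)}{1-\widehat p_\gamma(\pi\mathbf 1)}.
\]
Near $k=\pi\mathbf 1$ this is close to $-1$, so $1-\widehat{w^{\mathrm o}}\approx2$ and there is no singularity. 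Near $k=0$, the subtracted term $\widehat p_\gamma(k+\pi\mathbf 1)=O(\gamma^d)$ is uniformly small for $|k|\le1$, say, because $\Psi_\gamma$ decays like $\gamma^2$ away from the origin in each coordinate. So the same splitting as in Step 2 shows $g_{d,w^{\mathrm o}}\to1<4/3$, and Theorem~\ref{thm:longrangeorder} applies for $\gamma<\gamma'$.

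The only delicate point is the uniform control in Step 2 near the singular points $k=0$ (and $k=\pi\mathbf 1$ for the projection). The explicit one-dimensional formula for $\psi_\gamma$ should make it routine.
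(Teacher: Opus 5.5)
Your Steps 1--2 and the RLP/MDD part of Step 3 follow the paper's argument essentially verbatim:
\begin{itemize}
\item the Poisson-kernel formula for $\widehat w$;
\item $\Psi_\gamma\ge0$, giving $a_w\to0$ and $\widehat w(\pi\mathbf 1)\to0$;
\item the bound $1-\psi_\gamma(t)\ge c\min\{1,t^2/\gamma^2\}$ with dominated convergence in $d\ge3$, giving $g_{d,w}\to1/(1-\rho)$;
\item the limits $(1-2\rho)/(1-\rho)$ and $2-\min\{1,2e\rho/(1-\rho)\}-1/(1-\rho)$.
\end{itemize}
Those parts are fine.

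The gap is in the bipartite projection near $k=0$. To reuse Step 2 you need $1-\widehat{w^{\mathrm o}}(k)\ge c\min\{1,|k|^2/\gamma^2\}$ uniformly in $\gamma$. Your formula gives
\[
1-\widehat{w^{\mathrm o}}(k)=\frac{\bigl(1-\widehat p_\gamma(k)\bigr)+\bigl(\widehat p_\gamma(k+\pi\mathbf 1)-\widehat p_\gamma(\pi\mathbf 1)\bigr)}{1-\widehat p_\gamma(\pi\mathbf 1)}.
\]
Knowing only that the correction term is $O(\gamma^d)$ in absolute value tells you nothing where $1-\widehat p_\gamma(k)\lesssim\gamma^d$, that is, for $|k|\lesssim\gamma^{1+d/2}$. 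If the correction were negative there, your lower bound would fail.

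This region is not harmless for free. The crude bound $1-\widehat{w^{\mathrm o}}(k)\ge c\gamma^d|k|^2$, coming from the nearest-neighbour terms, only bounds its contribution by $O(\gamma^{-1/2})$ when $d=3$. Your remark near $\pi\mathbf 1$ is also imprecise: $\widehat{w^{\mathrm o}}$ is close to $-1$ only within distance $O(\gamma)$ of $\pi\mathbf 1$, not on a fixed neighbourhood.

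The missing ingredient is a sign. The function $\psi_\gamma(t)=(1-e^{-\gamma})^2/(1-2e^{-\gamma}\cos t+e^{-2\gamma})$ is minimised at $t=\pi$, so $\Psi_\gamma(k+\pi\mathbf 1)\ge\Psi_\gamma(\pi\mathbf 1)$. Hence the correction is non-negative, and
\[
1-\widehat{w^{\mathrm o}}(k)\ \ge\ \frac{1-\Psi_\gamma(k)}{1-\Psi_\gamma(\pi\mathbf 1)}\ \ge\ 1-\Psi_\gamma(k)\qquad\text{for all }k.
\]
With this, the majorant from Step 2 applies verbatim on the whole torus, and no separate discussion of $\pi\mathbf 1$ is needed. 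Since $\widehat{w^{\mathrm o}}\to0$ off $\{0,\pi\mathbf 1\}$ and $\widehat{w^{\mathrm o}}(\pi\mathbf 1)=-1$, the integrand tends to $1$ almost everywhere. Dominated convergence then gives $g_{d,w^{\mathrm o}}\to1<4/3$.

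This is exactly the role of the paper's inequality \eqref{eq:parity-Phi}, $\Phi_{1,\gamma}(k+\boldsymbol\pi)\ge\Phi_{1,\gamma}(\boldsymbol\pi)\Phi_{1,\gamma}(k)$. It yields $\widehat{w^{\mathrm o}}\le\Phi_{1,\gamma}$ (your $\Psi_\gamma$), and hence the same integrable majorant $C(1+r^{-2})$ for $w^{\mathrm o}$ as for $w$.
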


The preceding examples concern weights with finite range or
exponentially decaying tails, for which our general Green-function criteria apply only in
dimensions $d\geq3$. The fully packed one-dimensional case is exceptional by Theorem~\ref{thm:zero-activity-one-dimensional}. Slowly decaying tails lead to a different picture:
sufficiently spread-out weights may produce long-range order and
macroscopic loops even in dimensions one and two.

{For brevity, we state the corollary only for Example~\emph{(E)}; analogous conclusions for Example~\emph{(D)} follow from the estimates in Lemma~\ref{lem:examples-C-E-fourier}.}

\begin{cor}[Regularised polynomially decaying weights]
\label{cor:regularised-polynomial-LRO}
Suppose that $d\geq1$ and consider the weights $w$ in
Example~\emph{(E)}. There exists $\rho_0\in(0,1/2]$ such that the
following holds. Suppose that
{
\begin{equation}\label{eq:svaluesoptimal}
\begin{cases}
d<s<2d, & d\in\{1,2\},\\
s>d,    & d\geq3.
\end{cases}
\end{equation}}
Then, for every $\rho\in[0,\rho_0)$, there exist
$\gamma_0=\gamma_0(d,s,\rho)>0$ and $c>0$ such that, for every
$0<\gamma<\gamma_0$, the right-hand sides of
\eqref{eq:longrangequantity-per} and
\eqref{eq:longrangequantity-mdd} are both strictly positive.
Consequently, Theorem~\ref{thm:longrangeorder2} implies long-range
order for MDD and RLP.

Moreover, in the RLP case one can take $\rho_0=1/2$; hence the
conclusion holds for every $\rho<1/2$, arbitrarily close to the
threshold $1/2$.

Finally, for every $s$ satisfying \eqref{eq:svaluesoptimal}, one can
choose $\gamma>0$ sufficiently small so that the bipartite projection
of the weights satisfies \eqref{eq:condition43}. Consequently,
Theorem~\ref{thm:longrangeorder} implies long-range order and the
existence of macroscopic loops for both models.
\end{cor}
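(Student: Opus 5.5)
The corollary reduces to Theorems~\ref{thm:longrangeorder2} and \ref{thm:longrangeorder}, so the whole task is to verify their hypotheses for the weights of Example~(E). Admissibility (OS-positivity, and the torus Green-function convergence \eqref{eq:torus-green-convergence}) is taken from Section~\ref{sect:proofOSpositivity} together with Lemma~\ref{lem:examples-C-E-fourier}. The bipartite projection is OS-positive by (F), and irreducibility holds because $w(x)>0$ for every $x$. What remains is quantitative: as $\gamma\to0$ we must control $g_{d,w}$, $a_w$ and $q_w$.

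\textbf{Non-diagonal part.} Write $w=\rho\delta_0+(1-\rho)v_\gamma$, where $v_\gamma(x)\propto(1+\gamma|x|_1)^{-s}$ for $x\neq0$.

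The first claim is that $\widehat v_\gamma(k)\to0$ for every $k\neq0$, and more quantitatively that $1-\widehat v_\gamma(k)\ge c\min\{1,(|k|/\gamma)^{s-d}\}$ for $s<d+2$, with a $|k|^2/\gamma^2$-type bound otherwise. The reason is that $v_\gamma$ is a discretisation of the rescaled profile $\gamma^{d}(1+|\gamma x|_1)^{-s}$. Its Fourier transform is close to $\widehat\phi(k/\gamma)$, where $\phi(y)=(1+|y|_1)^{-s}/\|\cdot\|_1$ is an integrable function whose transform decays and behaves like $1-c|\xi|^{s-d}$ near the origin. Since $Z_{s,\gamma}\asymp\gamma^{-d}$, the weight of $v_\gamma$ on any bounded set vanishes.

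I expect these estimates to be exactly the content of Lemma~\ref{lem:examples-C-E-fourier}, and I will cite it.

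\textbf{Green function and $a_w$.} Split the integral defining $g_{d,w}$ into the regions $|k|<K\gamma$ and $|k|\ge K\gamma$.

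On $|k|\ge K\gamma$ we have $\widehat w(k)=\rho+(1-\rho)\widehat v_\gamma(k)$, with $\widehat v_\gamma(k)=o(1)$ uniformly as $K\to\infty$. The integrand is therefore $(1-\rho)^{-1}(1+o(1))$.

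On $|k|<K\gamma$, the integrand is at most $C(1-\rho)^{-1}(\gamma/|k|)^{s-d}$. The region has volume of order $\gamma^d$, so its contribution is at most
\[
C\gamma^{d}\int_{|u|<K}|u|^{-(s-d)}\,\mathrm du .
\]
This is finite and tends to $0$ provided $s-d<d$. That is exactly the restriction $s<2d$ in \eqref{eq:svaluesoptimal} for $d\in\{1,2\}$. For $d\ge3$ the bound $s-d<\min\{2,d\}$ holds automatically, using the $|k|^2$ bound when $s\ge d+2$.

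Hence $g_{d,w}\to(1-\rho)^{-1}$ as $\gamma\to0$. In the same way $\min_k\widehat w(k)\ge\rho-o(1)$, so $a_w\to0$.

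\textbf{Checking the thresholds.}

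For RLP, the condition $g_{d,w}<2/(1+a_w)$ becomes, in the limit, $(1-\rho)^{-1}<2$, that is $\rho<1/2$. This allows $\rho_0=1/2$.

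For MDD, $b(w)\to(1-\rho)/2$, because odd and even sites carry asymptotically equal mass. So $q_w\to\min\{1,2e\rho/(1-\rho)\}$, and we need $(1-\rho)^{-1}<2-q_w$. This holds for all $\rho$ below some $\rho_0\in(0,1/2]$, since at $\rho=0$ the left side is $1$ and the right side is $2$.

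For the bipartite statement, the projection has Fourier transform $(\widehat v_\gamma(k)-\widehat v_\gamma(k+\pi\mathbf 1))/(2b)$. It equals $1$ at $k=0$, equals $-1$ at $k=\pi\mathbf1$, and is $o(1)$ elsewhere. The same two-scale splitting, now around both singular points, gives $g_{d,w^{\mathrm o}}\to1<4/3$. Therefore Theorem~\ref{thm:longrangeorder} applies for small $\gamma$.

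\textbf{Main obstacle.} The hard step is the uniform lower bound on $1-\widehat v_\gamma(k)$ near the small-$|k|$ scale $\gamma$, which is the Riemann-sum comparison with the continuum transform. The $o(1)$ statements at larger $|k|$ are softer; there I would use summation by parts, exploiting the monotonicity of $(1+\gamma r)^{-s}$ along each coordinate.
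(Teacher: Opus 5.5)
Your proposal is correct and follows essentially the paper's route. The paper likewise reduces everything to the limits $g_{d,w_{0,s,\gamma}}\to1$, $\min_k\widehat w_{0,s,\gamma}(k)\to0$, $\widehat w_{0,s,\gamma}(\boldsymbol\pi)\to0$ and $g_{d,w^{\mathrm o}}\to1$ from Lemma~\ref{lem:examples-C-E-fourier}. There they are proved by the exact Laplace representation $F_\gamma(k)=\Gamma(s)^{-1}\int_0^\infty u^{s-1}e^{-u}R_{1,\gamma u}(k)\,\mathrm du$ and dominated convergence, not by a Riemann-sum comparison. The paper then checks the same limiting conditions $(1-\rho)^{-1}<2$ for RLP and $(1-\rho)^{-1}<2-\min\{1,2e\rho/(1-\rho)\}$ for MDD.

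One cosmetic slip: in $g_{d,w^{\mathrm o}}=(2\pi)^{-d}\int(1-\widehat{w^{\mathrm o}})^{-1}$ the point $\boldsymbol\pi$ is not singular, since $1-\widehat{w^{\mathrm o}}(\boldsymbol\pi)=2$. So only the neighbourhood of $k=0$ needs your two-scale treatment.
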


\subsection{Diagonal activity greater than \texorpdfstring{$1/2$}{1/2}}
\label{sect:large-diagonal-activity}
{Our next theorem gives a complementary obstruction based on the diagonal activity.}
More precisely,  it establishes that, if $w(0) > \frac{1}{2}$ (and  hence $G^{\mathrm{RW}}_w(0) > 2$) then both the two-point function and the probability that two vertices belong to the same loop decay exponentially or polynomially with their distance, depending on the tail of the weights. 
The proof is elementary and does not rely on reflection positivity.  In fact, the theorem applies to non-negative weights in any dimension; we assume translation invariance and torus structure only for simplicity.   
\begin{thm}[Absence of long-range order  for large diagonal activity]
\label{thm:decay-large-monomer}
Let {\(w:\mathbb Z^d\to[0,\infty)\)} be symmetric, translation invariant and normalised to one. 
Suppose that 
$$ 
w(0)>\frac12.
$$
Write $|x|_1$ for the $\ell^1$ torus distance from $x$ to the origin.
{For every $A\in\{\mathrm{mdd},\mathrm{per}\}$ and either}
\[
{F^A_{L,w}(x)=G^A_{L,w}(x)}
\qquad\text{or}\qquad
{F^A_{L,w}(x)=\mathbb P^A_{L,w}(x\in\mathcal L_0)},
\]
we have
\begin{equation}\label{eq:qualitative-decay}
\lim_{R\to\infty}
\sup_L
\sum_{\substack{x\in\mathbb T_L\\ |x|_1\geq R}}
{F^A_{L,w}(x)}
=0.
\end{equation}
Moreover, if for some \(p>0\),
$
\sum_{x\in\mathbb Z^d}|x|_1^p w(x)<\infty,
$
then there exists \(C_p<\infty\) such that, uniformly in \(L\),
\[
{F^A_{L,w}(x)}
\leq
\frac{C_p}{(1+|x|_1)^p}.
\]
Furthermore, if for some \(\eta>0\),
$
\sum_{x\in\mathbb Z^d}e^{\eta|x|_1}w(x)<\infty,
$
then there exist \(C,c>0\) such that, uniformly in \(L\),
\[
{F^A_{L,w}(x)} \leq Ce^{-c|x|_1}.
\]
\end{thm}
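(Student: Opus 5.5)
The plan is a direct energy--entropy (Peierls-type) argument. Remove the path, or the loop, that connects $0$ to $x$, and replace every vertex it visits by a fixed point (RLP) or a monomer (MDD). Each removed step of displacement $z\neq0$ costs a factor $w^{(L)}(z)$, and each freed vertex gains a factor $w(0)$. Since $w(0)>1/2$, the ratio $(1-w(0))/w(0)$ is $<1$, and the resulting sum over paths converges like a killed random walk.

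\textbf{Step 1: two-point function.} Fix $A=\mathrm{per}$ and $\pi\in\Omega^{\mathrm{per}}_{x,y}$. Following $\pi$ from $x$ gives a self-avoiding path $x=v_0,v_1,\dots,v_n=y$, and $\pi$ restricted to the remaining vertices is a bijection of $V\setminus\{v_0,\dots,v_n\}$. Define $\Phi(\pi)\in\Omega^{\mathrm{per}}$ by declaring $v_0,\dots,v_n$ fixed points. The weight changes by the factor $w(0)^{n+1}/\prod_{i}w^{(L)}(v_{i+1}-v_i)$. The pair $(\Phi(\pi),(v_i))$ determines $\pi$, so summing over paths gives
\[
G^{\mathrm{per}}_{L,w}(x,y)\le \frac1{w(0)}\sum_{n\ge1}u_L^{*n}(y-x),
\qquad u(z):=\frac{w(z)\mathbf 1_{z\neq0}}{w(0)},
\]
where $u_L$ is the periodisation of $u$. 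Here $\sum_z u(z)=r:=(1-w(0))/w(0)<1$.

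For MDD the same map works on $\Omega_{x,y}$. The superposition contains a unique alternating path from $x$ to $y$ with $n$ dimers and $n+1$ vertices. Deleting its dimers and putting monomers at $x,y$ and at the $n-1$ interior vertices gives an element of $\Omega$, and the weight changes by exactly the same factor. So the same bound holds.

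\textbf{Step 2: loop connectivity.} On $\{x\in\mathcal L_0\}$ with $x\neq0$, the loop is a cycle $0=v_0,\dots,v_k=x,\dots,v_n=0$. Erasing it and placing $n$ fixed points (or $n$ monomers; a doubled edge counts as two dimers on a loop of two vertices) changes the weight by $w(0)^n/\prod w^{(L)}(\text{steps})$. Choosing an orientation overcounts by at most a factor $2$, which is harmless. Splitting the cycle at $x$ then gives
\[
\mathbb P^A_{L,w}(x\in\mathcal L_0)\le 2\Bigl(\sum_{n\ge1}u_L^{*n}(x)\Bigr)^2\le \frac{2r}{1-r}\sum_{n\ge1}u_L^{*n}(x),
\]
where the second inequality uses $u_L^{*n}(x)\le r^n$. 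Hence both choices of $F^A_{L,w}$ are dominated by $C\,\Gamma_L(x)$, where $\Gamma_L(x):=\sum_{n\ge1}u_L^{*n}(x)$.

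\textbf{Step 3: decay estimates, uniform in $L$.} Write $u_L^{*n}=r^n\,\mathbb P(S_n\equiv x \bmod L)$, where $S$ is a random walk on $\mathbb Z^d$ with step law $u/r$. The torus $\ell^1$ distance is at most $|S_n|_1$, so
\[
\sum_{|x|_1\ge R}\Gamma_L(x)\le\sum_{n\ge1} r^n\,\mathbb P(|S_n|_1\ge R),
\]
and the right-hand side does not depend on $L$. Dominated convergence, using $\sum_n r^n<\infty$, gives \eqref{eq:qualitative-decay}.

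Under a $p$-th moment assumption, $\mathbb E|S_n|_1^p\le n^{\max(p,1)}\,\mathbb E|X_1|_1^p$ by convexity or subadditivity. Markov's inequality then gives $\Gamma_L(x)\le |x|_1^{-p}\sum_n r^n n^{\max(p,1)}C$. For small $|x|_1$ one uses the trivial bound $\Gamma_L\le r/(1-r)$.

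Under the exponential moment assumption, $\phi(\eta'):=\mathbb E e^{\eta'|X_1|_1}\to1$ as $\eta'\downarrow0$. Pick $\eta'\le\eta$ with $r\phi(\eta')<1$. Then $\Gamma_L(x)\le e^{-\eta'|x|_1}\sum_n (r\phi(\eta'))^n$.

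\textbf{Main obstacle.} The one delicate point is the injectivity and bookkeeping in Step 1 for MDD. I must check three things: the constraint at $x,y$ becomes the monomer constraint; the path is uniquely recoverable from its vertex sequence together with the colour alternation; and no factor $w(0)$ for $x,y$ is lost, since these are excluded from the product in $Z^{\mathrm{mdd}}_{L,w}(x,y)$. All of this holds, giving exactly $w(0)^{n+1}$. The rest is routine.
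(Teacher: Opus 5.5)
Your proposal is correct and follows essentially the same route as the paper: erase the path or loop through $0$, replace its vertices by fixed points or monomers, and dominate $F^A_{L,w}$ by the Green function of a killed walk with total step mass $(1-w(0))/w(0)<1$. Tail, polynomial-moment and exponential-moment bounds then transfer uniformly in $L$. The differences are cosmetic. You lift to a walk on $\mathbb Z^d$ and use $|x|_1\le|S_n|_1$, where the paper uses a periodised tail bound and a pigeonhole step. Your factor $2$ in the loop bound is unnecessary, since orienting by the blue edge out of $0$ already makes the map injective. As in the paper, the point $x=0$ (where $G^{\mathrm{per}}_{L,w}(0)\le w(0)^{-1}$ and $\mathbb P^A_{L,w}(0\in\mathcal L_0)=1$) must be absorbed by enlarging the constants.
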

For  RLP,  the value \(1/2\) of the diagonal activity is sharp in the following sense. 
By
Theorem~\ref{thm:decay-large-monomer}, every weight function covered by
the theorem with diagonal activity strictly greater than \(1/2\) gives
rise to spatial random-permutation and monomer double-dimer models
\textit{without long-range order.} Conversely, for every prescribed fixed-point weight
strictly smaller than \(1/2\), the examples in
Section~\ref{sect:examples} provide weights for which \textit{long-range order
does occur} in random lattice permutations. Thus, above \(1/2\), long-range order is ruled out for every
choice of weights, whereas below \(1/2\) it can be obtained by choosing
the weights sufficiently spread out.

\subsection{One dimension}
The one-dimensional case is particularly interesting in the long-range
setting, since long-range order may occur even in dimension one, as
shown in Section~\ref{sect:examples}. We first prove that positive
diagonal activity rules out long-range order {for lazy-bipartite} weights
{with} finite first moment.

Let
\[
L_{\max}:=\max_{x\in\mathbb T_L}|\mathcal L_x|.
\]

\begin{thm}[Absence of long-range order at positive activity]
\label{thm:onedimensiondecay}
\label{thm:positive-activity-one-dimensional}
Let $d=1$, and let $w:\mathbb Z\to[0,\infty)$ be symmetric and lazy
bipartite. Assume that
\[
w(0)>0,
\qquad
\sum_{r\in\mathbb Z}|r|w(r)<\infty.
\]
Then, for every $\varepsilon>0$, as $L\to\infty$ through even integers,
\[
\mathbb P^{\mathrm{mdd}}_{L,w}
   \bigl(L_{\max}\ge\varepsilon L\bigr)\longrightarrow0,
\qquad
\frac1L\mathbb E^{\mathrm{mdd}}_{L,w}L_{\max}\longrightarrow0,
\]
\[
\frac1L\mathbb E^{\mathrm{mdd}}_{L,w}
   |\mathcal L_0|\longrightarrow0,
\qquad
M^{\mathrm{mdd}}_{L,w}\longrightarrow0.
\]
\end{thm}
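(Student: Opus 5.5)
The plan is to work with the one-dimensional MDD measure on $\T_L$ in its permutation form: by lazy-bipartiteness MDD coincides with RLP, so a configuration is a permutation $\pi$ of $\T_L$ with weight $\prod_x w^{(L)}(\pi(x)-x)$. The approach is to exhibit, with high probability, many ``free cuts'' and then argue that a long loop cannot jump across all of them.

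A \emph{cut} at a bond $(i,i+1)$ is \emph{free} if no jump $x\to\pi(x)$ crosses that bond. Viewing $\pi$ as a pair of dimer configurations sharing the same monomers, a free cut means no blue or red dimer crosses it. Every loop is then contained in an arc between two consecutive free cuts, so $L_{\max}$ is at most twice the largest gap between consecutive free cuts (plus boundary terms). The proof therefore reduces to showing that, with probability tending to one, free cuts occur with gaps $o(L)$.

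To produce free cuts I use an energy–entropy surgery exploiting $w(0)>0$. Fix a bond $b$ and let $N_b$ be the number of jumps crossing it. Finite first moment gives $\mathbb E N_b$ bounded only if we already control the measure, so instead I work directly with the flux. The expected number of dimers crossing $b$, summed over the two colours, is $\sum_r |r|\,\mathbb P(\text{a given vertex jumps by } r)$ by translation invariance, i.e.\ $\mathbb E N_b = \sum_r |r|\, p_L(r)$, where $p_L(r)=\mathbb P(\pi(0)-0=r)$. I bound $p_L(r)\le C w(r)/w(0)^2$ for $r\neq0$ by the map that removes the dimer pair at $0$: the blue dimer $\{0,r\}$ and its red counterpart are reconnected via monomers. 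More precisely, in MDD language one deletes the blue dimer $\{0,r\}$ and the red dimer covering $r$ (or $0$), creating monomers and rerouting; this costs $w(r)$ and gains factors $w(0)$. I expect this surgery to need care because deleting one dimer breaks a loop; the fix is to open the loop through $0$ into two monomers only when it is a double dimer, and otherwise to use the two-point-function comparison. This gives $\sup_L \mathbb E N_b\le C\sum |r|w(r)/w(0)^2<\infty$.

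Next I show that each bond is free with probability at least $\delta>0$ uniformly in $L$. Given a configuration with $N_b\le K$ (probability $\ge1/2$ for suitable $K$ by Markov), I cut every crossing jump and close things off by turning the endpoints into monomers or fixed points. The remaining loop pieces are re-closed locally into fixed points, and the number of modified vertices is bounded in terms of $K$ plus a window of size $K$. The weight ratio is bounded below by $(w(0)/\max w)^{O(K)}$ times a factor for the removed long jumps, which are bounded by $1$; the multiplicity of preimages is bounded by $C^K$. Controlling how far the rerouting propagates is the main obstacle: the jumps crossing $b$ may have distant endpoints. I would first try to restrict to the event that all crossing jumps have length at most $R$, which has high probability by the first-moment bound, so that all changes stay in a window of size $2R$.

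Finally, lower-bounded freeness at single bonds must become many free cuts along the torus. Using the decomposition at common free cuts, conditioned on a free cut at $b$ the configuration on the rest behaves like a configuration on an interval, and the same surgery applies there. I would partition $\T_L$ into $\lfloor 1/\varepsilon\rfloor$ disjoint windows of length $\varepsilon L/2$, each containing $\gg 1$ bonds. A second-moment/ergodic argument, or more simply an iterated conditional bound through the Markov-like cut structure, shows that a window has no free cut with probability $\le (1-\delta')^{m}$ over $m$ well-separated bonds, and this tends to $0$. A union bound over windows then gives $\mathbb P(L_{\max}\ge\varepsilon L)\to0$. Since $L_{\max}\le 2L$, it follows that $\mathbb E L_{\max}/L\to0$. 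The bounds $\mathbb E|\mathcal L_0|\le \mathbb E L_{\max}$ and $M^{\mathrm{mdd}}_{L,w}\to0$ follow from the same free-cut argument applied in $\Omega_{x,y}$: the open path from $0$ to $x$ must avoid free cuts, and the path-endpoint defects change weights by only bounded factors.
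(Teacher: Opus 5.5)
Your architecture (cycles confined between free cuts, with the first moment controlling the crossing flux) is the right one, and the flux bound of Step~1 is true. However, the steps that carry the argument do not work as proposed.

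The first gap is Step~2. Deleting the jumps across $b$ opens every cycle that uses them. Those cycles may reach arbitrarily far from $b$; they are exactly the long cycles you want to exclude.
\begin{itemize}
\item Turning an open path into fixed points gives a weight ratio and a preimage multiplicity that are not controlled uniformly in the path length.
\item Reconnecting the path locally needs arrows of prescribed displacement and parity, which a sparse lazy-bipartite $w$ need not have.
\item Restricting to crossing jumps of length at most $R$ localises the deleted arrows, not the rest of their cycles.
\end{itemize}
The same obstruction affects your justification of Step~1, which amounts to a uniform bound on $\operatorname{per}\widehat A_L^{0,r}/\widehat Z_L$. An open path of a single partial bijection cannot be closed at bounded cost. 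The chessboard bound is also unavailable here, since OS-positivity is not assumed.

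The paper's Lemma~\ref{lem:pa-completion} works instead with \emph{pairs} of partial bijections. It closes the open paths of $M\cup N^{T}$ by diagonal edges with alternating recolouring, and via Cauchy--Schwarz obtains $\operatorname{per}A^{I,J}\le w(0)^{-k}\operatorname{per}A$ and the event bound \eqref{eq:pa:6}. Positive free-cut probability then needs no surgery. Interpolating $A(t)=B+tC$ between the split and the full interval matrices gives $\frac{d}{dt}\log\operatorname{per}A(t)\le w(0)^{-1}\sum_{x,y}C(x,y)\le M_1/w(0)$. Hence $Z_mZ_n\le Z_{m+n}\le C_0Z_mZ_n$ with $C_0=e^{M_1/w(0)}$ (Lemma~\ref{lem:pa-factorisation}).

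The second gap is Step~3 together with the passage to the torus. There is no Markov property at a crossed bond: conditioning on several bonds being crossed favours long jumps that cross all of them. Under a mere first-moment assumption, a window of $\ell$ consecutive bonds has no free cut with probability only polynomially small in $\ell$. Indeed, one spanning transposition already has probability at least $c\,w(r)^2$ for some $r>\ell$. So a bound $(1-\delta')^m$ with uniform $\delta'$ fails for bonds of bounded spacing.

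What does hold is a renewal structure for the common free cuts of two independent interval permutations; pairs are what the comparison above requires. Decomposing at the first common cut gives $Z_m^2=\sum_r c_rZ_{m-r}^2$. With $\lambda=\lim Z_m^{1/m}$, the factorisation makes $q_m=c_m\lambda^{-2m}$ a probability law with mean at most $C_0^2$. This yields $\mathbb E[D_L]/L\to0$ for the largest gap (Lemma~\ref{lem:pa-renewal}), which is all that is needed.

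Moreover, conditioning on a free reference bond gives the interval law only on an event whose probability is bounded below but not close to one. So you must still control the torus configurations in which that bond is crossed. The paper does this as follows:
\begin{itemize}
\item It expands $\widehat A_L=A_L+T_L$ multilinearly over sets $F$ of wrap-around arrows, whose total mass is $S_L\le M_1$.
\item It bounds each term by \eqref{eq:pa:6}, at cost $w(0)^{-|F|}$ times the square root of an interval pair probability.
\item It uses that $F$ merges at most $|F|+1$ components of $P\cup Q$.
\end{itemize}
The same expansion, with one more deleted row and column, controls $M^{\mathrm{mdd}}_{L,w}$. Your claim that the endpoint defects in $\Omega_{x,y}$ change weights ``by only bounded factors'' is precisely this comparison, and it is not available for a single configuration.
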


The restriction to lazy-bipartite weights is relevant here.
Corollary~\ref{cor:regularised-polynomial-LRO} gives non-bipartite
one-dimensional examples with positive diagonal activity and infinite
first moment for which long-range order occurs. These examples do not
establish sharpness of the finite-first-moment assumption within the
class covered by Theorem~\ref{thm:onedimensiondecay}.

The assumption $w(0)>0$ is essential. At zero monomer
activity, finite first moment does not prevent long-range order. More
precisely, {every symmetric, normalised, bipartite weight with
finite first moment exhibits long-range order and macroscopic loops.}

\begin{thm}[Long-range order and macroscopic loops at zero activity]
\label{thm:zero-activity-one-dimensional}
Let $d=1$, and let $w:\mathbb Z\to[0,\infty)$ be symmetric, normalised,
and bipartite. Assume that
\[
{M_1:=\sum_{r\in\mathbb Z}|r|w(r)<\infty.}
\]
Then there exist constants $\varepsilon,c>0$, depending only on $w$,
such that, for every even $L$,
\[
\mathbb P^{\mathrm{mdd}}_{L,w}
   \bigl(L_{\max}\ge\varepsilon L\bigr)\ge c,
\]
\[
\mathbb P^{\mathrm{mdd}}_{L,w}
   \bigl(|\mathcal L_0|\ge\varepsilon L\bigr)\ge\varepsilon c,
\qquad
M^{\mathrm{mdd}}_{L,w}\ge c.
\]
\end{thm}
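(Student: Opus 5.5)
The plan is to exploit a conserved \emph{flux} of bipartite perfect matchings on the cycle $\T_L$. For a perfect matching $D$ and a cut $j$ (the cut between $j$ and $j+1$), let $\varphi_j(D)$ be the number of dimers crossing $j$ clockwise from an even vertex to an odd vertex, minus the number crossing clockwise from an odd vertex to an even vertex. Since $w$ is bipartite, every dimer joins $\T_L^{\mathrm e}$ to $\T_L^{\mathrm o}$. Moving the cut past a vertex $v$ changes $\varphi$ by the contribution of the unique dimer at $v$, and these contributions cancel. Hence $\varphi_j(D)=:\varphi(D)$ does not depend on $j$. If instead $D$ is a matching with monomers at an even vertex $x$ and an odd vertex $y$, then $\varphi_j$ jumps by $\pm1$ exactly at $x$ and at $y$. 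In the double-dimer superposition $(D_1,D_2)$, the difference $\varphi(D_1)-\varphi(D_2)$ is the total winding of the alternating loops. A loop with nonzero winding must cross every cut and so has length at least of order $L$; more precisely, it meets every window of length comparable to the longest edges it uses.

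\textbf{Step 1 (tightness of crossings).} Show that the number $N_j$ of dimers crossing a fixed cut satisfies $\mathbb E^{\mathrm{dim}}_{L,w}N_j\le C$ uniformly in $L$. By translation invariance, $\sum_j N_j$ is the total length of the dimers, so it suffices to bound the expected dimer length. For this I would use a local surgery: a dimer of length $r$ is compared with configurations obtained by rerouting near it. Under the finite first moment $M_1<\infty$, this should give $\mathbb P(\text{dimer at }0\text{ has length }r)\le C\,w(r)$ up to uniformly bounded ratio factors, hence the bound $C\sum_r|r|w(r)$. I expect this to be the main technical obstacle. A first attempt would be a direct injection that removes a long edge together with the edges nested inside it and replaces them with shorter ones.

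\textbf{Step 2 (nondegeneracy of the flux).} Use the reflection $x\mapsto -x$, which preserves parity and maps $\varphi$ to $-\varphi$, so $\varphi$ is symmetric. Then show that $\varphi$ is not concentrated on a single value: $\mathbb P(\varphi(D)=k)\le 1-2c$ uniformly in $k$ and $L$. The approach is to construct, using Step 1, a bounded-cost local surgery near a cut carrying $O(1)$ crossings that shifts the flux by one. For instance, one rotates the matching along a window of length of order $L$; the insertion costs only $O(1)$ factors at the two window ends, and the window itself is chosen where crossings are few. By independence of $D_1$ and $D_2$ in the fully packed regime, this gives $\mathbb P(\varphi(D_1)\ne\varphi(D_2))\ge c'$, hence $\mathbb P(L_{\max}\ge\varepsilon L)\ge c$.

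\textbf{Step 3 (from winding loops to $|\mathcal L_0|$ and $M$).} On the event that a loop winds, it has length at least $\varepsilon L$. A positive fraction of the vertices, namely $\varepsilon$ times a constant, lie on such loops, because a winding loop with bounded crossings per cut must visit every window. Translation averaging then gives $\mathbb P(|\mathcal L_0|\ge\varepsilon L)\ge\varepsilon c$. For $M^{\mathrm{mdd}}_{L,w}$, use the fully packed identity \eqref{eq:twopointandmonomer}, $G=\mathcal C$. On a winding loop through $0$, deleting one blue dimer and reopening the loop at any vertex $u$ of the opposite parity on that loop yields a configuration in $\Omega_{0,u}$. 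The weight changes by a factor $w_{\cdot,\cdot}^{-1}\ge1$, and the map is at most boundedly many-to-one. Summing over $u$ gives $\sum_u G(u)\ge c\,\mathbb E|\mathcal L_0|\ge c''L$, and therefore $M^{\mathrm{mdd}}_{L,w}\ge c$.
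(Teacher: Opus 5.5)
\textbf{Verdict: genuine gap.} Your overall architecture (winding of double-dimer loops, control of long dimers, then averaging) is the paper's, but the decisive step rests on a false invariance and is replaced by a surgery you do not have.

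\textbf{The flux is not cut-independent, and the missing idea is half-integrality.} Give each dimer an integer lift of its displacement. On $\T_L$ with periodised weights this is mandatory; the paper samples $r_D(x)$ with probability proportional to $w(r)$. With lifts fixed, each dimer contributes a jump $+1$ to $j\mapsto\varphi_j(D)$ at its even endpoint and $-1$ at its odd endpoint. So $\varphi_j(D)$ alternates between two consecutive integers according to the parity of $j$. For the matching $\{\{0,1\},\{2,3\}\}$ of $\T_4$ it takes the values $1,0,1,0$.
\begin{itemize}
\item Only $\varphi_j(D_1)-\varphi_j(D_2)$ is cut-independent.
\item The reflection $x\mapsto-x$ sends the cut after $j$ to the cut after $-j-1$, which has opposite parity. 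So it does not map $\varphi_j$ to $-\varphi_j$.
\item The correct invariant is the averaged flux $\Phi_L(D)=\frac1L\sum_j\varphi_j(D)=\frac1L\sum_{x\in\T_L^{\mathrm e}}r_D(x)$. It lies in $\mathbb Z+\frac12$, because $\sum_x r_D(x)\equiv\sum_{y\ \mathrm{odd}}y-\sum_{x\ \mathrm{even}}x=\frac L2\pmod L$.
\item A symmetric random variable that never vanishes is positive with probability exactly $\frac12$. By independence, $\mathbb P(\Phi_L(B)\Phi_L(R)<0)=\frac12$, and no non-concentration estimate is needed.
\end{itemize}
Your window-rotation surgery is not a proof as it stands. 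The window is chosen depending on the configuration, so the multiplicity of the map is uncontrolled. The dimers crossing the window ends may be long, so the repair is not local. Moving one of their endpoints by a site makes their length even, which has zero weight for bipartite $w$.

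\textbf{Step 1 is only a hope.} The clean route is $\mathbb P(r_D(x)=r)=w(r)\,\mathcal C_{L,w}(x,x+r)$. Combine this with the uniform bound $\mathcal C_{L,w}\le 1/w(a)$, for odd $a$ with $w(a)>0$, which follows from the switching injection of Proposition~\ref{prop:monomerloop-lowerbound}.

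\textbf{Step 3 needs a tail bound, not $\mathbb E N_j\le C$.} A winding loop need not have length $\ge\varepsilon L$, since it may use a few edges of length of order $L$. You need the following:
\begin{itemize}
\item Choose $K$ with $\sum_{|r|>K}|r|w(r)$ small.
\item Then, with probability at least $\frac34$, the total length $T_K$ of arrows longer than $K$ is at most $L/2$.
\item On a winding cycle $C$ this gives $L\le\sum_{x\in C}|r_x|\le K|C|+T_K$, hence $|C|\ge L/(2K)$.
\end{itemize}

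\textbf{Your map into $\Omega_{0,u}$ does not exist for general $u$.} Deleting the blue dimer $\{0,v\}$ of a loop gives blue defects at $0$ and $v$ only, so it lands in $\Omega_{0,v}$. To move the second defect to another vertex $u$ of the loop, you must also delete a red dimer and switch colours along an arc. That creates two red defects as well. The resulting inequality is
$\mathbb P(0\leftrightarrow u)\le\mathcal C_{L,w}(0,u)\sum_{u',v'}w_{0,u'}w_{u,v'}\mathcal C_{L,w}(u',v')$. It closes only with the same uniform bound on $\mathcal C_{L,w}$, and this is exactly \eqref{eq:reverse-switching-bound}.
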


\subsection{Paper organisation and notation}

This paper is {organised} as follows.
In Section~\ref{sect:spinrepresentation} we introduce the complex spin representation in the long-range setting, prove reflection positivity, and derive its main consequences: site monotonicity, Gaussian domination, and the infrared bound.
In Section~\ref{sect:fromspinstodimers} we discuss the connection between the spin representation, dimers and permutations. 
In Section~\ref{sect:sufficientcond} we establish general sufficient conditions for long-range order and the occurrence of macroscopic loops.
In Section~\ref{sect:onehalf} we present the proof of Theorem~\ref{thm:decay-large-monomer}.
In Section~\ref{sect:decayonedimension} we present the proof of Theorem~\ref{thm:onedimensiondecay}.
{In Section~\ref{sect:proof-zero-activity-one-dimensional} we prove Theorem~\ref{thm:zero-activity-one-dimensional}
and derive Corollary~\ref{cor:bi-infinite-one-dimensional}; the remaining auxiliary arguments are collected in the appendix.}

\paragraph{Notation.}
We  use the  notation $\mathbb{N}_0 = \{0, 1, \ldots\}$,
$\mathbb{N}=\{1,2,\ldots\}$, $\mathbb{R}^+_0 = \{x \in \mathbb{R}: x \geq 0\}$. 
We use the abbreviations `LHS' for  `left-hand side' and `RHS' for `right-hand side'.

\section{The complex spin representation}
\label{sect:spinrepresentation}
In this section we introduce the complex spin representations of the monomer
double-dimer and random lattice-permutation models and show that the associated complex measures are reflection
positive. We then discuss the main consequences of reflection positivity.

\subsection{The spin representation}
\label{sect:spinsystem} 
Let \(G=(V,E)\) be a finite connected undirected graph with a prescribed
vertex \(o\in V\), {and let $V=V^{\mathrm e}\mathbin{\dot\cup}V^{\mathrm o}$ be a prescribed decomposition with $o\in V^{\mathrm e}$}. Let
$
w_{i,j}=w_{j,i}\geq0,
$
be symmetric weights for each pair of vertices $i, j \in V$. 

We denote by \(V^{\mathrm e}\) and \(V^{\mathrm o}\), respectively, the
{two prescribed parity classes; the edge set need not respect this decomposition}.
Let
\[
\Xi=[0,2\pi)^2,
\qquad
\Omega_s:=\Xi^V.
\]
For each configuration
\[
\boldsymbol{s}=(s_z)_{z\in V}\in\Omega_s,
\qquad
s_z=(s_z^1,s_z^2),
\]
we define the spin \(S_z=(S_z^1,S_z^2)\) at \(z\), where
\(S_z^k:\Omega_s\to\mathbb C\), \(k\in\{1,2\}\), is given by
\begin{equation}\label{eq:spinvariable}
S_z^k(\boldsymbol{s})
:=
\begin{cases}
e^{\mathrm i s_z^k},
& (k=1\text{ and }z\in V^{\mathrm e})
  \text{ or }(k=2\text{ and }z\in V^{\mathrm o}),\\[1mm]
e^{-\mathrm i s_z^k},
& (k=2\text{ and }z\in V^{\mathrm e})
  \text{ or }(k=1\text{ and }z\in V^{\mathrm o}).
\end{cases}
\end{equation}
Thus, \(s_z^1\) and \(s_z^2\) are the angles associated with the two
components of \(S_z\).
For \(A\in\{\mathrm{per},\mathrm{mdd}\}\), define
\begin{align}
\gamma_z^A(\boldsymbol{s})
&:=
\frac{1}{(2\pi)^2}
\begin{cases}
w_{z,z}
+\overline{S_z^1(\boldsymbol{s})}\,\overline{S_z^2(\boldsymbol{s})},
& A=\mathrm{mdd},\\[1mm]
w_{z,z}
+\bigl(\overline{S_z^1(\boldsymbol{s})}\bigr)^2
+\bigl(\overline{S_z^2(\boldsymbol{s})}\bigr)^2,
& A=\mathrm{per},
\end{cases}
\label{eq:gfunction}
\\
\boldsymbol{\gamma}^A(\boldsymbol{s})
&:=
\prod_{z\in V}\gamma_z^A(\boldsymbol{s}).
\label{eq:complealternationgamma}
\end{align}

We define the Hamiltonian
\begin{equation}\label{eq:hamiltoninan}
H(\boldsymbol{s})
:=
\sum_{\{i,j\}\in E}w_{i,j}
\left(
S_i^1(\boldsymbol{s})S_j^1(\boldsymbol{s})
+
S_i^2(\boldsymbol{s})S_j^2(\boldsymbol{s})
\right).
\end{equation}

For every measurable function \(f:\Omega_s\to\mathbb C\), set
\begin{align}
\langle f\rangle_{G,w, A}
&:=
\frac{1}{Z^{\mathrm{spin}}_{G,w,A}}
\int_{\Omega_s}
\boldsymbol{d s}\,
\boldsymbol{\gamma}^A(\boldsymbol{s})
e^{H(\boldsymbol{s})}f(\boldsymbol{s}),
\label{eq:measure}
\\
Z^{\mathrm{spin}}_{G,w, A}
&:=
\int_{\Omega_s}
\boldsymbol{d s}\,
\boldsymbol{\gamma}^A(\boldsymbol{s})
e^{H(\boldsymbol{s})},
\end{align}
where
$
\boldsymbol{d s}:=
\prod_{z\in V}ds_z^1\,ds_z^2.
$
The choices \(A=\mathrm{per}\) and \(A=\mathrm{mdd}\) give the spin
representations of the random lattice permutation model and the monomer
double-dimer model, respectively.

\subsection{Reflection positivity}
\label{sect:RPsection}
\label{sect:RP}
Fix \(A\in\{\mathrm{per},\mathrm{mdd}\}\); all statements below hold for both choices. On \(G=\T_L\), let \(\langle\cdot\rangle_{L,w,A}\) denote the spin expectation \eqref{eq:measure} associated with the periodised weights \(w^{(L)}\) from Definition~\ref{def:periodicisation}. We show that this complex measure is reflection positive whenever \(w\) is OS-positive. Let \(\mathcal S\) be a mid-edge plane orthogonal to \(e_i\) for some \(i\in\{1,\ldots,d\}\).

\begin{defn}[Reflection of sites, configurations and functions]
Let \(\Theta:\T_L\to\T_L\) be reflection in \(\mathcal S\). We use the same symbol for
\[
\Theta(\boldsymbol s):=(s_{\Theta x})_{x\in\T_L},\qquad
\Theta f(\boldsymbol s):=f(\Theta\boldsymbol s),
\]
where \(\boldsymbol s=(s_x)_{x\in\T_L}\in\Omega_s\) and \(f:\Omega_s\to\mathbb C\).
\end{defn}
\begin{defn}[Function domain]
A function \(f:\Omega_s\to\mathbb C\) has domain \(D\subset\T_L\) if \(f(\boldsymbol s)=f(\boldsymbol s')\) whenever \(s_x=s_x'\) for every \(x\in D\). In particular, \(S_x^k\), \(k=1,2\), has domain \(\{x\}\).
\end{defn}
\begin{defn}[Torus halves and \(\mathcal A^\pm\)-functions]
Let \(\T_L=\T_L^+\mathbin{\dot\cup}\T_L^-\), where the two halves are connected and \(\Theta(\T_L^\pm)=\T_L^\mp\). Denote by \(\mathcal A^\pm\) the family of integrable, measurable functions with domain \(\T_L^\pm\).
\end{defn}
For \(x\in\T_L\), \eqref{eq:spinvariable} and \eqref{eq:gfunction} give
\begin{align}
\Theta S_x&=\overline{S_{\Theta x}}
=\bigl(\overline{S_{\Theta x}^1},\overline{S_{\Theta x}^2}\bigr),
\label{eq:complexalternation}\\
\Theta\gamma_x^A&=\overline{\gamma_{\Theta x}^A}.
\label{eq:gamma-reflection}
\end{align}

\begin{defn}[Osterwalder--Schrader positivity]
\label{def:OSpos-quadratic}
Let \(w:\mathbb Z^d\times\mathbb Z^d\to[0,\infty)\) be symmetric. For \(i\in\{1,\ldots,d\}\), set
\[
\theta_i x:=x-2(x\cdot e_i)e_i+e_i,\qquad
\mathbb Z^d_{i,+}:=\{x:x\cdot e_i>0\}.
\]
We call \(w\) \emph{OS-positive} if, for every finitely supported \(f:\mathbb Z^d_{i,+}\to\mathbb C\),
\begin{equation}\label{eq:OSpositivity}
\sum_{x,y\in\mathbb Z^d_{i,+}}\overline{f(x)}\,w_{x,\theta_i y}f(y)\geq0
\qquad\text{for every }i\in\{1,\ldots,d\}.
\end{equation}
\end{defn}

\begin{lem}[OS-positivity under periodisation]
\label{lem:OS-periodisation}
If \(w\) is translation invariant, summable, and OS-positive and \(L\in2\mathbb N\), then, for every mid-edge reflection \(\theta_i\) on \(\T_L\) and every \(f:\T_{L,i,+}\to\mathbb C\),
\begin{equation}\label{eq:OS-periodised}
\sum_{x,y\in\T_{L,i,+}}\overline{f(x)}\,w^{(L)}_{x,\theta_i y}f(y)\geq0.
\end{equation}
\end{lem}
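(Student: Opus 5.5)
The plan is to lift a torus test function to a finitely supported function on \(\mathbb Z^d_{i,+}\) by unfolding the periodisation, and then to apply the OS-positivity \eqref{eq:OSpositivity} of \(w\) directly.

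\emph{Setup.} By translation invariance we may take \(i=1\) and place the reflection plane between \(0\) and \(1\) in direction \(e_1\). On \(\T_L\) with \(L\) even, the mid-edge reflection is \(\theta_1 x = x-2(x\cdot e_1)e_1+e_1\) taken mod \(L\). The half-torus \(\T_{L,1,+}\) corresponds to the sites with first coordinate in \(\{1,\dots,L/2\}\), and its image consists of those with first coordinate in \(\{1-L/2,\dots,0\}\).

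\emph{Unfolding.} Write
\[
w^{(L)}_{x,\theta_1 y}=\sum_{z\in\mathbb Z^d}w_{x,\theta_1 y+Lz}.
\]
Decompose \(z=(z_1,z')\). Translations by \(Lz'\) in the directions orthogonal to \(e_1\) commute with \(\theta_1\). In the \(e_1\) direction, \(\theta_1 y+Lz_1e_1\) has first coordinate \(1-y_1+Lz_1\). For each fixed \(z_1\), this equals the first coordinate of \(\theta_1\) applied to a point with first coordinate \(y_1-Lz_1\).

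Every \(y_1-Lz_1\) must be positive for \(\theta_1\) to map into \(\mathbb Z^d_{1,+}\) correctly. This fails for \(z_1\ge 1\), so the direct approach needs care. Instead we use symmetry of the torus sum. The second reflection plane of the torus, at \(L/2+1/2\), is identified with the first, and the \(z_1\)-sum splits into copies. Concretely, define the finitely supported function on \(\mathbb Z^d_{1,+}\)
\[
F_N(x)=f(x \bmod L)\,\mathbf 1\{x_1\in\{1,\dots,L/2\}+L\{0,\dots,N-1\},\ |x'|_\infty\le NL\}.
\]
This function includes only the translates \(x_1+Lm\) with \(m\ge0\) whose reduction lies in the positive half. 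Note that the set \(\{1,\dots,L/2\}+L\mathbb N_0\) is not \(\theta\)-compatible by itself. The remedy is to also include the translates by \(L/2\)-shifted reflected copies, \(F_N(x)=f(\tau(x))\), where \(\tau:\mathbb Z_{>0}\to\{1,\dots,L/2\}\) is the folding map (``reflect mod \(L\)'') composed with the periodic reduction in the orthogonal coordinates. Since \(L\) is even, the torus reflection \(\theta_1\) also reflects about \(L/2+1/2\). Each torus point therefore has an exact preimage structure: for every \(x\in\T_{L,1,+}\) and \(v\in\mathbb Z^d\) with \(\theta_1 y+v\) equivalent mod \(L\), the unfolded pair \((x,\,\cdot)\) appears exactly once per period cell.

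\emph{Limit.} Evaluate \(\sum_{x,y}\overline{F_N(x)}w_{x,\theta_1 y}F_N(y)\ge0\), which holds by \eqref{eq:OSpositivity}. Divide by the number of period cells, \(\asymp N^{d}\). Using summability of \(w\), the boundary cells contribute \(o(N^d)\). In the interior, each cell reproduces \(\sum_{x,y\in\T_{L,1,+}}\overline{f(x)}\,w^{(L)}_{x,\theta_1 y}f(y)\). Letting \(N\to\infty\) gives \eqref{eq:OS-periodised}.

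\emph{Main obstacle.} The main obstacle is the bookkeeping that verifies the unfolded quadratic form counts each term \(w_{x,\theta_1y+Lz}\) exactly once per cell with matching values of \(f\). In the \(e_1\)-direction this requires consistency of the folding \(\tau\) with both reflection planes, which is where \(L\) even is used. If the exact folding fails, I would fall back on a Fourier argument: OS-positivity for translation-invariant \(w\) is equivalent to the kernel \(k_1\mapsto\) reflected transfer being a positive-definite function in the transverse variables, and periodisation samples this positive function at \(\T_L^*\), which preserves positivity.
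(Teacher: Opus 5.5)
The transverse part of your unfolding is sound and is what the paper does: spread $f$ over the translates $x+L(0,z')$, $z'\in\{-N,\dots,N\}^{d-1}$, apply \eqref{eq:OSpositivity}, normalise by the number of translates, and use summability. This shows that the transversely periodised kernel is positive semidefinite.

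The gap is the $e_1$ direction, and more careful bookkeeping will not close it. The reflected form depends on the first coordinates only through $X_1+Y_1-1$, so it has no translation-invariant bulk along $e_1$. The only cells you can average over are the $\asymp N^{d-1}$ transverse ones, not $N^d$.

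Concretely, take $d=1$ and your folded $F=f\circ\tau$. The preimages of $x$ are $x+Lm$ and $L+1-x+Lm$ ($m\ge0$). So the coefficient of $\overline{f(x)}f(y)$ collects $w$ at the arguments $x+y-1+L(m+n)$, $\pm(x-y)+L(m+n+1)$ and $L(m+n+2)+1-x-y$. The torus form instead needs $x+y-1+Lj$ ($j\ge0$) and $Lk+1-x-y$ ($k\ge1$), each exactly once. This produces three defects:
\begin{itemize}
\item Your form contains unreflected couplings $w(\pm(x-y)+Lk)$, whose arguments have the wrong parity to occur in $w^{(L)}_{x,\theta_1y}$.
\item The family $Lk+1-x-y$ starts at $k=2$. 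So the image through the second plane $x_1=L/2+\frac12$, which the folding was meant to capture, is missing.
\item Each image carries a multiplicity equal to the number of pairs $(m,n)$ with prescribed $m+n$. This depends on the image and cannot be removed by normalising in $N$.
\end{itemize}
Already for $L=2$ the torus form is $|f(1)|^2\sum_{s\in2\mathbb Z+1}w(s)$. Yours, with $X,Y\le2N$, equals $|f(1)|^2\sum_{s=1}^{4N-1}\min\{s,4N-s\}\,w(s)$. Your Fourier fallback does not repair this. Sampling at $\T_L^*$ describes only the transverse periodisation; along $e_1$ the OS form is of Hankel type, not a convolution.

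The missing idea is to split the longitudinal images into separate reflection forms instead of merging them into one test function. Write $\theta_a(x_1,x'):=(2a+1-x_1,x')$ for the reflection through the mid-edge plane $x_1=a+\frac12$, so that your $\theta_1$ is $\theta_0$. Since $L$ is even, $a_r:=Lr/2\in\mathbb Z$ and $\theta_0y+Lre_1=\theta_{a_r}y$. Hence
\[
w^{(L)}_{x,\theta_0y}=\sum_{r\in\mathbb Z}K_{a_r}(x,y),
\qquad
K_a(x,y):=\sum_{z'\in\mathbb Z^{d-1}}w_{x,\theta_ay+L(0,z')},
\]
on the fundamental domain $\{1\le x_1\le L/2\}$.
\begin{itemize}
\item For $r\le0$ this domain lies in $\{x_1>a_r\}$, where OS-positivity for $\theta_{a_r}$ follows from translation invariance.
\item For $r\ge1$ it lies in $\{x_1\le a_r\}$, where it follows from the substitution $G(u):=\overline{F(\theta_{a_r}u)}$ and symmetry of $w$.
\end{itemize}
Your transverse averaging then makes each $K_{a_r}$ positive semidefinite, and absolute summability lets you sum over $r$. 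This is the paper's argument. It is also where evenness of $L$ is genuinely used: it makes every $\theta_{a_r}$ a mid-edge reflection of the lattice.
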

The proof is postponed to the appendix.

\begin{prop}[Reflection positivity]
\label{prop:reflectionpos}
Let \(w\) be admissible. For every \(f,g\in\mathcal A^+\),
\[
\langle f\,\overline{\Theta g}\rangle_{L,w,A}
=
\overline{
\langle g\,\overline{\Theta f}\rangle_{L,w,A}
},
\qquad
\langle f\,\overline{\Theta f}\rangle_{L,w,A}\geq0.
\]
Consequently,
\begin{equation}\label{eq:RPstatement}
\left|
\langle f\,\overline{\Theta g}\rangle_{L,w,A}
\right|^2
\leq
\langle f\,\overline{\Theta f}\rangle_{L,w,A}
\langle g\,\overline{\Theta g}\rangle_{L,w,A}.
\end{equation}
\end{prop}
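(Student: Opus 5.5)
The argument follows the classical Fröhlich–Israel–Lieb–Simon scheme: write the spin weight as a product of a factor living on \(\T_L^+\), its reflected conjugate, and a crossing factor that is an infinite nonnegative combination of products of the form \(F\,\overline{\Theta F}\).

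\textbf{Step 1: factorise the weight.} Decompose
\[
H=H^++H^-+H^{\pm},
\]
where \(H^\pm\) collects the edges inside \(\T_L^\pm\) and \(H^{\pm}\) the edges crossing \(\mathcal S\). By translation invariance, \(w^{(L)}_{\Theta x,\Theta y}=w^{(L)}_{x,y}\). Using \eqref{eq:complexalternation}, the quantity \(\Theta H^+\) is \(H^+\) evaluated with reflected, conjugated spins, so \(H^-=\overline{\Theta H^+}\). Similarly, \eqref{eq:gamma-reflection} gives
\[
\prod_{x\in\T_L^-}\gamma^A_x=\overline{\Theta\Gamma^+},
\qquad
\Gamma^+:=\prod_{x\in\T_L^+}\gamma_x^A .
\]
Hence the density equals
\[
F\,\overline{\Theta F}\,e^{H^{\pm}},
\qquad
F:=\Gamma^+e^{H^+}\in\mathcal A^+ .
\]

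\textbf{Step 2: write the crossing term in positive form.} Parametrise crossing pairs as \((x,\Theta y)\) with \(x,y\in\T_L^+\). Then
\[
H^{\pm}=\sum_{k=1,2}\sum_{x,y\in\T_L^+} w^{(L)}_{x,\Theta y}\,S_x^kS^k_{\Theta y}.
\]
The sum is over ordered pairs, so that each unordered crossing edge is counted once. Since \(\Theta\overline{S_y^k}=S_{\Theta y}^k\) by \eqref{eq:complexalternation}, we have \(S^k_{\Theta y}=\overline{\Theta \overline{S^k_y}}\), so that
\[
H^{\pm}=\sum_k\sum_{x,y} M_{x,y}\,S^k_x\,\overline{\Theta\overline{S^k_y}},
\qquad
M_{x,y}:=w^{(L)}_{x,\Theta y}.
\]
By Lemma~\ref{lem:OS-periodisation}, \(M\) is Hermitian: it is real and symmetric because \(w\) is symmetric and \(\Theta\) is an involution. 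Moreover \(M\) is positive semidefinite, so
\[
M=\sum_j \lambda_j v_jv_j^*,\qquad \lambda_j\ge0 .
\]
Checking that \(\theta_i\) on \(\T_L\) coincides with \(\Theta\) is a matter of convention. It follows that
\[
H^{\pm}=\sum_{k,j}\lambda_j\,A_{jk}\,\overline{\Theta B_{jk}},
\qquad
A_{jk}=\sum_x v_j(x)S^k_x,\quad B_{jk}=\sum_y \overline{v_j(y)}\,\overline{S^k_y},
\]
with \(A_{jk},B_{jk}\in\mathcal A^+\). One must take care that the same function appears on both sides. Expanding \(e^{H^{\pm}}\) as a power series gives a sum, with nonnegative coefficients, of terms of the form \(P\,\overline{\Theta Q}\), where \(P\) is a monomial in the \(A\)'s and \(Q\) is the corresponding monomial in the \(B\)'s, with \(P,Q\in\mathcal A^+\).

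\textbf{Step 3: prove positivity of each term.} Consequently \(Z\langle f\overline{\Theta f}\rangle\) is a nonnegative combination of integrals
\[
\int (fFP)\,\overline{\Theta(fFQ)} .
\]
Since \(P\neq Q\) in general, these integrals are not manifestly nonnegative. I would repair this with a single integration identity. For \(G\in\mathcal A^+\),
\[
\int G\,\overline{\Theta K}=\Bigl(\int_{+}G\Bigr)\overline{\Bigl(\int_{+}K\Bigr)} .
\]
This holds because the integral factorises over the two halves and \(\Theta\) is measure preserving. Thus each term equals
\[
\Bigl(\int fFP\Bigr)\,\overline{\Bigl(\int fFQ\Bigr)}.
\]
To obtain positivity, I would use the variable change \(s\mapsto -s\) on \(\T_L^+\) combined with complex conjugation. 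This sends \(S^k_x\mapsto\overline{S^k_x}\) and, since the weights are real, \(\int fFP\) to \(\overline{\int \tilde f\,\overline F\,\tilde P}\). The upshot is that the right object to expand is
\[
H^{\pm}=\sum_k\sum_{x,y} M_{x,y}\,S_x^k\,\overline{\Theta \overline{S_y^k}},
\]
in which the two factors are linked by exactly this conjugation symmetry. This step is the main obstacle. If it fails, the fallback is to redefine the pairing \(\langle f,g\rangle:=\langle f\,\overline{\Theta g}\rangle\) and show directly that it is a Hermitian semi-inner product on \(\mathcal A^+\). Hermiticity comes from \(\overline{\Theta(F\overline{\Theta F})}=F\overline{\Theta F}\) together with the reality of \(M\) and the symmetry \(x\leftrightarrow y\). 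Positivity is then proved by writing \(e^{H^\pm}\) as a Gaussian mixture: introduce auxiliary complex fields \(\phi\) with covariance \(M\), so that
\[
e^{H^{\pm}}=\mathbb E_\phi\bigl[\,e^{\langle\phi,S\rangle}\,\overline{\Theta e^{\langle \phi,\overline{S}\rangle}}\,\bigr],
\]
and check that the two exponents are conjugate under reflection.

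\textbf{Step 4: deduce Cauchy--Schwarz.} The first identity, \(\langle f\overline{\Theta g}\rangle=\overline{\langle g\overline{\Theta f}\rangle}\), follows from the invariance of the density under \(\boldsymbol s\mapsto\Theta\boldsymbol s\) followed by conjugation, which is established in Step 1. The inequality \eqref{eq:RPstatement} is then the Cauchy--Schwarz inequality for the positive semidefinite Hermitian form \((f,g)\mapsto\langle f\overline{\Theta g}\rangle\).
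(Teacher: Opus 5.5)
Your overall plan is the right one and matches the paper: factorise the density as $F\,\overline{\Theta F}\,e^{H^\pm}$, diagonalise the OS kernel, and expand the exponential. The proof breaks, however, at a conjugation slip in Step 2, and Step 3 never recovers from it. From \eqref{eq:complexalternation} you correctly get $\Theta\overline{S^k_y}=S^k_{\Theta y}$, but you then write $S^k_{\Theta y}=\overline{\Theta\overline{S^k_y}}$. In fact $\overline{\Theta\overline{S^k_y}}=\Theta S^k_y=\overline{S^k_{\Theta y}}$. Your displayed formula for $H^{\pm}$ is therefore $\sum M_{x,y}S^k_xS^k_{\Theta y}$ with the second factor replaced by its conjugate, so it is not the crossing Hamiltonian. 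The mismatch $A_{jk}\neq B_{jk}$, and hence $P\neq Q$, is an artefact of this error. Your proposed repairs do not close the gap. The substitution $\boldsymbol s\mapsto-\boldsymbol s$ on $\T_L^+$ replaces $f$ by a different function $\tilde f$ (and $F$ by $\overline F$), so it cannot produce a square for arbitrary $f\in\mathcal A^+$. The Gaussian-mixture fallback is written with the same wrong conjugate $\overline S$, and its decisive check is left undone.

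The missing observation is simply $S^k_{\Theta y}=\overline{\Theta S^k_y}$, which is exactly what the alternating conjugation in \eqref{eq:spinvariable} is built to provide. Write $M_{x,y}=w^{(L)}_{x,\Theta y}=\sum_j\lambda_jv_j(x)\overline{v_j(y)}$ with $\lambda_j\ge0$, by Lemma~\ref{lem:OS-periodisation}. Then
\[
H^{\pm}=\sum_{k=1}^2\sum_{x,y\in\T_L^+}M_{x,y}\,S^k_x\,\overline{\Theta S^k_y}=\sum_{j,k}\lambda_j\,A_{jk}\,\overline{\Theta A_{jk}},\qquad A_{jk}:=\sum_{x\in\T_L^+}v_j(x)S^k_x,
\]
so the same function sits on both sides. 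Expanding $e^{H^{\pm}}$ writes $Z\langle f\,\overline{\Theta g}\rangle$ as a series, with nonnegative coefficients, of terms $\int (fFP)\,\overline{\Theta(gFP)}$, where $P$ is a monomial in the $A_{jk}$. Your factorisation identity turns each term into $\bigl(\int_+fFP\bigr)\overline{\bigl(\int_+gFP\bigr)}$. Each term is thus a positive-semidefinite Hermitian form in $(f,g)$, which gives Hermitian symmetry, positivity and \eqref{eq:RPstatement} at once; this is the paper's argument.

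A minor point: the identity $w^{(L)}_{\Theta x,\Theta y}=w^{(L)}_{x,y}$ used in Step 1 is reflection invariance, not a consequence of translation invariance. For admissible weights it follows from the symmetry of $w$ together with the Hermitian property of the OS kernel.
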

\begin{proof}
Set
\[
\langle f\rangle_0:=\int_{\Omega_s}f(\boldsymbol s)\,d\boldsymbol s,\qquad
\langle f\rangle_{1,A}:=\int_{\Omega_s}f(\boldsymbol s)
\boldsymbol\gamma^A(\boldsymbol s)\,d\boldsymbol s.
\]
The product measure \(\langle\cdot\rangle_0\) is reflection positive
\cite[Chapter~10]{Velenik}. Moreover, with
\[
\boldsymbol\gamma^{A,\pm}:=\prod_{x\in\T_L^\pm}\gamma_x^A,\qquad
\Theta\boldsymbol\gamma^{A,+}=\overline{\boldsymbol\gamma^{A,-}},
\]
we have, for \(f,g\in\mathcal A^+\),
\[
\langle f\,\overline{\Theta g}\rangle_{1,A}
=\bigl\langle f\boldsymbol\gamma^{A,+}\,
\overline{\Theta(g\boldsymbol\gamma^{A,+})}\bigr\rangle_0.
\]
Thus \(\langle\cdot\rangle_{1,A}\) is reflection positive.
Using ordered pairs, write
\[
H=\frac12\sum_{\substack{x,y\in\T_L\\x\neq y}}w^{(L)}_{x,y}
\sum_{i=1}^2S_x^iS_y^i=H^++H^-+H^R,
\]
where
\[
H^\pm:=\frac12\sum_{\substack{x,y\in\T_L^\pm\\x\neq y}}w^{(L)}_{x,y}
\sum_{i=1}^2S_x^iS_y^i,\qquad
H^R:=\sum_{x,z\in\T_L^+}w^{(L)}_{x,\Theta z}
\sum_{i=1}^2S_x^i\,\overline{\Theta S_z^i}.
\]
Here symmetry of \(w^{(L)}\) was used to combine the two ordered cross terms, while reflection invariance gives \(H^-=\overline{\Theta H^+}\). The kernel \(K(x,z):=w^{(L)}_{x,\Theta z}\) is positive semidefinite by Lemma~\ref{lem:OS-periodisation}; hence, with \(N:=|\T_L^+|\),
\[
K(x,z)=\sum_{\alpha=1}^N\lambda_\alpha
\varphi_\alpha(x)\overline{\varphi_\alpha(z)},\qquad
\lambda_\alpha\geq0,
\]
for an orthonormal basis \((\varphi_\alpha)_{\alpha=1}^N\) of
\(\mathbb C^{\T_L^+}\). Therefore
\[
H^R=\sum_{\alpha=1}^N\sum_{i=1}^2
\lambda_\alpha B_\alpha^i\,\overline{\Theta B_\alpha^i},
\qquad
B_\alpha^i:=\sum_{x\in\T_L^+}\varphi_\alpha(x)S_x^i.
\]
For \(\boldsymbol m=(m_\alpha^i)\in\mathbb N_0^{2N}\), set
\[
c_{\boldsymbol m}:=\prod_{\alpha=1}^N\prod_{i=1}^2
\frac{\lambda_\alpha^{m_\alpha^i}}{m_\alpha^i!},\qquad
C_{\boldsymbol m}:=e^{H^+}\prod_{\alpha=1}^N\prod_{i=1}^2
(B_\alpha^i)^{m_\alpha^i}.
\]
Expanding \(e^{H^R}\) gives
\begin{equation}\label{eq:RP-general}
\langle f\,\overline{\Theta g}\rangle_{L,w,A}
=\frac1{Z^{\mathrm{spin}}_{L,w,A}}
\sum_{\boldsymbol m\in\mathbb N_0^{2N}}c_{\boldsymbol m}
\bigl\langle fC_{\boldsymbol m}\,
\overline{\Theta(gC_{\boldsymbol m})}\bigr\rangle_{1,A}.
\end{equation}
Since \(c_{\boldsymbol m}\geq0\),
\(C_{\boldsymbol m}\in\mathcal A^+\), and
\(Z^{\mathrm{spin}}_{L,w,A}>0\), equation
\eqref{eq:RP-general} expresses
\[
(f,g)\longmapsto
\langle f\,\overline{\Theta g}\rangle_{L,w,A}
\]
as a positive linear combination of positive-semidefinite Hermitian
forms. Hence it is itself a positive-semidefinite Hermitian form.
The claim now follows from the Cauchy--Schwarz inequality for such
forms.
\end{proof}

\subsection{Monotonicity of two-point functions}
We first extend the site-monotonicity result of \cite{LeesTaggiCMP2020, LeesTaggiJSP2021} to long-range interactions.
\begin{defn}[Reflection invariant vector function]
\label{def:reflectioninvariant}
A family \(F=(F_x)_{x\in\T_L}\), \(F_x:{\Omega_s}\to\mathbb C\), is called \emph{reflection invariant} if, for every reflection \(\Theta\) through edges and all \({x\in\T_L}\),
\[
\Theta F_x=\overline{F_{\Theta x}}
\]
\end{defn}
\begin{prop}[Monotonicity]
\label{prop:monotonicity}
Let {\(w\) be admissible, and let} \(F=(F_x)_{x\in\T_L}\) be
reflection invariant, with each
$F_x$ having domain $\{x\}$, and suppose that its joint moments are
real and translation invariant. Define
\begin{equation}\label{eq:monotonicity}
\mathcal G_L(x):=\langle F_oF_x\rangle_{L,w,A}.
\end{equation}
Suppose that there exists $M < \infty$ such that,
for each  $L \in 2 \mathbb{N}$ and each odd $n$, we have 
\begin{equation}\label{eq:boundedness}
\mathcal G_L \, (n \, e_i \,  ) \leq M.
\end{equation}
For \(x_n=ne_i\), \(i\in\{1,\ldots,d\}\)
and  \(x\in\T_L\) such that \(x\cdot e_i\) is odd, one has, for every odd
$n \in (0, L/2)$
\[
\mathcal G_L(x_n)\geq\mathcal G_L(x_{n+2}),\qquad
\mathcal G_L((x\cdot e_i)e_i)\geq\mathcal G_L(x).
\]
\end{prop}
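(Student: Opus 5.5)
The plan is to follow the Lees--Taggi argument, using reflection positivity (Proposition~\ref{prop:reflectionpos}) as the only input. We use reflections $\Theta$ through mid-edge planes orthogonal to $e_i$ and the Cauchy--Schwarz inequality \eqref{eq:RPstatement}. The symmetry of the torus under these reflections, together with translation invariance, lets us compare two-point functions at reflected points.

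\textbf{Basic inequality.} Fix $x$ with $x\cdot e_i$ odd. Choose the reflection plane $\mathcal S$ orthogonal to $e_i$ halfway between $o$ and $x$. This plane is mid-edge precisely because $x\cdot e_i$ is odd; let $\Theta$ be the corresponding reflection. Then $\Theta o$ has the same $e_i$-coordinate as $x$ and the same transverse coordinates as $o$, while $\Theta x$ has the same $e_i$-coordinate as $o$ and the transverse coordinates of $x$. By reflection invariance, $F_x=\overline{\Theta F_{\Theta x}}$. Apply \eqref{eq:RPstatement} with $f=F_o$ and $g=F_{\Theta x}$, both in $\mathcal A^+$ (we choose the half containing $o$, which also contains $\Theta x$). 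This gives
\[
\mathcal G_L(x)^2=|\langle F_o\overline{\Theta F_{\Theta x}}\rangle|^2\le \langle F_o\overline{\Theta F_o}\rangle\,\langle F_{\Theta x}\overline{\Theta F_{\Theta x}}\rangle .
\]
Next we identify the two factors. First, $\overline{\Theta F_o}=F_{\Theta o}$, so the first factor is $\mathcal G_L(\Theta o)=\mathcal G_L((x\cdot e_i)e_i)$. Second, $\overline{\Theta F_{\Theta x}}=F_x$, so the second factor is $\langle F_{\Theta x}F_x\rangle$. Since $x-\Theta x=(x\cdot e_i)e_i$, translation invariance turns this into $\mathcal G_L((x\cdot e_i)e_i)$ as well. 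Taking square roots, using that the moments are real and that the right side is nonnegative, yields $\mathcal G_L(x)\le\mathcal G_L((x\cdot e_i)e_i)$. This is the second claim.

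\textbf{First claim (on-axis monotonicity).} Write $a_n:=\mathcal G_L(x_n)$ for odd $n$. Reflecting through the mid-edge plane at $e_i$-coordinate $(n+m)/2$, with $n,m$ odd so that $n+m$ is even, does not directly compare $o$ with $x_n$. Instead we use planes through $m/2$ with $m$ even, which sends $x_n$ to $x_{m-n}$. Applying \eqref{eq:RPstatement} with $f=F_{x_{p}}$ and $g=F_{x_q}$ in the half $\T_L^+$ gives the log-convexity-type inequality
\[
a_{(p+q)}^{\,2}\le a_{2p}\,a_{2q}
\]
in the appropriate index convention. Parity bookkeeping is needed here: only odd displacements are compared, since even displacements are not covered by the claim.

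\textbf{Reduction to a sequence argument.} The inequality above shows that $n\mapsto \log a_n$ along odd $n$ is midpoint-convex on $(0,L)$. The sequence is also symmetric, $a_n=a_{L-n}$, by the reflection symmetry of the torus. We then run the standard iteration: assume $a_{n+2}>a_n$ for some odd $n<L/2$. Convexity forces geometric growth $a_{n+2k}\ge a_n(a_{n+2}/a_n)^k$. Following this growth up to $L/2$ and using symmetry, we derive a contradiction with the uniform bound \eqref{eq:boundedness}. To make this work, we enlarge $L$, which the bound allows because $M$ is independent of $L$, and we use translation invariance to compare across tori via the same odd $n$.

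\textbf{Main obstacle.} The main difficulty is this last step. Convexity alone on a fixed torus only gives monotonicity up to $L/2$ once symmetry is combined with it, and the exact indexing of reflection planes relative to odd distances must be checked carefully. If this fails, the first thing to try is the Lees--Taggi iteration: repeatedly apply Cauchy--Schwarz to obtain $a_n\ge a_{n+2}^{1-2^{-k}}\cdot(\text{bounded})^{2^{-k}}$, and let $k\to\infty$ using $M$.
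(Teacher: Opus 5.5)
Your proof of the second inequality is correct and complete. The single reflection through the mid-edge plane at $e_i$-coordinate $(x\cdot e_i)/2$, together with \eqref{eq:RPstatement}, gives $\mathcal G_L(x)^2\le\mathcal G_L((x\cdot e_i)e_i)^2$. This is the reflection-positivity route the paper imports from Lees--Taggi. The first inequality, however, is not established as written.

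To begin with, your reflections are not admissible. A plane at $e_i$-coordinate $m/2$ with $m$ even passes through sites, whereas Proposition~\ref{prop:reflectionpos} and Definition~\ref{def:reflectioninvariant} only concern mid-edge reflections. A site reflection preserves parity, so $\Theta S_x=\overline{S_{\Theta x}}$ fails. Accordingly, your inequality $a_{p+q}^2\le a_{2p}a_{2q}$ involves even displacements. The usable inequality comes from the plane at $e_i$-coordinate $n/2$ with $n$ odd, taking $f=F_{x_{-1}}$ and $g=F_{x_1}$. These lie in a common half when $3\le n\le L-3$, and one gets
\[
a_n^2\le a_{n-2}\,a_{n+2},\qquad n\ \text{odd},\quad 3\le n\le L-3.
\]
Here $a_n\ge 0$ for odd $n$, by taking the plane at $n/2$ and $f=g=F_o$.

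More seriously, your closing step would fail. There is no comparison between $\mathcal G_L$ and $\mathcal G_{L'}$, so a strict increase $a_{n+2}>a_n$ on one torus cannot be transported to a larger one. On a fixed torus, geometric growth over the finitely many steps up to $L/2$ need not exceed $M$.

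The missing observation is that the convexity above holds up to $L-3$, i.e.\ past $L/2$, so symmetry alone closes the argument. Suppose $a_{n+2}>a_n$ for some odd $n<L/2$.
\begin{itemize}
\item If $n+2=L-n$, this already contradicts $a_{L-n}=a_n$.
\item Otherwise $n+2\le L-n-2\le L-3$. Then $a_{n+2}>0$, and the inequality at $n+2$ forces $a_n>0$.
\item Induction gives $a_{j+2}/a_j\ge a_{n+2}/a_n>1$ for every odd $j$ with $n\le j\le L-n-2$.
\item Hence $a_{L-n}>a_n$, contradicting the symmetry $a_{L-n}=a_n$, which follows from translation invariance.
\end{itemize}
So the monotonicity holds on each fixed torus, and the bound $M$ plays no role.
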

\begin{proof}
Once reflection positivity is established, the proof is the same as in \cite{LeesTaggiJSP2021}.
\end{proof}

\subsection{Chessboard estimate}
Let \(F\) have domain \(\{o\}\), and let \(e_1,\ldots,e_k\) be a self-avoiding nearest-neighbour path from \(o\) to \(t\in\T_L\). If \(\Theta_i\) denotes reflection through the mid-edge hyperplane orthogonal to \(e_i\), define
\[
F^{[t]}:=
\begin{cases}
\overline{\Theta_k\circ\cdots\circ\Theta_1(F)},&k\ \text{even},\\
\Theta_k\circ\cdots\circ\Theta_1(F),&k\ \text{odd}.
\end{cases}
\]
This function is path-independent and has domain \(\{t\}\).
\begin{prop}[Chessboard estimate]
\label{prop:chessboard}
Let \(F=(F_t)_{t\in\T_L}\) be a family of complex-valued functions on \(\Omega_s\), each with domain \(\{o\}\). Then
\begin{equation}\label{eq:chessboard}
\Bigg | \left\langle\prod_{t\in\T_L}F_t^{[t]}\right\rangle_{L,w,A} \Bigg | \, \, 
\leq\prod_{t\in\T_L}
\left\langle\prod_{s\in\T_L}F_t^{[s]}\right\rangle_{L,w,A}^{1/|\T_L|}.
\end{equation}
\end{prop}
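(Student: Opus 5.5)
The chessboard estimate follows the classical route of Fröhlich--Israel--Lieb--Simon; see \cite[Chapter~10]{Velenik}. The only input is the reflection positivity of Proposition~\ref{prop:reflectionpos}, in the form of the Cauchy--Schwarz inequality \eqref{eq:RPstatement}. Two points need care. First, the measure is complex, so we work with absolute values. Second, reflections act with complex conjugation, $\Theta S_x=\overline{S_{\Theta x}}$, and the definition of $F^{[t]}$ is designed to absorb this.

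\emph{Step 1 (one reflection).} Fix a mid-edge reflection $\Theta$ and split $\T_L=\T_L^+\dot\cup\T_L^-$. Write
\[
\prod_{t}F_t^{[t]}=\Big(\prod_{t\in\T_L^+}F_t^{[t]}\Big)\Big(\prod_{t\in\T_L^-}F_t^{[t]}\Big)=:f\cdot h.
\]
By construction, for $t\in\T_L^-$ we have $F^{[t]}=\overline{\Theta (F^{[\Theta t]})}$: one extra reflection flips the parity of the path length, and that adds a conjugation. Hence $h=\overline{\Theta g}$ with $g=\prod_{t\in\T_L^-}F_t^{[\Theta t]}\in\mathcal A^+$. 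Applying \eqref{eq:RPstatement} then gives
\[
|\langle f\overline{\Theta g}\rangle|^2\le \langle f\overline{\Theta f}\rangle\,\langle g\overline{\Theta g}\rangle .
\]
Each factor on the right is again of the form $\langle\prod_t \tilde F_t^{[t]}\rangle$ for a new family $\tilde F$. Reflecting the half-configuration $\{F_t\}_{t\in\T_L^+}$ fills the whole torus, and likewise for the other half. Both right-hand factors are nonnegative real, since $\langle f\overline{\Theta f}\rangle\ge0$. I also need path-independence of $F^{[t]}$, so that composing reflections is consistent. This holds because two consecutive reflections in parallel planes give a translation, with an even number of conjugations, and translation invariance of the measure then applies.

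\emph{Step 2 (iteration).} Define the functional $\Phi(\{F_t\}):=|\langle\prod_t F_t^{[t]}\rangle|$, viewed as a function of the assignment $t\mapsto F_t$. Step 1 says that for every reflection $\Theta$,
\[
\Phi(F)^2\le \Phi(F^{+,\Theta})\,\Phi(F^{-,\Theta}),
\]
where $F^{\pm,\Theta}$ are the reflection-symmetrised assignments. To reach the bound, I use the standard maximiser argument. Assume all $F_t$ are nonzero, otherwise the claim is trivial. Set $\Psi(F):=\Phi(F)/\prod_t\Phi(F_t\text{ everywhere})^{1/|\T_L|}$. The set of assignments is finite, since the labels are just the finitely many functions $F_t$ placed at sites, so $\Psi$ attains a maximum. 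Among maximisers, choose one with the largest number of sites carrying a single fixed label. The inequality from Step 1 is compatible with the normalisation because the denominator factorises evenly over the halves. So the reflected assignments are still maximisers, and one of them has strictly more sites with that label unless the assignment is already constant. For $L$ a power of $2$, a dyadic sequence of reflections makes this clean. For general even $L$, I would use the usual trick: the reflections generate enough symmetries to propagate a label from any site to a half-torus, and then to the whole torus. Once the maximiser is constant, $\Psi\le1$, which is \eqref{eq:chessboard}.

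\emph{Main obstacle.} The delicate part is bookkeeping of the conjugations. I must check that after each Cauchy--Schwarz step the new expectations are genuinely of the form $\langle\prod_s \tilde F_s^{[s]}\rangle$ with the same parity-dependent conjugation rule, and that the fully constant expressions $\langle\prod_s F_t^{[s]}\rangle$ are nonnegative reals, so that their $1/|\T_L|$ powers make sense. For the nonnegativity I would write such an expression as $\langle u\,\overline{\Theta u}\rangle$ with $u=\prod_{s\in\T_L^+}F_t^{[s]}$, which is $\ge0$ by Proposition~\ref{prop:reflectionpos}. For the parity rule I would induct on path length using \eqref{eq:complexalternation}.
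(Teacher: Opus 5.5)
\textbf{Verdict: genuine gap in Step~2.} Your Step~1 is correct, and it is exactly the input of the classical argument the paper cites. The identity $F^{[t]}=\overline{\Theta(F^{[\Theta t]})}$ for $t\in\T_L^-$ gives $\Phi(F)^2\le\Phi(F^{+,\Theta})\Phi(F^{-,\Theta})$. The constant expressions are of the form $\langle u\,\overline{\Theta u}\rangle\ge0$, and the normalisation splits evenly over the two halves.

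The problem is your monotone quantity. Symmetrising from $\T_L^{\pm}$ replaces the number $n$ of sites carrying a label $\ell$ by $2n_\pm$, where $n_++n_-=n$ are the counts in the two halves. A strict increase therefore needs some reflection with $n_+\neq n_-$, and this can fail for non-constant assignments:
\begin{itemize}
\item For $d=1$, $L=4$, the assignment $(\ell,a,\ell,b)$ has $n_+=n_-=1$ for both mid-edge reflections.
\item For $d\ge2$, every checkerboard assignment ($\ell$ on $\T_L^{\mathrm e}$, another label on $\T_L^{\mathrm o}$) is split evenly by every reflection. Each mid-edge reflection exchanges the parity classes, and each half is a union of layers containing equally many even and odd sites.
\end{itemize}
So the contradiction does not follow. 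The appeal to ``the usual trick'' for general even $L$ does not name a quantity that provably grows.

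\textbf{The fix.} Use the largest monochromatic box. If $\max\Psi=0$ there is nothing to prove. Otherwise, among maximisers of $\Psi$ choose one containing a box $B=\prod_{i=1}^d\{a_i,\dots,a_i+k_i-1\}$ (indices mod $L$) of constant label and maximal volume. Suppose $k_i<L$ for some $i$. Reflect through the mid-edge plane $x_i=a_i-\tfrac12$ and symmetrise from the half $\{x:x_i\in\{a_i,\dots,a_i+L/2-1\}\}$. Since $\Psi(g)>0$, the inequality $\Psi(g)^2\le\Psi(g^{+,\Theta})\Psi(g^{-,\Theta})$ together with maximality forces the new assignment to be a maximiser. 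It contains a monochromatic box with $i$-th side $\min\{2k_i,L\}>k_i$, a contradiction. Hence a constant assignment maximises $\Psi$, so $\Psi\le1$ for every even $L$, and the dyadic special case is not needed.

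\textbf{A second, smaller gap: the degenerate case.} Defining $\Psi$ requires $\langle\prod_sF_t^{[s]}\rangle>0$ for every $t$, not $F_t\neq0$. In this model the former can fail for non-zero $F_t$. For $F_t=(\overline{S_o^1})^3$ one has $F_t^{[s]}=(S_s^1)^3$, and $\langle\prod_s(S^1_s)^3\rangle=0$ by the local constraint in Proposition~\ref{prop:conversion}. You must then show that the left-hand side vanishes, which is not trivial. The same box argument does it. Suppose some assignment $g$ containing a degenerate label had $\Phi(g)>0$, and take one with a largest monochromatic box of a degenerate label. The Cauchy--Schwarz step gives $\Phi(g^{\pm,\Theta})>0$, so the box can be doubled until $g$ is constant with a degenerate label, contradicting $\Phi(g)>0$.

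A minor point: path-independence of $F^{[t]}$ is an identity between functions. It holds because any composition of mid-edge reflections fixing $o$ is the identity; translation invariance of the measure plays no role there.
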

\begin{proof}
This is the classical chessboard argument; see \cite[Chapter~10]{Velenik}.
\end{proof}

\subsection{Gaussian domination}
Let \(w:\T_L\times\T_L\to[0,\infty)\) be the periodisation of an admissible weight. For \(h\in\mathbb R^{\T_L}\), set
\[
(\Delta h)_x:=\sum_{y\in\T_L}w_{x,y}(h_y-h_x)
\]
and
\begin{equation}\label{eq:centralquantity}
Z_A(h):=\int_{\Omega_s}\boldsymbol{d s}\,\boldsymbol\gamma^A(\boldsymbol s)\exp\Bigg\{-\frac12\sum_{\{i,j\}\in E}\sum_{k=1}^2w_{i,j}\Big[\bigl(S_i^k+h_i\delta_{k,1}-S_j^k-h_j\delta_{k,1}\bigr)^2-(S_i^k)^2-(S_j^k)^2\Big]\Bigg\}.
\end{equation}
Thus \(Z_A(0)=Z^{\mathrm{spin}}_{L,w,A}\).
\begin{prop}[Gaussian domination]
\label{prop:gaussian domination}
For every \(h\in\mathbb R^{\T_L}\),
\begin{equation}\label{prop:GaussianDomination}
Z_A(h)\leq Z_A(0).
\end{equation}
\end{prop}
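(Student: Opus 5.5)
We follow the classical route of Fröhlich--Simon--Spencer, adapted to the complex measure with Proposition~\ref{prop:reflectionpos} playing the role of reflection positivity. First we rewrite $Z_A(h)$. Expanding the square, the quadratic spin terms cancel against $-(S_i^k)^2-(S_j^k)^2$. For $k=2$ what remains is exactly $w_{i,j}S_i^2S_j^2$. For $k=1$ we get
\[
w_{i,j}S_i^1S_j^1-w_{i,j}(S_i^1-S_j^1)(h_i-h_j)-\tfrac12 w_{i,j}(h_i-h_j)^2 .
\]
Hence
\[
Z_A(h)=\Big\langle \prod_{x}e^{\,\text{(linear in }h_x)}\cdot e^{H^\pm\text{-type terms}}\cdot \prod_{\{i,j\}}e^{-\frac12 w_{i,j}[(S_i^1+h_i)-(S_j^1+h_j)]^2+\dots}\Big\rangle_{1,A}.
\]
That is, $Z_A(h)$ is the spin integral with the Hamiltonian $H$ evaluated at the shifted field $S^1+h$, plus the harmless local terms $-\tfrac12 w_{ii}$-free diagonal corrections. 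Crucially, $Z_A(h)$ has the same structure as $Z^{\mathrm{spin}}$: a $\gamma$-weighted product measure times $e^{H_h}$. Here $H_h$ splits as $H_h^++H_h^-+H_h^R$. The cross term is
\[
H^R_h=\sum_{x,z\in\T_L^+}w^{(L)}_{x,\Theta z}\big[(S^1_x+h_x)\overline{\Theta(S^1_z+h_z')}+S_x^2\overline{\Theta S_z^2}\big]-\tfrac12\sum w^{(L)}_{x,\Theta z}(h_x-h_{\Theta z})^2 ,
\]
up to purely $\pm$-local quadratic pieces in $h$ that we absorb into $H^\pm_h$.

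The key step is a Cauchy--Schwarz inequality in $h$. Write $h=(h^+,h^-)$ with $h^-$ the restriction to $\T_L^-$, and let $\Theta h$ denote the reflected field. We aim to show
\[
Z_A(h)^2\le Z_A(h^+,\Theta h^+)\,Z_A(\Theta h^-,h^-).
\]
To get it, expand $\exp\{-\tfrac12 w(h_x-h_{\Theta z})^2\}$ cross terms. We use the identity $-\tfrac12(a-b)^2=-\tfrac12a^2-\tfrac12b^2+ab$, so the $h$-cross term becomes $\sum K(x,z)h_xh_{\Theta z}$. Since $K(x,z)=w^{(L)}_{x,\Theta z}$ is positive semidefinite by Lemma~\ref{lem:OS-periodisation}, we diagonalise it exactly as in the proof of Proposition~\ref{prop:reflectionpos}. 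We combine the shifted variables $S^1_x+h_x$ into $B_\alpha^{1}(h)=\sum_x\varphi_\alpha(x)(S^1_x+h_x)$, and note that $h$ is real, so $\overline{\Theta h}=\Theta h$ is compatible with the conjugation structure \eqref{eq:complexalternation}. Expanding $e^{H^R_h}$ in power series then writes $Z_A(h)$ as a sum
\[
\sum_{\boldsymbol m}c_{\boldsymbol m}\langle F_{\boldsymbol m}(h^+)\overline{\Theta G_{\boldsymbol m}(\Theta h^-)}\rangle_{1,A},
\]
with $c_{\boldsymbol m}\ge0$ and $F_{\boldsymbol m},G_{\boldsymbol m}\in\mathcal A^+$. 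By reflection positivity of $\langle\cdot\rangle_{1,A}$ and Cauchy--Schwarz, first termwise and then for the sum over $\boldsymbol m$, this gives the displayed inequality. In particular $Z_A(h)\ge0$ is real.

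Conclusion then follows the standard maximiser argument. $Z_A$ is continuous in $h$, and bounded above because $|\gamma^A|$ and $|e^{H}|$ are bounded while the Gaussian factor $e^{-\frac12\sum w(h_i-h_j)^2}$ is at most $1$. $Z_A$ is also invariant under constant shifts $h\mapsto h+c$. So we work on the compact quotient modulo constants, after noting that $Z_A\to0$ as $\sum w_{ij}(h_i-h_j)^2\to\infty$ by irreducibility of $w$. Hence a maximiser exists. Among maximisers we choose one minimising the number of pairs $\{x,y\}$ with $w^{(L)}_{x,y}>0$ and $h_x\ne h_y$. Applying the Cauchy--Schwarz inequality across a reflection plane separating such a pair produces two maximisers, one of which has strictly fewer disagreeing pairs, which is a contradiction. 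The alternative is iterating reflections in all directions, chessboard-style as in Proposition~\ref{prop:chessboard}, until $h$ is constant. Constant $h$ gives $Z_A(0)$.

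The main obstacle is the first step with long-range edges. We must check that every $h$-dependent cross term across $\mathcal S$, including the mixed terms $S^1_xh_{\Theta z}$ and $h_xS^1_{\Theta z}$, assembles into the single positive-semidefinite form $\sum K(x,z)(S^1_x+h_x)\overline{(\Theta(S^1_z+h_z))}$. This requires that conjugation acts trivially on real $h$ but flips the spin as in \eqref{eq:complexalternation}. The parity convention of the spins may force us to shift $S^1+h$ only consistently with this conjugation. If the signs fail to match, we would instead shift by $h$ on $S^1$ at even sites and $\bar{}$-compatible on odd sites.
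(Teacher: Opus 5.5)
\textbf{Verdict: the reflection step is correct, but there is a genuine gap in the termination step of your maximiser argument.}

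Your reflection step is sound and is essentially the paper's Lemma~\ref{lem:Zh+-}. Write the $k=1$ part of the exponent as $\sum_{\{i,j\}}w_{i,j}(S^1_i+h_i)(S^1_j+h_j)$ minus single-site terms; no separate $-\tfrac12(h_x-h_{\Theta z})^2$ survives in the cross term. Since $h$ is real and $S^1_{\Theta z}=\overline{\Theta S^1_z}$, the cross term is exactly $\sum_\alpha\lambda_\alpha B^1_{\alpha,h}\,\overline{\Theta B^1_{\alpha,h\circ\Theta}}$. So the sign problem you worry about in your last paragraph does not arise. Three smaller points:
\begin{itemize}
\item Reality of $Z_A(h)$ comes from the substitution $\boldsymbol s\mapsto-\boldsymbol s$, not from Cauchy--Schwarz.
\item Nonnegativity of $Z_A(h)$ is neither shown nor needed.
\item Decay at infinity requires bounding the linear term $\sum_x S^1_x(\Delta h)_x$ by $C\sqrt{Q(h)}$, where $Q(h):=\sum_{\{x,y\}}w_{x,y}(h_x-h_y)^2$; the Gaussian factor alone is not enough.
\end{itemize}

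\textbf{The gap.} You minimise $N_w(h):=\#\{\{x,y\}:w^{(L)}_{x,y}>0,\ h_x\neq h_y\}$ and claim that reflecting across a plane separating a disagreeing pair produces a maximiser with strictly smaller $N_w$. For long-range weights this is false. Reflection turns a crossing pair $\{x,\Theta z\}$ with $z\neq x$ into a copy of the pair $\{x,z\}$ inside one half, and that pair may disagree. So the reflected fields can gain as many disagreements as they lose.

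Concretely, suppose $w^{(L)}_{x,y}>0$ for all $x\neq y$, as for the periodised Examples~(C)--(E). Then $N_w(h)=\sum_{v<v'}n_vn_{v'}$ with $n_v:=\#\{x:h_x=v\}$. The field $h^{\pm,\Theta}$ takes the value $v$ at $2n^\pm_v$ sites, where $n^\pm_v$ counts the sites of $\T_L^\pm$ carrying $v$. Hence if $n^+_v=n^-_v$ for all $v$, both reflected fields have the same value counts as $h$, and $N_w(h^{\pm,\Theta})=N_w(h)$. This happens for \emph{every} mid-edge reflection in two cases:
\begin{itemize}
\item $d\geq2$ and $h_x=(-1)^{|x|_1}$: each half is a union of $L/2$ full layers, each containing equally many even and odd sites.
\item $d=1$, $L=4$ and $h=(1,-1,1,-1)$.
\end{itemize}
So your argument cannot exclude a non-constant maximiser of this kind.

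\textbf{The fix,} which is what the paper does, is to count only \emph{nearest-neighbour} disagreements $N(h)$, independently of the weights. The Cauchy--Schwarz inequality holds for every mid-edge reflection, whichever pairs carry weight. Reflect through the midpoint of a nearest-neighbour edge on which $h^\star$ jumps. The nearest-neighbour edges crossing the two reflection planes are exactly those of the form $\{x,\Theta x\}$, and both $h^{\pm,\Theta}$ are continuous across them. Therefore $N(h^{+,\Theta})+N(h^{-,\Theta})=2\bigl(N(h^\star)-N^R(h^\star)\bigr)<2N(h^\star)$, where $N^R\geq1$ counts the disagreeing crossing edges. Finally, $N(h)=0$ forces $h$ to be constant, because the nearest-neighbour torus graph is connected.

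Your chessboard-style fallback is only asserted, not carried out. Proposition~\ref{prop:chessboard} does not by itself yield $Z_A(h)\leq Z_A(0)$, so the fallback does not close the gap either.
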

The proof follows from reflection positivity as in \cite[Section~10.5.3]{Velenik}; details are given in the appendix.

\begin{prop}
\label{prop:keyineq}
For every \(h\in\mathbb R^{\T_L}\),
\begin{equation}\label{eq:keyinequality}
\sum_{x,y\in\T_L}\langle S_x^1S_y^1\rangle_{L,w,A}
(\Delta h)_x(\Delta h)_y
\leq\sum_{\{x,y\}\in E}w_{x,y}(h_y-h_x)^2.
\end{equation}
\end{prop}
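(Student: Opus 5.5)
We derive \eqref{eq:keyinequality} from Proposition~\ref{prop:gaussian domination} by a second-order expansion around $h=0$. Replace $h$ by $\varepsilon h$ with $\varepsilon\in\mathbb R$ small. Since $Z_A(\varepsilon h)\le Z_A(0)$ for all $\varepsilon$, the function $\varepsilon\mapsto Z_A(\varepsilon h)$ attains a maximum at $\varepsilon=0$. It is smooth, because the integrand is an entire function of $\varepsilon$ and $\Omega_s$ is compact. Hence the first derivative at $0$ vanishes and the second derivative is $\le 0$.

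First we expand the exponent of \eqref{eq:centralquantity}. Using $(a+b)^2-a^2-c^2$ bookkeeping, for each edge and $k=1$ the bracket equals
\[
-2S_i^1S_j^1+2\varepsilon(h_i-h_j)(S_i^1-S_j^1)+\varepsilon^2(h_i-h_j)^2 .
\]
For $k=2$ the bracket is simply $-2S_i^2S_j^2$. Multiplying by $-\tfrac12$ and summing over edges, the exponent becomes
\[
H(\boldsymbol s)-\varepsilon\sum_{\{i,j\}\in E}w_{i,j}(h_i-h_j)(S_i^1-S_j^1)-\frac{\varepsilon^2}{2}\sum_{\{i,j\}\in E}w_{i,j}(h_i-h_j)^2 .
\]
We then rewrite the linear term by summation by parts over ordered pairs, using the symmetry of $w$:
\[
\sum_{\{i,j\}\in E}w_{i,j}(h_i-h_j)(S_i^1-S_j^1)=\sum_{x}S_x^1\sum_y w_{x,y}(h_x-h_y)=-\sum_x S_x^1(\Delta h)_x=:-T.
\]
Thus $Z_A(\varepsilon h)/Z_A(0)=\langle e^{\varepsilon T}\rangle_{L,w,A}\,e^{-\frac{\varepsilon^2}{2}Q}$, where $Q:=\sum_{\{x,y\}\in E}w_{x,y}(h_y-h_x)^2$. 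Diagonal terms $x=y$ contribute nothing, since $h_x-h_x=0$.

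Expanding to second order gives
\[
1+\varepsilon\langle T\rangle+\tfrac{\varepsilon^2}{2}\bigl(\langle T^2\rangle-Q\bigr)+O(\varepsilon^3)\le 1 .
\]
Since this holds for both signs of $\varepsilon$, the linear coefficient $\langle T\rangle$ must vanish; if it is complex, its real part vanishes, and one can argue with real parts throughout. Then $\langle T^2\rangle\le Q$, and $\langle T^2\rangle=\sum_{x,y}\langle S_x^1S_y^1\rangle(\Delta h)_x(\Delta h)_y$, which is exactly \eqref{eq:keyinequality}.

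The main subtlety is that the measure is complex, so "$\le$" must be read for the real quantity $Z_A(\varepsilon h)$. Proposition~\ref{prop:gaussian domination} implicitly asserts that $Z_A(h)$ is real. I would check this directly: the complex conjugation combined with the symmetry $s\mapsto -s$, which maps $S\mapsto\overline S$ and $\gamma^A\mapsto\overline{\gamma^A}$ and preserves $ds$, shows that $Z_A(h)$ is real for real $h$. With $Z_A(\varepsilon h)$ real, the Taylor coefficients are real. The second-order coefficient is then real, and it equals $\langle T^2\rangle-Q$ after absorbing the vanishing first-order term. This yields the claim, with $\langle T^2\rangle$ automatically real.
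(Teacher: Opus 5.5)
Your proposal is correct and follows essentially the same route as the paper. Gaussian domination makes $\varepsilon\mapsto Z_A(\varepsilon h)$ maximal at $0$, so $Z_A''(0)/Z_A(0)=\langle T^2\rangle-Q\le 0$, and the summation-by-parts identity rewrites the linear term as $-\sum_x S_x^1(\Delta h)_x$; your check that $Z_A(h)$ is real via $\boldsymbol s\mapsto-\boldsymbol s$ is the same argument the paper uses in its proof of Gaussian domination.
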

\begin{proof}
By Proposition~\ref{prop:gaussian domination}, \(\varphi\mapsto Z_A(\varphi h)\) has a maximum at zero. Hence
\begin{equation}\label{eq:Zsecond_eng}
\begin{aligned}
0\geq\frac{Z_A''(0)}{Z_A(0)}
={}&-\sum_{\{i,j\}\in E}w_{i,j}(h_i-h_j)^2\\
&+\left\langle\left(
\sum_{\{i,j\}\in E}w_{i,j}(S_i^1-S_j^1)(h_i-h_j)
\right)^2\right\rangle_{L,w,A}.
\end{aligned}
\end{equation}
Since
\begin{equation}\label{eq:sum-id_eng}
\sum_{\{x,y\}\in E}w_{x,y}(S_x^1-S_y^1)(h_x-h_y)
=-\sum_{x\in\T_L}S_x^1(\Delta h)_x,
\end{equation}
expanding the square in \eqref{eq:Zsecond_eng} gives
\eqref{eq:keyinequality}.
\end{proof}

\subsection{Infrared bound}
\begin{defn}[Fourier transform and dual torus]
For \(f:\T_L\to\mathbb C\) and
\(k\in\T_L^*\), define
\begin{equation}\label{eq:def_Ghat}
\widehat f(k):=\sum_{x\in\T_L}e^{\mathrm i k\cdot x}f(x).
\end{equation}
\end{defn}
\begin{prop}[Infrared bound]
\label{prop:infrared}
Suppose that the weights are OS-positive. 
 Define
\begin{equation}\label{eq:def_Dhat}
\varepsilon(k):=\sum_{z:\{o,z\}\in E}
w_{o,z}\bigl(1-\cos(k\cdot z)\bigr).
\end{equation}
Then, for every \(k\in\T_L^*\setminus\{0\}\),
\[
\widehat G^A_{L,w}(k)\leq\frac1{\varepsilon(k)}.
\]
\end{prop}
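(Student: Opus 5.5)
We derive the infrared bound from Proposition~\ref{prop:keyineq} by choosing the test field $h$ as a single Fourier mode, and then identify the spin correlation with the two-point function $G^A_{L,w}$.

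\textbf{Step 1: extend the key inequality to complex fields.} Proposition~\ref{prop:keyineq} is stated for real $h$. By translation invariance on $\T_L$, the matrix $\langle S^1_xS^1_y\rangle_{L,w,A}$ depends only on $x-y$. We will also need it to be real and symmetric, which should follow from the fact that its entries are two-point functions of the loop model; this is checked in Step~3. Given this, apply \eqref{eq:keyinequality} separately to $h=\cos(k\cdot\,\cdot)$ and to $h=\sin(k\cdot\,\cdot)$ and add the two inequalities. Since the kernel is real and symmetric, the cross terms cancel. The result is the same inequality for $h_x=e^{\mathrm i k\cdot x}$, with $(\Delta h)_x(\Delta h)_y$ replaced by $\overline{(\Delta h)_x}(\Delta h)_y$ and $(h_y-h_x)^2$ replaced by $|h_y-h_x|^2$.

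\textbf{Step 2: evaluate both sides for a plane wave.} For $h_x=e^{\mathrm i k\cdot x}$, symmetry of $w$ gives
\[
(\Delta h)_x=\sum_z w_{o,z}\bigl(e^{\mathrm i k\cdot z}-1\bigr)e^{\mathrm i k\cdot x}=-\varepsilon(k)\,e^{\mathrm i k\cdot x}.
\]
Hence the left-hand side equals $\varepsilon(k)^2\sum_{x,y}\langle S^1_xS^1_y\rangle e^{\mathrm i k\cdot(y-x)}=\varepsilon(k)^2|\T_L|\,\widehat{\mathcal G}(k)$, where $\mathcal G(x):=\langle S^1_oS^1_x\rangle_{L,w,A}$. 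The right-hand side is
\[
\sum_{\{x,y\}}w_{x,y}\,|e^{\mathrm i k\cdot y}-e^{\mathrm i k\cdot x}|^2=\tfrac12\sum_x\sum_z w_{o,z}\,2\bigl(1-\cos(k\cdot z)\bigr)=|\T_L|\,\varepsilon(k).
\]
Here we use that the self-loop terms $z=o$ contribute zero on both sides. For $k\neq0$ with $\varepsilon(k)>0$, dividing gives $\widehat{\mathcal G}(k)\le 1/\varepsilon(k)$. Irreducibility guarantees $\varepsilon(k)>0$ for $k\in\T_L^*\setminus\{0\}$, and periodisation does not change $\varepsilon$ on the dual torus.

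\textbf{Step 3: identify $\mathcal G$ with $G^A_{L,w}$.} This is the main obstacle, and it is really the content of Section~\ref{sect:fromspinstodimers}. The plan is to expand $e^{H}$ and $\boldsymbol\gamma^A$ and integrate out the angles. A monomial survives only if, at each vertex and for each component, the net power of $e^{\mathrm i s^k_z}$ vanishes.

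\emph{Partition function.} The compensating factor $\overline{S^1_z}\,\overline{S^2_z}$ (or the squares, in the RLP case) forces each vertex either to be a monomer, weighted by $w_{z,z}$, or to carry exactly one edge of each colour. The alternating convention \eqref{eq:spinvariable} is what makes this work. This yields $Z^{\mathrm{spin}}=c\,Z^A$ up to combinatorial factors, which should cancel in the ratio.

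\emph{Two-point function.} Inserting $S^1_oS^1_x$ produces exactly the defect constraints defining $\Omega_{o,x}$ (respectively $\Omega^{\mathrm{per}}_{o,x}$). Hence $\mathcal G(x)=G^A_{L,w}(x)$ for $x\neq o$; this simultaneously shows that $\mathcal G$ is real and non-negative, as required in Step~1.

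\emph{Diagonal term.} The case $x=o$ needs separate care, because $\langle (S^1_o)^2\rangle$ need not match the convention $G^{\mathrm{mdd}}(o,o)=0$. I would show that this term is non-negative, so that replacing it by $0$ only decreases $\widehat{\mathcal G}(k)$. Alternatively, one can absorb it into the left-hand side before concluding.

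This diagonal bookkeeping, together with the sign and normalisation checks in the spin-to-loop expansion, is where I expect the real work to lie. The Fourier computation in Steps~1 and~2 is routine once Proposition~\ref{prop:keyineq} is available.
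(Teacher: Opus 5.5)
Your proposal is correct and matches the paper's proof: apply Proposition~\ref{prop:keyineq} to $x\mapsto\cos(k\cdot x)$ and $x\mapsto\sin(k\cdot x)$, add the two inequalities, and use translation invariance and symmetry to obtain $\varepsilon(k)^2|\T_L|\,\widehat G^A_{L,w}(k)\le\varepsilon(k)|\T_L|$. Your Step~3 is exactly Proposition~\ref{prop:conversion} and Corollary~\ref{cor:twopoint-spin}, which give $Z^{\mathrm{spin}}_{L,w,A}=Z^A_{L,w}$ with no extra combinatorial factor and also cover $x=y$, so no diagonal correction is needed: $\langle (S^1_o)^2\rangle_{L,w,\mathrm{mdd}}=0$ because the local constraint at $o$ cannot be met, and $\langle (S^1_o)^2\rangle_{L,w,\mathrm{per}}=G^{\mathrm{per}}_{L,w}(o,o)$.
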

\begin{proof}
Fix \(k\in\T_L^*\setminus\{0\}\). Since
\[
\Delta\cos(k\cdot x)=-\varepsilon(k)\cos(k\cdot x),\qquad
\Delta\sin(k\cdot x)=-\varepsilon(k)\sin(k\cdot x),
\]
and
\[
\begin{aligned}
&\sum_{\{x,y\}\in E}w_{x,y}\Big[(\cos(k\cdot y)-\cos(k\cdot x))^2+(\sin(k\cdot y)-\sin(k\cdot x))^2\Big]
\\
&=\varepsilon(k)\sum_{x\in\T_L}\big[\cos^2(k\cdot x)+\sin^2(k\cdot x)\big]
\\
&=\varepsilon(k)|\T_L|,
\end{aligned}
\]
applying Proposition~\ref{prop:keyineq} to \(x\mapsto\cos(k\cdot x)\) and \(x\mapsto\sin(k\cdot x)\) and adding gives
\[
\begin{aligned}
&\varepsilon(k)^2\sum_{x,y\in\T_L}G^A_{L,w}(x,y)
\big[\cos(k\cdot x)\cos(k\cdot y)+\sin(k\cdot x)\sin(k\cdot y)\big]
\\
&=\varepsilon(k)^2\sum_{x,y\in\T_L}G^A_{L,w}(x,y)\cos\bigl(k\cdot(x-y)\bigr)
\leq\varepsilon(k)|\T_L|.
\end{aligned}
\]
By translation invariance and symmetry,
\[
\sum_{x,y\in\T_L}G^A_{L,w}(x,y)\cos\bigl(k\cdot(x-y)\bigr)
=|\T_L|\widehat G^A_{L,w}(k).
\]
Dividing by \(\varepsilon(k)^2|\T_L|\) proves the claim.
\end{proof}

\section{From spins to dimers}
\label{sect:fromspinstodimers}
We now identify the spin systems of Section~\ref{sect:spinsystem} with
the monomer double-dimer and random lattice-permutation models. The same
expansion applies to both models; only the local constraint is different.

Let \(G_{\mathrm{en}}=(V_{\mathrm{en}},E_{\mathrm{en}})\), where
\[
V_{\mathrm{en}}:=V\cup\{s\},
\qquad
E_{\mathrm{en}}
:=
E\cup\bigl\{\{x,s\}:x\in V\bigr\}.
\]
For
\[
m=(m^1,m^2)\in
\widehat\Omega_{\mathrm{en}}
:=
\bigl(\mathbb N_0^{E_{\mathrm{en}}}\bigr)^2,
\]
set
\[
n_x^i(m)
:=
\sum_{y:\{x,y\}\in E_{\mathrm{en}}}m^i_{x,y},
\qquad
\partial m
:=
\bigl(m^1_{x,s},m^2_{x,s}\bigr)_{x\in V}.
\]
We introduce the sets of admissible local times
\[
\mathcal Q_{\mathrm{mdd}}
:=
\{(0,0),(1,1)\},
\qquad
\mathcal Q_{\mathrm{per}}
:=
\{(0,0),(2,0),(0,2)\}.
\]
Thus, in the monomer double-dimer model, every non-monomer is incident
to one dimer of each colour, whereas in the random lattice-permutation
model it is incident to two dimers of the same colour.

For \(A\in\{\mathrm{mdd},\mathrm{per}\}\) and
\(u^1,u^2\in\mathbb N_0^V\), let
\[
\Omega_{\mathrm{en}}^A(u^1,u^2)
:=
\left\{
m\in\widehat\Omega_{\mathrm{en}}:
\partial m=(u^1,u^2),
\quad
\bigl(n_x^1(m),n_x^2(m)\bigr)\in\mathcal Q_A
\ \text{for every }x\in V
\right\}.
\]
For \(m\in\Omega_{\mathrm{en}}^A(u^1,u^2)\), set
\begin{equation}\label{eq:enlarged-weight}
\mu_{G,w}^{A,u}(m)
:=
\prod_{x\in V}
w_{x,x}^{\mathbf 1_{\{(n_x^1(m),n_x^2(m))=(0,0)\}}}
\prod_{\{x,y\}\in E}
\frac{
w_{x,y}^{m^1_{x,y}+m^2_{x,y}}
}{
m^1_{x,y}!\,m^2_{x,y}!
}.
\end{equation}
Source-edges carry no weight. We use the same notation for the total
weight of a set of configurations.

\begin{prop}[From spin correlations to dimers]
\label{prop:conversion}
Let \(A\in\{\mathrm{mdd},\mathrm{per}\}\) and
\(u^1,u^2\in\mathbb N_0^V\). Then
\begin{equation}\label{eq:partition-spin-dimer}
Z^{\mathrm{spin}}_{G,w,A}=Z^A_{G,w},
\end{equation}
and
\begin{equation}\label{eq:correlation}
\left\langle
\prod_{x\in V}
(S_x^1)^{u_x^1}(S_x^2)^{u_x^2}
\right\rangle_{G,w,A}
=
\frac{
\mu_{G,w}^{A,u}
\bigl(\Omega_{\mathrm{en}}^A(u^1,u^2)\bigr)
}{
Z^A_{G,w}
}.
\end{equation}
\end{prop}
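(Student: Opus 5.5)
The plan is to expand $e^{H}$ edge by edge and colour by colour, and then integrate out the angles using Fourier orthogonality on $[0,2\pi)$. Write
\[
e^{H}=\prod_{\{x,y\}\in E}\prod_{k=1}^2\sum_{m^k_{x,y}\ge0}\frac{w_{x,y}^{m^k_{x,y}}}{m^k_{x,y}!}\bigl(S^k_xS^k_y\bigr)^{m^k_{x,y}} .
\]
Multiply by $\prod_x (S^1_x)^{u^1_x}(S^2_x)^{u^2_x}$, and encode the inserted powers as multiplicities $m^k_{x,s}=u^k_x$ on the source edges of $G_{\mathrm{en}}$; these carry weight one and no factorial. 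Each term is then indexed by $m\in\widehat\Omega_{\mathrm{en}}$ with $\partial m=(u^1,u^2)$. Its spin part factorises over vertices as $\prod_z (S^1_z)^{n^1_z(m)}(S^2_z)^{n^2_z(m)}$, because every colour-$k$ edge, including source edges, contributes exactly one factor $S^k_z$ at each of its endpoints. Since the sums are absolutely convergent on a compact space, the expansion can be exchanged with the integral.

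Next I integrate vertex by vertex against $\gamma^A_z$. Take $z\in V^{\mathrm e}$; the odd case is identical with all signs flipped. The integrand at $z$ is $e^{\mathrm i(n^1s^1-n^2s^2)}\gamma^A_z$, and $\frac{1}{(2\pi)^2}\int e^{\mathrm i(as^1+bs^2)}\,ds=\delta_{a,0}\delta_{b,0}$. For $A=\mathrm{mdd}$, the term $w_{z,z}$ survives only if $(n^1,n^2)=(0,0)$, and the term $\overline{S^1_z}\,\overline{S^2_z}=e^{-\mathrm i s^1+\mathrm i s^2}$ survives only if $(n^1,n^2)=(1,1)$, where it contributes $1$. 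For $A=\mathrm{per}$, the terms $w_{z,z}$, $(\overline{S^1_z})^2$ and $(\overline{S^2_z})^2$ select $(0,0)$, $(2,0)$ and $(0,2)$ respectively. The vertex integral is therefore $w_{z,z}^{\mathbf 1\{(0,0)\}}\mathbf 1\{(n^1_z,n^2_z)\in\mathcal Q_A\}$. Summing over $m$ shows that the unnormalised correlation equals $\mu^{A,u}_{G,w}(\Omega^A_{\mathrm{en}}(u^1,u^2))$. Taking $u=0$ gives the same identity for $Z^{\mathrm{spin}}_{G,w,A}$, so \eqref{eq:correlation} follows once \eqref{eq:partition-spin-dimer} is proved.

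It remains to identify $\mu^{A,0}_{G,w}(\Omega^A_{\mathrm{en}}(0,0))$ with $Z^A_{G,w}$.

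For $\mathrm{mdd}$, the local times satisfy $n^k_x\le1$, so all multiplicities are in $\{0,1\}$ and the factorials equal $1$. The configurations are then exactly $\Omega$, with weight $\prod_x w_{x,x}^{1-n_x}\prod w^{m^1+m^2}$, which is $Z^{\mathrm{mdd}}_{G,w}$.

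For $\mathrm{per}$, every non-fixed vertex has degree $2$ in a single colour. Each configuration is therefore a set of fixed points together with vertex-disjoint monochromatic cycles, where a doubled edge with $m^k_{x,y}=2$ counts as a $2$-cycle. I then match these against permutations:
\begin{itemize}
\item a fixed point has weight $w_{z,z}$ on both sides;
\item a transposition $(x\,y)$ has weight $w_{x,y}^2$ and corresponds to a doubled edge of either colour, each with weight $w_{x,y}^2/2!$, which gives $w_{x,y}^2$ in total;
\item an undirected cycle of length $\ge3$ has weight $\prod w$ in either colour, and corresponds to exactly two oriented permutation cycles, each with that same weight by symmetry of $w$.
\end{itemize}
Assigning colour $1$ to one orientation and colour $2$ to the other gives a weight-preserving bijection, hence $Z^{\mathrm{per}}_{G,w}$.

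I expect this last step to be the main obstacle. The combinatorics must make the factor $1/m!$ on doubled edges and the twofold colour multiplicity exactly cancel the orientation count: $2$-cycles have only one orientation but two colours with $1/2!$, while longer cycles have two orientations and two colours with no factorial. I will also check that odd cycles of non-bipartite weights cause no trouble; this holds because the constraint is purely on local times.
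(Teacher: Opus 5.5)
Your proposal is correct and follows the paper's proof essentially step by step. Both arguments expand $e^{H}$ per edge and colour, encode the insertions as source-edge multiplicities, and use angular orthogonality to reduce each vertex integral to the constraint $\mathcal Q_A$ with weight $w_{z,z}$ at $(0,0)$. Both then identify $Z^{\mathrm{per}}$ component-wise: each cycle of length at least three has two colours matching its two orientations, and each doubled edge has two colours of weight $w_{x,y}^2/2!$.

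One wording point: the last step is a weight-preserving correspondence between total weights rather than a literal bijection, because each transposition corresponds to two coloured configurations of half its weight. Your bullet list already accounts for this.
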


\begin{proof}
For \(k_1,k_2,n_1,n_2\in\mathbb N_0\), the definition of the spin
variables gives
\begin{equation}\label{eq:angular-orthogonality}
\frac{1}{(2\pi)^2}
\int_{\Xi}
(\overline{S_x^1})^{k_1}(S_x^1)^{n_1}
(\overline{S_x^2})^{k_2}(S_x^2)^{n_2}
\,ds_x^1\,ds_x^2
=
\mathbf 1_{\{k_1=n_1\}}
\mathbf 1_{\{k_2=n_2\}}.
\end{equation}
Consequently, for every \(a,b\in\mathbb N_0\),
\begin{equation}\label{eq:local-integral-mdd}
\int_{\Xi}
\gamma_x^{\mathrm{mdd}}(\boldsymbol s)
(S_x^1)^a(S_x^2)^b\,ds_x
=
\begin{cases}
w_{x,x}, & (a,b)=(0,0),\\
1,       & (a,b)=(1,1),\\
0,       & \text{otherwise},
\end{cases}
\end{equation}
whereas
\begin{equation}\label{eq:local-integral-per}
\int_{\Xi}
\gamma_x^{\mathrm{per}}(\boldsymbol s)
(S_x^1)^a(S_x^2)^b\,ds_x
=
\begin{cases}
w_{x,x}, & (a,b)=(0,0),\\
1,       & (a,b)\in\{(2,0),(0,2)\},\\
0,       & \text{otherwise}.
\end{cases}
\end{equation}
The local integral can be non-zero only when
\((a,b)\in\mathcal Q_A\); within this set, its value is \(w_{x,x}\) when
\((a,b)=(0,0)\), and \(1\) otherwise.

We expand the exponential of the Hamiltonian as
\begin{align}
e^{H(\boldsymbol s)}
&=
\prod_{i=1}^2
\prod_{\{x,y\}\in E}
\sum_{k=0}^{\infty}
\frac{
w_{x,y}^k(S_x^iS_y^i)^k
}{k!}
\nonumber\\
&=
\sum_{q^1,q^2\in\mathbb N_0^E}
\left(
\prod_{\{x,y\}\in E}
\frac{
w_{x,y}^{q^1_{x,y}+q^2_{x,y}}
}{
q^1_{x,y}!\,q^2_{x,y}!
}
\right)
\prod_{x\in V}\prod_{i=1}^2
(S_x^i)^{n_x^i(q)},
\label{eq:H-expansion}
\end{align}
where
\[
n_x^i(q)
:=
\sum_{y:\{x,y\}\in E}q^i_{x,y}.
\]
Since \(G\) is finite, the series is absolutely convergent and may be
integrated term by term. Substituting~\eqref{eq:H-expansion} into the
unnormalized spin correlation gives
\begin{align}
&\int_{\Omega_s}
\boldsymbol{ds}\,
\boldsymbol\gamma^A(\boldsymbol s)
e^{H(\boldsymbol s)}
\prod_{x\in V}
(S_x^1)^{u_x^1}(S_x^2)^{u_x^2}
\nonumber\\
&\quad =
\sum_{q^1,q^2\in\mathbb N_0^E}
\left(
\prod_{\{x,y\}\in E}
\frac{
w_{x,y}^{q^1_{x,y}+q^2_{x,y}}
}{
q^1_{x,y}!\,q^2_{x,y}!
}
\right)
\prod_{x\in V}
\int_{\Xi}
\gamma_x^A(\boldsymbol s)
(S_x^1)^{n_x^1(q)+u_x^1}
(S_x^2)^{n_x^2(q)+u_x^2}
\,ds_x.
\label{eq:correlation-expansion}
\end{align}

By~\eqref{eq:local-integral-mdd}--\eqref{eq:local-integral-per}, the
term indexed by \(q=(q^1,q^2)\) can be non-zero only if
\begin{equation}\label{eq:local-survival}
\bigl(
n_x^1(q)+u_x^1,
n_x^2(q)+u_x^2
\bigr)
\in\mathcal Q_A
\qquad
\text{for every }x\in V.
\end{equation}
To each such \(q\), associate the unique configuration
\(m\in\widehat\Omega_{\mathrm{en}}\) defined by
\[
m^i_{x,y}:=q^i_{x,y}
\quad\text{for }\{x,y\}\in E,
\qquad
m^i_{x,s}:=u_x^i
\quad\text{for }x\in V.
\]
Then
\[
\partial m=(u^1,u^2)
\]
and
\[
n_x^i(m)
=
n_x^i(q)+u_x^i.
\]
Thus~\eqref{eq:local-survival} is equivalent to
\[
\bigl(n_x^1(m),n_x^2(m)\bigr)\in\mathcal Q_A
\qquad\text{for every }x\in V,
\]
that is, to \(m\in\Omega_{\mathrm{en}}^A(u^1,u^2)\).

Conversely, every configuration in
\(\Omega_{\mathrm{en}}^A(u^1,u^2)\) is uniquely determined by its
restriction to \(E\), since its source-edge multiplicities are fixed by
\(\partial m=(u^1,u^2)\). Hence the preceding construction is a
bijection between the surviving terms in~\eqref{eq:correlation-expansion}
and the configurations in
\(\Omega_{\mathrm{en}}^A(u^1,u^2)\).

For a surviving term, the local integral at \(x\) equals \(w_{x,x}\)
if \((n_x^1(m),n_x^2(m))=(0,0)\), and equals \(1\) otherwise. Moreover,
the Taylor coefficient associated with an original edge \(\{x,y\}\)
is
\[
\frac{
w_{x,y}^{m^1_{x,y}+m^2_{x,y}}
}{
m^1_{x,y}!\,m^2_{x,y}!
}.
\]
Therefore its total contribution is exactly
\(\mu_{G,w}^{A,u}(m)\), and~\eqref{eq:correlation-expansion} becomes
\begin{equation}\label{eq:unnormalized-conversion}
\int_{\Omega_s}
\boldsymbol{ds}\,
\boldsymbol\gamma^A(\boldsymbol s)
e^{H(\boldsymbol s)}
\prod_{x\in V}
(S_x^1)^{u_x^1}(S_x^2)^{u_x^2}
=
\mu_{G,w}^{A,u}
\bigl(\Omega_{\mathrm{en}}^A(u^1,u^2)\bigr).
\end{equation}

It remains to identify the partition function. Taking
\(u^1=u^2=0\) in~\eqref{eq:unnormalized-conversion} gives
\begin{equation}\label{eq:spin-partition-expanded}
Z^{\mathrm{spin}}_{G,w,A}
=
\mu_{G,w}^{A,0}
\bigl(\Omega_{\mathrm{en}}^A(0,0)\bigr).
\end{equation}

If \(A=\mathrm{mdd}\), every vertex in a source-free configuration has
local time either \((0,0)\) or \((1,1)\). Hence it is either a monomer
or is incident to exactly one dimer of each colour. In particular, all
edge multiplicities are in \(\{0,1\}\), so the factorials in
\eqref{eq:enlarged-weight} equal one. Thus the configurations and their
weights are exactly those of the monomer double-dimer model, and
\[
\mu_{G,w}^{\mathrm{mdd},0}
\bigl(\Omega_{\mathrm{en}}^{\mathrm{mdd}}(0,0)\bigr)
=
Z^{\mathrm{mdd}}_{G,w}.
\]

If \(A=\mathrm{per}\), every non-isolated vertex is incident to two
dimers of one colour. Consequently, every non-trivial connected
component is a monochromatic cycle. A cycle containing at least three
vertices has two possible colours, corresponding to its two possible
orientations, and in either case contributes the product of the weights
of its edges. A component consisting of two vertices joined by two
dimers has again two possible colours; each colour contributes
\(w_{x,y}^2/2!\), so their total contribution is
\[
2\,\frac{w_{x,y}^2}{2!}=w_{x,y}^2,
\]
which is exactly the weight of the transposition \(x\leftrightarrow y\).
Finally, an isolated vertex contributes \(w_{x,x}\), the weight of a
fixed point. Therefore the total weight of the source-free
configurations is precisely the random lattice-permutation partition
function:
\[
\mu_{G,w}^{\mathrm{per},0}
\bigl(\Omega_{\mathrm{en}}^{\mathrm{per}}(0,0)\bigr)
=
Z^{\mathrm{per}}_{G,w}.
\]
This proves~\eqref{eq:partition-spin-dimer}. Dividing
\eqref{eq:unnormalized-conversion} by the common partition function
gives~\eqref{eq:correlation}.
\end{proof}

\begin{cor}[Two-point function as a spin correlation]
\label{cor:twopoint-spin}
For $A\in\{\mathrm{mdd},\mathrm{per}\}$ and $x,y\in V$,
\begin{equation}\label{eq:twopoint-spin}
G^A_{G,w}(x,y)=\langle S_x^1S_y^1\rangle_{G,w,A}.
\end{equation}
\end{cor}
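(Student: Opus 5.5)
The plan is to apply Proposition~\ref{prop:conversion} with the source vector $u^1=\mathbf 1_{\{x\}}+\mathbf 1_{\{y\}}$, $u^2=0$, and then match the resulting weighted configurations with those defining $G^A_{G,w}(x,y)$. By \eqref{eq:correlation},
\[
\langle S_x^1S_y^1\rangle_{G,w,A}=\frac{\mu^{A,u}_{G,w}\bigl(\Omega^A_{\mathrm{en}}(u^1,0)\bigr)}{Z^A_{G,w}},
\]
so it suffices to show that the numerator equals $Z^{\mathrm{mdd}}_{G,w}(x,y)$ when $A=\mathrm{mdd}$, and equals the unnormalised sum in \eqref{eq:twopointrlp} when $A=\mathrm{per}$. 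The case $x=y$ is treated separately: $u^1_x=2$, $u^2_x=0$. For mdd, $(2,0)\notin\mathcal Q_{\mathrm{mdd}}$ and local times only increase, so the set is empty and the correlation is $0$, as in Definition~\ref{def:twopoint-mdd}. For per, the local time at $x$ is $(2,0)$ with no further edges at $x$, which corresponds to $\pi$ restricted to $V\setminus\{x\}$, i.e.\ $\Omega^{\mathrm{per}}_{x,x}$.

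\textbf{MDD case, $x\neq y$.} Each of $x,y$ has one blue source edge. The constraint $(n^1,n^2)\in\{(0,0),(1,1)\}$ then forces exactly zero blue and one red ordinary edge at $x$ and at $y$; at every other vertex the usual constraint holds. All multiplicities lie in $\{0,1\}$, so the factorials are $1$. The monomer factor $w_{z,z}$ appears only at vertices $z\neq x,y$ with local time $(0,0)$; at $x,y$ the local time is $(1,1)$, so no monomer weight appears. This is exactly $\Omega_{x,y}$ with the weight of Definition~\ref{def:twopoint-mdd}.

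\textbf{Per case, $x\neq y$.} Now $x,y$ must have total local time $(2,0)$, so each is incident to exactly one ordinary blue edge and no red edges; every other vertex has local time $(0,0)$, $(2,0)$ or $(0,2)$. The ordinary edges therefore form monochromatic cycles together with a single blue self-avoiding path from $x$ to $y$. The main obstacle is the bookkeeping of multiplicities:

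\emph{Cycles.} As in the proof of Proposition~\ref{prop:conversion}, a cycle's two colour choices correspond to its two orientations. For doubled edges, $2\cdot w^2/2!=w^2$ gives the transposition weight.

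\emph{The path.} It is necessarily blue, and its two ends have only one blue ordinary edge each, so a doubled edge cannot occur on the path, except possibly the edge $\{x,y\}$ itself. That is impossible too: it would give $x$ two ordinary blue edges plus a source. The path is therefore simple, with weight $\prod w$ along its edges.

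\emph{Bijection with $\Omega^{\mathrm{per}}_{x,y}$.} Orient the path from $x$ to $y$ and orient each cycle as above. This gives a bijection from $V\setminus\{y\}$ to $V\setminus\{x\}$, with fixed points at isolated vertices carrying $w_{z,z}$. Conversely, each $\pi\in\Omega^{\mathrm{per}}_{x,y}$ decomposes into a path from $x$ to $y$, cycles, and fixed points, and the weights agree.

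The only delicate point is that a path of length $1$ and cycles of length $2$ are counted with the correct factor. Once that is checked, dividing by $Z^A_{G,w}$ gives \eqref{eq:twopoint-spin}.
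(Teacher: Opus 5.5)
Your proposal is correct and follows the paper's proof. You apply Proposition~\ref{prop:conversion} with $u^1=\delta_x+\delta_y$, $u^2=0$, and recover $\Omega_{x,y}$ after deleting the source edges in the mdd case. In the per case you orient the forced monochromatic $x$--$y$ path and reuse the cycle-orientation correspondence, and you treat $x=y$ by emptiness (mdd) or by isolation of $x$ (per); your multiplicity checks along the path and on doubled edges spell out details the paper leaves implicit.
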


\begin{proof}
Apply Proposition~\ref{prop:conversion} with
$u^1=\delta_x+\delta_y$ and $u^2=0$. In the monomer double-dimer
case, deleting the two source-edges gives exactly a configuration in
$\Omega_{x,y}$; when $x=y$, the enlarged event is empty, consistently with
$G^{\mathrm{mdd}}_{G,w}(x,x)=0$. In the permutation case, the component
joining the two source-edges is a monochromatic path from $x$ to $y$, while
all other components are monomers or monochromatic cycles. Orienting the path
from $x$ to $y$ and using the cycle-orientation correspondence from the
previous proof gives the configurations in $\Omega^{\mathrm{per}}_{x,y}$,
with the same total weight. This also covers $x=y$, when the two source
dimers isolate $x$. Division by the corresponding partition function proves
the claim.
\end{proof}

\begin{prop}
\label{prop:loop-connection-spin}
Let \(L\in2\mathbb N\) and let \(w\) be {lazy-bipartite}. For distinct
\(x,y\in\T_L\),
\begin{equation}\label{eq:loop-connection-spin}
2\left\langle
S_x^1S_y^1\overline{S_x^2}\,\overline{S_y^2}
\right\rangle_{L,w,\mathrm{mdd}}
=
\begin{cases}
\mathbb P^{\mathrm{mdd}}_{L,w}(x\leftrightarrow y),
&x-y\in\T_L^{\mathrm o},\\
0,&x-y\in\T_L^{\mathrm e}.
\end{cases}
\end{equation}
\end{prop}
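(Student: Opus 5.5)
The plan is to redo the expansion in the proof of Proposition~\ref{prop:conversion} with the observable $S_x^1S_y^1\overline{S_x^2}\,\overline{S_y^2}$. This observable contains complex conjugates, so it is not directly covered by \eqref{eq:correlation}. I will then identify the surviving terms with MDD configurations in which $x\leftrightarrow y$, via an explicit two-to-one recolouring map.

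\emph{Step 1: local constraints.} I expand $e^{H}$ as in \eqref{eq:H-expansion} and integrate site by site using \eqref{eq:angular-orthogonality}. This formula is symmetric in $S$ and $\overline S$, so the parity convention \eqref{eq:spinvariable} plays no role. At $z\notin\{x,y\}$ the local integral forces $(n_z^1,n_z^2)\in\{(0,0),(1,1)\}$, with weights $w_{z,z}$ and $1$ respectively. At $z=x$ (and likewise at $y$) the integrand is $\gamma_x^{\mathrm{mdd}}(S_x^1)^{n_x^1+1}(S_x^2)^{n_x^2}\overline{S_x^2}$. The $w_{x,x}$ term of $\gamma_x^{\mathrm{mdd}}$ would require $n_x^1+1=0$, which is impossible. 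The term $\overline{S_x^1}\,\overline{S_x^2}$ requires $n_x^1=0$ and $n_x^2=2$, and then contributes $1$. So the surviving $q=(q^1,q^2)$ are exactly those in which $x$ and $y$ carry no blue dimer and two red dimers, and all other vertices satisfy the MDD constraint. The weight is the usual product of edge weights times $w_{z,z}$ over monomers $z$, divided by $\prod q^i_e!$. Since $Z^{\mathrm{spin}}=Z^{\mathrm{mdd}}$, the left-hand side of \eqref{eq:loop-connection-spin} equals $2/Z^{\mathrm{mdd}}$ times the total weight of these configurations.

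\emph{Step 2: geometry and parity.} In a surviving configuration every vertex other than $x,y$ is either a monomer or is incident to one dimer of each colour. Following alternating colours from $x$ along each of its two red edges, each walk can only stop at the other defect $y$. Hence the component of $x$ consists of two alternating arcs from $x$ to $y$, each beginning and ending with a red dimer, so each arc has odd length. All other components are ordinary MDD loops or monomers. Under the lazy-bipartite assumption every dimer joins $\T_L^{\mathrm e}$ and $\T_L^{\mathrm o}$. An odd-length arc therefore forces $x-y\in\T_L^{\mathrm o}$, and when $x-y\in\T_L^{\mathrm e}$ no configuration survives, giving $0$.

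\emph{Step 3: the bijection for $x-y$ odd.} Take an MDD configuration with $x\leftrightarrow y$. The loop through $x$ splits into two arcs from $x$ to $y$. Both arcs have odd length, because $x-y$ is odd and each edge changes parity class. One arc leaves $x$ with a blue dimer and therefore also ends at $y$ with a blue dimer, while the other starts and ends with red. Swapping the colours on the blue-starting arc produces a surviving spin configuration with the same edge and monomer weights. Conversely, from a spin configuration, swapping the colours on either of its two arcs yields an MDD configuration with $x\leftrightarrow y$. The map is therefore two-to-one onto spin configurations and weight-preserving, and this produces the factor $2$ in \eqref{eq:loop-connection-spin}.

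The step I expect to need care is the degenerate case in which the loop through $x,y$ has length two, i.e.\ a doubled edge $\{x,y\}$ carrying one blue and one red dimer. On the spin side this configuration is the edge $\{x,y\}$ carrying two red dimers, with Taylor weight $w_{x,y}^2/2!$. The two arcs coincide here, so the counting is one-to-one instead of two-to-one. The factorial $1/2!$ exactly compensates, since the spin weight $w_{x,y}^2/2$ is half the MDD weight $w_{x,y}^2$. I will check this case separately, and I will also check that no other edge multiplicity $2$ can occur: a multiplicity-$2$ edge away from $x,y$ would give some vertex red degree at least $2$, which the constraint at vertices other than $x,y$ forbids.
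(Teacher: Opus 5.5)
Your proposal is correct and follows essentially the same route as the paper: orthogonality forces colour-degrees $(0,2)$ at $x,y$ and $(0,0)$ or $(1,1)$ elsewhere, parity rules out the equal-parity case, and switching colours along either arc from $x$ to $y$ produces the factor $2$, with the Taylor coefficient $1/2!$ compensating for the doubled edge $\{x,y\}$. One point should be stated explicitly, and before Step~2 rather than after: the alternating walk from $x$ cannot return to $x$ through its second red edge, because that would close an odd cycle, which lazy-bipartiteness forbids (without this, e.g.\ a triangle through $x$ would survive); only then does the walk necessarily end at $y$.
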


\begin{proof}
Since \(w\) is {lazy-bipartite},
\(w^{(L)}_{u,v}=0\) whenever \(u\neq v\) have the same parity, while
\(w^{(L)}_{z,z}=w(0)\) may be non-zero. Expanding the Hamiltonian as in
the proof of Proposition~\ref{prop:conversion}, orthogonality shows
that the insertion
\(S_x^1S_y^1\overline{S_x^2}\,\overline{S_y^2}\)
forces the colour-degrees to be \((0,2)\) at \(x\) and \(y\), and
either \((0,0)\) or \((1,1)\) at every other vertex. The vertices with
colour-degree \((0,0)\) are isolated monomers. After fixing their set,
the remaining part of the configuration is fully packed. Such a
configuration can exist only when \(x\) and \(y\) have opposite parity.
Its component containing \(x\) is a cycle containing \(y\), along
which the colours alternate except at \(x\) and \(y\).

Switching the colours along either of the two arcs from \(x\) to \(y\)
produces a monomer double-dimer configuration in which \(x\) and \(y\)
belong to the same loop, while leaving all monomers unchanged.
Conversely, every configuration satisfying \(x\leftrightarrow y\) is
obtained in this way with the same monomer set. If the marked loop has
length at least four, each source configuration has two distinct images
of the same weight, and the inverse switch is unique. If the marked
loop is the doubled edge \(\{x,y\}\), the two choices coincide, but the
corresponding Taylor coefficient contains the factor \(1/2!\). Since
the monomer set, and hence all diagonal factors, remain unchanged, in
both cases
\[
\mathbb P^{\mathrm{mdd}}_{L,w}(x\leftrightarrow y)
=
2\left\langle
S_x^1S_y^1\overline{S_x^2}\,\overline{S_y^2}
\right\rangle_{L,w,\mathrm{mdd}},
\]
which proves \eqref{eq:loop-connection-spin}. For equal-parity
endpoints, orthogonality and the parity constraint give zero.
\end{proof}

\begin{prop}[Edge probabilities]
\label{prop:derivationbound1}
For $x\ne y$,
\begin{equation}\label{eq:equivalence}
\mathbb P^{\mathrm{mdd}}_{G,w}(m^1_{x,y}=1)
=w_{x,y}\langle S_x^1S_y^1\rangle_{G,w,\mathrm{mdd}}.
\end{equation}
Moreover, for every $x,y\in V$,
\begin{equation}\label{eq:permutation-edge-spin}
\mathbb P^{\mathrm{per}}_{G,w}(\pi(x)=y)
=w_{x,y}\langle S_x^1S_y^1\rangle_{G,w,\mathrm{per}}.
\end{equation}
\end{prop}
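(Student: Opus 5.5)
The plan is a bijective surgery on configurations: remove the dimer or jump from $x$ to $y$ and turn the two freed endpoints into the sources of a two-point configuration. We then compare weights using Corollary~\ref{cor:twopoint-spin}, which gives $\langle S_x^1S_y^1\rangle_{G,w,A}=G^A_{G,w}(x,y)$. It therefore suffices to prove the unnormalised identities
\[
\sum_{m\in\Omega:\,m^1_{x,y}=1}\mathrm{wt}(m)=w_{x,y}\,Z^{\mathrm{mdd}}_{G,w}(x,y),
\qquad
\sum_{\pi:\,\pi(x)=y}\prod_{z}w_{z,\pi(z)}=w_{x,y}\sum_{\pi\in\Omega^{\mathrm{per}}_{x,y}}\prod_{z\neq y}w_{z,\pi(z)},
\]
and then divide by $Z^{\mathrm{mdd}}_{G,w}$, respectively $Z^{\mathrm{per}}_{G,w}$, which equals the spin partition function by \eqref{eq:partition-spin-dimer}.

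\textbf{Permutation case.} Let $\pi$ satisfy $\pi(x)=y$ and define $\pi'$ as the restriction of $\pi$ to $V\setminus\{x\}$. This $\pi'$ is a bijection onto $V\setminus\{y\}$. The definition of $\Omega^{\mathrm{per}}_{a,b}$ requires a bijection from $V\setminus\{b\}$ to $V\setminus\{a\}$, so the roles are reversed: $\pi'\in\Omega^{\mathrm{per}}_{y,x}$. Conversely, every $\pi'\in\Omega^{\mathrm{per}}_{y,x}$ extends uniquely to a permutation $\pi$ by setting $\pi(x)=y$. The weights factor as
\[
\prod_z w_{z,\pi(z)}=w_{x,y}\prod_{z\neq x}w_{z,\pi'(z)},
\]
so the left-hand sum equals $w_{x,y}\,Z^{\mathrm{per}}_{G,w}G^{\mathrm{per}}_{G,w}(y,x)$. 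By symmetry of $w$, $G^{\mathrm{per}}_{G,w}(y,x)=G^{\mathrm{per}}_{G,w}(x,y)$: reversing the path and all cycles, i.e.\ passing to the inverse bijection, preserves the weight. Equivalently, $\langle S_y^1S_x^1\rangle=\langle S_x^1S_y^1\rangle$ since the spins commute. The case $x=y$ is consistent: fixing $\pi(x)=x$ and deleting $x$ gives $w_{x,x}$ times the weight of bijections of $V\setminus\{x\}$, which is exactly $G^{\mathrm{per}}(x,x)$ as defined in \eqref{eq:twopointrlp}.

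\textbf{MDD case.} Take $m\in\Omega$ with $m^1_{x,y}=1$ and remove that blue dimer to obtain $m'$. In $m'$, both $x$ and $y$ have blue degree $0$ and red degree $1$, and every other vertex satisfies the usual constraint, so $m'\in\Omega_{x,y}$. The only subtle point is the monomer factor. In the weight of $m$, the vertices $x$ and $y$ are not monomers and carry no $w_{z,z}$ factor. In $Z^{\mathrm{mdd}}(x,y)$ the diagonal product runs over $z\neq x,y$ only, so no factor appears there either. All other factors agree, and the removed dimer contributes exactly $w_{x,y}$.

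Conversely, take $m'\in\Omega_{x,y}$ and add a blue dimer on $\{x,y\}$. Then $x$ and $y$ acquire degree $(1,1)$, giving a valid configuration $m\in\Omega$. We also need $m'^1_{x,y}=0$, which holds automatically because $x$ has blue degree $0$ in $m'$. This gives the required bijection, and the identity follows.

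The only genuine care needed is in the bookkeeping of the diagonal monomer factors and of the source and sink ordering in $\Omega^{\mathrm{per}}_{x,y}$. Beyond that there is no real obstacle.
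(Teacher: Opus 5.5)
Your proposal is correct and is essentially the paper's argument: you delete the blue dimer on $\{x,y\}$ (resp.\ the arrow $x\mapsto y$), note that this is a bijection which removes exactly the factor $w_{x,y}$, and use symmetry of $w$ via inversion of the partial bijection to pass from $G^{\mathrm{per}}_{G,w}(y,x)$ to $G^{\mathrm{per}}_{G,w}(x,y)$. The only cosmetic difference is that for MDD the paper maps directly onto the source-edge configurations $\Omega_{\mathrm{en}}^{\mathrm{mdd}}(\delta_x+\delta_y,0)$ and invokes Proposition~\ref{prop:conversion}, whereas you land in $\Omega_{x,y}$ and invoke Corollary~\ref{cor:twopoint-spin}; this is the same statement once the two source edges are deleted.
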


\begin{proof}
For the first identity, remove the blue dimer on $\{x,y\}$ and add one blue
source-edge at each endpoint. This is a bijection onto
$\Omega_{\mathrm{en}}^{\mathrm{mdd}}(\delta_x+\delta_y,0)$, and it removes
exactly the factor $w_{x,y}$. Proposition~\ref{prop:conversion} concludes.

For permutations, removing the arrow $x\mapsto y$ leaves a bijection
$V\setminus\{x\}\to V\setminus\{y\}$; hence
\[
\mathbb P^{\mathrm{per}}_{G,w}(\pi(x)=y)
=w_{x,y}G^{\mathrm{per}}_{G,w}(y,x).
\]
Since the weights are symmetric, inversion of the partial bijection gives
$G^{\mathrm{per}}_{G,w}(y,x)=G^{\mathrm{per}}_{G,w}(x,y)$. The result now
follows from Corollary~\ref{cor:twopoint-spin}. The same argument with $x=y$
amounts to deleting the fixed point at $x$.
\end{proof}

\begin{prop}[Uniform upper bounds on correlations and probabilities]
\label{prop:probabilityestimate}
Suppose that \(w\) is {admissible and $b(w)>0$}. For every even \(L\),
\(x\neq y\in\mathbb T_L\), and
\(A\in\{\mathrm{mdd},\mathrm{per}\}\),
\begin{equation}\label{eq:uniform-correlation-bound}
G^A_{L,w}(x,y)
\leq
\frac{e}{{b(w)}}.
\end{equation}
Consequently,
\begin{align}
\mathbb P^{\mathrm{mdd}}_{L,w}(m^1_{x,y}=1)
&\leq
\frac{e}{{b(w)}}\,w^{(L)}_{x,y},
\label{eq:blue-edge-bound}\\
\mathbb P^{\mathrm{per}}_{L,w}(\pi(x)=y)
&\leq
\frac{e}{{b(w)}}\,w^{(L)}_{x,y},
\label{eq:permutation-edge-bound}\\
\mathbb P^{\mathrm{mdd}}_{L,w}\bigl(n_x(m^1)=0\bigr)
&\leq
\frac{e\,{w^{(L)}(0)}}{{b(w)}}.
\label{eq:monomer-bound}
\end{align}
\end{prop}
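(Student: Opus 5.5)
The plan is to derive the uniform bound \eqref{eq:uniform-correlation-bound} from a single sum rule, namely the normalisation of edge (jump) probabilities. The three consequences \eqref{eq:blue-edge-bound}--\eqref{eq:monomer-bound} should then follow almost immediately from Proposition~\ref{prop:derivationbound1}.

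\emph{Step 1 (sum rule).} Every non-monomer vertex carries exactly one blue dimer, and every vertex has exactly one image under $\pi$. Proposition~\ref{prop:derivationbound1} therefore gives, for each $x$,
\[
\sum_{z\neq x} w^{(L)}_{x,z}\,G^A_{L,w}(x,z)\;\le\;1 .
\]
For permutations the $z=x$ term may also be included. Restricting the sum to $z$ with $|z-x|_1$ odd, whose total periodised weight is at least $b(w)$, I get
\[
b(w)\,\min_{z:\,|z-x|_1\ \text{odd}} G^A_{L,w}(x,z)\le 1 .
\]
A weighted average would be more natural than the minimum; I would keep the weighted average in the argument.

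\emph{Step 2 (comparison).} The core step is a comparison inequality of the form
\[
G^A_{L,w}(x,y)\;\le\; e\cdot \frac{\sum_{z\ \mathrm{odd}} w^{(L)}_{x,z}G^A_{L,w}(x,z)}{\sum_{z\ \mathrm{odd}} w^{(L)}_{x,z}}
\]
for all $y\neq x$. For this I would use reflection positivity (Proposition~\ref{prop:reflectionpos}) with $f=S^1_x$, together with \eqref{eq:complexalternation}. This yields $|\langle S^1_xS^1_y\rangle|^2\le \langle S^1_x\,S^1_{\Theta x}\rangle\,\langle S^1_y\,S^1_{\Theta y}\rangle$, which reduces an arbitrary pair to pairs separated by an odd multiple of a coordinate vector. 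The monotonicity of Proposition~\ref{prop:monotonicity} (with $F_x=S^1_x$) then pushes these pairs down to the shortest odd separations. Running the Cauchy--Schwarz step iteratively, or using the chessboard estimate (Proposition~\ref{prop:chessboard}) to average over all reflected positions, should produce a bound of the form $G(y)\le \prod (\text{averages})^{1/n}$. The factor $e$ would then come from optimising a loss like $(1+1/n)^n\le e$ in this interpolation. Boundedness \eqref{eq:boundedness}, needed to apply monotonicity, would be obtained a priori from the crude estimate $|\langle S^1_xS^1_y\rangle|\le$ a finite-volume constant, or by a bootstrap from Step 1.

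\emph{Step 3 (consequences).} Multiplying \eqref{eq:uniform-correlation-bound} by $w^{(L)}_{x,y}$ in \eqref{eq:equivalence} and \eqref{eq:permutation-edge-spin} gives \eqref{eq:blue-edge-bound} and \eqref{eq:permutation-edge-bound}. For \eqref{eq:monomer-bound}, I would use the analogous deletion identity at a monomer: removing the diagonal factor $w^{(L)}(0)$ at $x$ and inserting source edges gives $\mathbb P(n_x(m^1)=0)=w^{(L)}(0)\,\langle S^1_xS^2_x\rangle$ (with the appropriate conjugation convention). This diagonal correlation is controlled by the same sum-rule and comparison argument.

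The main obstacle is Step 2. Site monotonicity only compares points along coordinate axes with odd coordinate, whereas the weight $b(w)$ is spread over arbitrary odd-parity sites. Turning reflection positivity into an inequality valid for all $y$, with the explicit constant $e$, is where I expect most of the work to lie.
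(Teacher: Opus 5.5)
\textbf{There is a genuine gap in your Step~2.} The problem is not only that the step is left unspecified. The comparison inequality you are aiming for, $G^A_{L,w}(x,y)\le e\,b(w)^{-1}\sum_{z\ \mathrm{odd}}w^{(L)}_{x,z}G^A_{L,w}(x,z)$ for all $y\neq x$, is false in general.

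Take Example~(C) with $d\ge2$, $w(0)$ close to $1$ and $\gamma$ small. For RLP, the single jump $0\mapsto z$ with all other sites fixed gives a lower bound. The convolution bound \eqref{eq:large-monomer-convolution} gives an upper bound. Together they show $G^{\mathrm{per}}_{L,w}(z)\approx w(z)/w(0)^2$ to leading order as $w(0)\uparrow1$, uniformly in $\gamma$. The ratio of $G(e_1)$ to the $w$-weighted odd average is then close to $w(e_1)b(w)/\sum_{z\ \mathrm{odd}}w(z)^2$, which tends to $2^d>e$ as $\gamma\downarrow0$.

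Structurally, the tools you list cannot produce such an inequality anyway. Reflection Cauchy--Schwarz and site monotonicity only bound correlations by correlations at short odd separations, so iterating them reduces everything to quantities like $G(e_i)$. The sum rule then gives only $G(e_i)\le 1/w^{(L)}(e_i)$. For spread-out kernels this is far larger than $e/b(w)$. Passing from a maximum to a $w$-weighted average would require lower bounds on correlations at distance, and reflection positivity does not provide these. Invoking Proposition~\ref{prop:monotonicity} also needs the $L$-uniform bound \eqref{eq:boundedness}, which is exactly what this proposition supplies later in the paper. A finite-volume constant does not meet that hypothesis, so that route is circular.

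\textbf{The missing idea} is to push the chessboard estimate all the way to the fully disseminated observable and then count configurations.
\begin{itemize}
\item Proposition~\ref{prop:chessboard} gives $G^A_{L,w}(x,y)=\langle S^1_xS^1_y\rangle\le\langle\prod_zS^1_z\rangle^{2/|\T_L|}$.
\item Proposition~\ref{prop:conversion} with $u^1\equiv1$, $u^2\equiv0$ evaluates this exactly: $\langle\prod_zS^1_z\rangle=Z^{\mathrm{dim}}_{L,w}/Z^A_{L,w}$.
\item Since $Z^A_{L,w}\ge(Z^{\mathrm{dim}}_{L,w})^2$ (restrict to fully packed configurations, respectively even-cycle permutations), this is at most $1/Z^{\mathrm{dim}}_{L,w}$.
\item With $N=|\T_L|/2$, the even--odd block of $w^{(L)}/b(w)$ is doubly stochastic. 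The Egorychev--Falikman (van der Waerden) theorem then gives $Z^{\mathrm{dim}}_{L,w}\ge b(w)^NN!/N^N\ge(b(w)/e)^N$, hence $G^A_{L,w}(x,y)\le e/b(w)$.
\end{itemize}
The constant $e$ is the van der Waerden loss $N!/N^N\ge e^{-N}$, not an interpolation loss $(1+1/n)^n$, and your sum rule from Step~1 plays no role.

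The monomer bound follows the same way with $\prod_zS^1_zS^2_z$. Its expectation is $1/Z^{\mathrm{mdd}}_{L,w}\le(Z^{\mathrm{dim}}_{L,w})^{-2}$, so the chessboard estimate gives $\langle S^1_xS^2_x\rangle\le(Z^{\mathrm{dim}}_{L,w})^{-1/N}\le e/b(w)$.

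Your Step~3 reductions are correct and match the paper: the edge identities of Proposition~\ref{prop:derivationbound1}, and $\mathbb P(n_x(m^1)=0)=w^{(L)}(0)\langle S^1_xS^2_x\rangle$. However, your claim that the diagonal correlation is controlled ``by the same comparison argument'' inherits the gap above.
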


\begin{proof}
Put
$
N:=\frac{|\mathbb T_L|}{2}.
$
Let \(\mathcal D^{\mathrm o}_L\) be the set of perfect matchings using
only edges joining the two parity classes, and define
\[
Z^{\mathrm{dim},\mathrm o}_{L,w}
:=
\sum_{D\in\mathcal D^{\mathrm o}_L}
\prod_{\{x,y\}\in D}w^{(L)}_{x,y}.
\]
After enumerating the even and odd vertices by \(1,\ldots,N\), the matrix
\[
B_{x,y}
:=
\frac{w^{(L)}_{x,y}}{b(w)},
\qquad
x\in\mathbb T_L^{\mathrm e},
\quad
y\in\mathbb T_L^{\mathrm o},
\]
is doubly stochastic. Hence, by the Egorychev--Falikman theorem{~\cite{Egorychev1981,Falikman1981}},
\begin{equation}\label{eq:dimer-partition-lower}
Z^{\mathrm{dim},\mathrm o}_{L,w}
\geq
b(w)^N\frac{N!}{N^N},
\qquad
\bigl(Z^{\mathrm{dim},\mathrm o}_{L,w}\bigr)^{1/N}
\geq
\frac{b(w)}{e}.
\end{equation}

Let \(Z^{\mathrm{dim}}_{L,w}\) denote the unrestricted dimer partition
function. Clearly,
$
Z^{\mathrm{dim}}_{L,w}
\geq
Z^{\mathrm{dim},\mathrm o}_{L,w}.
$
Moreover, for both models,
\begin{equation}\label{eq:model-partition-lower}
Z^A_{L,w}
\geq
\bigl(Z^{\mathrm{dim}}_{L,w}\bigr)^2.
\end{equation}
For the monomer double-dimer model this follows by restricting to
fully packed configurations. For random lattice permutations, the
right-hand side is the total weight of permutations whose cycles all
have even length, and hence is bounded above by
\(Z^{\mathrm{per}}_{L,w}\).

Proposition~\ref{prop:conversion}, applied with
\(u^1\equiv1\) and \(u^2\equiv0\), gives
\[
\left\langle
\prod_{z\in\mathbb T_L}S_z^1
\right\rangle_{L,w,A}
=
\frac{Z^{\mathrm{dim}}_{L,w}}{Z^A_{L,w}}.
\]
Indeed, every vertex has one blue source-edge; the local constraint
therefore forces the original edges to form a dimer configuration of
the other colour in the monomer double-dimer model and of the same
colour in the permutation model. By \eqref{eq:model-partition-lower},
\[
\left\langle
\prod_{z\in\mathbb T_L}S_z^1
\right\rangle_{L,w,A}
\leq
\frac{1}{Z^{\mathrm{dim}}_{L,w}}
\leq
\frac{1}{Z^{\mathrm{dim},\mathrm o}_{L,w}}.
\]

The chessboard estimate now yields
\begin{align*}
G^A_{L,w}(x,y)
&=
\langle S_x^1S_y^1\rangle_{L,w,A} 
 \leq
\left\langle
\prod_{z\in\mathbb T_L}S_z^1
\right\rangle_{L,w,A}^{2/|\mathbb T_L|} 
 \leq
\bigl(Z^{\mathrm{dim},\mathrm o}_{L,w}\bigr)^{-1/N}
\leq
\frac{e}{b(w)},
\end{align*}
where the last inequality follows from
\eqref{eq:dimer-partition-lower}. This proves
\eqref{eq:uniform-correlation-bound}.
The identities of Proposition~\ref{prop:derivationbound1}
then give \eqref{eq:blue-edge-bound} and
\eqref{eq:permutation-edge-bound}.

Finally, Proposition~\ref{prop:conversion}, applied with
\(u^1=u^2=\delta_x\), gives
\begin{equation}\label{eq:monomer-spin-identity}
\mathbb P^{\mathrm{mdd}}_{L,w}\bigl(n_x(m^1)=0\bigr)
=
{w^{(L)}(0)}\,
\langle S_x^1S_x^2\rangle_{L,w,\mathrm{mdd}}.
\end{equation}
The chessboard estimate and Proposition~\ref{prop:conversion}, now
applied with \(u^1\equiv u^2\equiv1\), imply
\begin{align*}
\langle S_x^1S_x^2\rangle_{L,w,\mathrm{mdd}}
&\leq
\left\langle
\prod_{z\in\mathbb T_L}S_z^1S_z^2
\right\rangle_{L,w,\mathrm{mdd}}^{1/|\mathbb T_L|}
=
\bigl(Z^{\mathrm{mdd}}_{L,w}\bigr)^{-1/|\mathbb T_L|}
\leq
\bigl(Z^{\mathrm{dim},\mathrm o}_{L,w}\bigr)^{-1/N}
\leq
\frac{e}{b(w)}.
\end{align*}
Combining this with \eqref{eq:monomer-spin-identity} proves
\eqref{eq:monomer-bound}.
\end{proof}

\section{\texorpdfstring{{Long-range order}}{Long-range order} and macroscopic loops}
\label{sect:sufficientcond}
In this section we provide  general
sufficient conditions for the occurrence of {long-range order} and macroscopic loops.
The section is organised as follows. 
We first show that  long-range order implies the occurrence of macroscopic loops under the assumption that the edge weights are bipartite.  
Here we use an idea from \cite{Kenyonclaire},
which adapts naturally to the bipartite case. 
We then prove our main {theorems, Theorem~\ref{thm:longrangeorder} and Theorem~\ref{thm:longrangeorder2}, on long-range order.}

\subsection{\texorpdfstring{{Long-range order}}{Long-range order} implies macroscopic loops}

\begin{prop}\label{prop:monomerloop-lowerbound}
Let \(L\in2\mathbb N\), and let
\(w : \T_L \times \T_L \to [0,\infty)\) be symmetric, translation
invariant, nonzero, and bipartite with respect to
\(\T_L^{\mathrm e}\cup\T_L^{\mathrm o}\).
Fix any \(a\in\T_L^{\mathrm o}\) with \(w_{0,a}>0\). Then, for every \(x\in \T_L^{\mathrm o}\),
\[
w_{0,a}^2\,\mathcal{C}_{L,w}(0,x)^2
\leq
\mathbb P^{\mathrm{d.d.}}_{L,w}(0 \leftrightarrow x).
\]
Consequently,
\begin{equation}\label{eq:CauchySchwarzapp}
\frac{1}{|\T_L^{\mathrm o}|}
\sum_{x \in \T_L^{\mathrm o}} \mathcal{C}_{L,w}(0,x)
\leq
\frac{1}{w_{0,a}}
\bigg(
\frac{1}{|\T_L^{\mathrm o}|}
\sum_{x \in \T_L^{\mathrm o}}
\mathbb P^{\mathrm{d.d.}}_{L,w}(0 \leftrightarrow x)
\bigg)^{\frac12}.
\end{equation}

If
\(\sum_{y\in\T_L}w_{0,y}=1\), then
\begin{equation}\label{eq:averaged-switching-consequence}
\frac{\mathbb E^{\mathrm{d.d.}}_{L,w}|\mathcal{L}_0|}{|\T_L|}
\geq
2\bigl(M^{\mathrm{mdd}}_{L,w}\bigr)^2.
\end{equation}
{
Under the same normalization, for every $x\in\T_L^{\mathrm o}$,
\begin{equation}\label{eq:reverse-switching-bound}
\mathcal C_{L,w}(0,x)
\ge w_{0,a}\,\mathbb P^{\mathrm{d.d.}}_{L,w}(0\leftrightarrow x).
\end{equation}
}

\end{prop}

\begin{figure}[htbp]
    \centering
    \begin{tikzpicture}[
        scale=1.0,
        dot/.style={circle, fill=black, inner sep=1.5pt},
        bluebond/.style={-, blue, ultra thick, shorten >=3pt, shorten <=3pt},
        bluejump/.style={-, blue, ultra thick, bend left=30, shorten >=3pt, shorten <=3pt},
        redbond/.style={-, red, very thick, shorten >=3pt, shorten <=3pt},
        redjump/.style={-, red, very thick, bend left=30, shorten >=3pt, shorten <=3pt},
        labelnode/.style={font=\small\bfseries}
    ]

    \def\drawGrid{
        \foreach \x in {0,...,5} {
            \foreach \y in {0,...,3} {
                \node[dot] at (\x,\y) {};
            }
        }
    }

    \def\definePoints{
        \coordinate (o) at (1,1);
        \coordinate (e1) at (2,1);
        \coordinate (x) at (4,1);
        \coordinate (xe1) at (5,1);
    }

    \def\drawLabels{
        \node[labelnode, above left] at (o) {$0$};
        \node[labelnode, above right] at (e1) {$e_1$};
        \node[labelnode, below left] at (x) {$x$};
        \node[labelnode, below right] at (xe1) {$x+e_1$};
    }

    \begin{scope}[xshift=0cm]
        \drawGrid
        \definePoints
        \drawLabels

        \draw[redbond] (o) -- (1,0);
        \draw[bluebond] (1,0) -- (2,0);
        \draw[redbond] (2,0) -- (3,0);
        \draw[bluebond] (3,0) -- (3,1);
        \draw[redbond] (3,1) -- (x);

        \draw[bluejump] (e1) to node[auto, swap, black, font=\footnotesize] {$\Gamma$} (4,2);
        \draw[redbond] (4,2) -- (5,2);
        \draw[bluebond] (5,2) -- (xe1);

        \draw[redbond] (0,3) -- (1,3);
        \draw[bluebond] (1,3) -- (1,2);
        \draw[redbond] (1,2) -- (0,2);
        \draw[bluebond] (0,2) -- (0,3);

        \node[below, font=\bfseries] at (2.5,-0.8)
        {Left: Monomer sets \(\{0,x\}\) and \(\{e_1,x+e_1\}\)};
    \end{scope}

    \begin{scope}[xshift=6cm, yshift=1.5cm]
        \draw[->, very thick, >=latex] (0,0) -- (1,0)
        node[midway, above] {$\phi$};
        \node[align=center, below, font=\footnotesize] at (0.5,0)
        {Switch colours\\on \(\Gamma\)};
    \end{scope}

    \begin{scope}[xshift=8cm]
        \drawGrid
        \definePoints
        \drawLabels

        \draw[redbond] (o) -- (1,0);
        \draw[bluebond] (1,0) -- (2,0);
        \draw[redbond] (2,0) -- (3,0);
        \draw[bluebond] (3,0) -- (3,1);
        \draw[redbond] (3,1) -- (x);

        \draw[bluebond] (o) -- (e1);
        \draw[bluebond] (x) -- (xe1);

        \draw[redjump] (e1) to (4,2);
        \draw[bluebond] (4,2) -- (5,2);
        \draw[redbond] (5,2) -- (xe1);

        \draw[redbond] (0,3) -- (1,3);
        \draw[bluebond] (1,3) -- (1,2);
        \draw[redbond] (1,2) -- (0,2);
        \draw[bluebond] (0,2) -- (0,3);

        \node[below, font=\bfseries] at (2.5,-0.8)
        {Right: Single loop visiting \(0\) and \(x\)};
    \end{scope}

    \end{tikzpicture}
    \caption{An example of the map \(\phi\) with \(a=e_1\) when
    \(x\notin\{\pm e_1\}\). Blue edges belong to the first dimer
    configuration and red edges to the second one. The map first inserts
    the two blue edges \(\{0,e_1\}\) and \(\{x,x+e_1\}\) and then switches
    the colours along the unique arc \(\Gamma\) from \(e_1\) to
    \(x+e_1\) of the resulting cycle that avoids \(x\).}
\end{figure}

\begin{proof}
The matching
\(\{\{z,z+a\}:z\in\T_L^{\mathrm e}\}\) has positive weight, so the
dimer measure is well defined.
For the first claim, we adapt the switching injection from
\cite[proof of Theorem~2 and Figure~2]{Kenyonclaire} to the present
weighted setting. Since \(w\) is bipartite, the same parity argument
applies to every pair of configurations of positive weight.

It is enough to prove that, for every \(x\in\T_L^{\mathrm o}\),
\begin{equation}\label{eq:monomermonomerloop-again}
w_{0,a}^2\,Z_{L,w}^{\mathrm{dim}}(\{0,x\})^2
\leq
\bigl(Z_{L,w}^{\mathrm{dim}}(\emptyset)\bigr)^2
\mathbb P^{\mathrm{d.d.}}_{L,w}(0\leftrightarrow x).
\end{equation}

If \(x\in\{\pm a\}\), add the edge \(\{0,x\}\) to both configurations
in each ordered pair counted by
\(Z_{L,w}^{\mathrm{dim}}(\{0,x\})^2\).
This gives an injection into the double-dimer configurations for which
\(0\leftrightarrow x\), since the
resulting double-dimer configuration contains the doubled edge
\(\{0,x\}\), and multiplies the weight by
$
w_{0,x}^2=w_{0,a}^2,
$
where symmetry and translation invariance are used when \(x=-a\).
Thus \eqref{eq:monomermonomerloop-again} holds in this case.

Now let \(x\notin\{\pm a\}\). By translation invariance,
\[
Z_{L,w}^{\mathrm{dim}}(\{0,x\})
=
Z_{L,w}^{\mathrm{dim}}(\{a,x+a\}).
\]
It therefore suffices to construct an injection from the ordered pairs
counted by
\[
Z_{L,w}^{\mathrm{dim}}(\{0,x\})
Z_{L,w}^{\mathrm{dim}}(\{a,x+a\})
\]
into the double-dimer configurations for which
\(0\leftrightarrow x\), multiplying the weight by
\(w_{0,a}^2\).

Take an ordered pair \((D^{\mathrm b},D^{\mathrm r})\) contributing to
this product, colour the edges of \(D^{\mathrm b}\) blue and those of
\(D^{\mathrm r}\) red. Their union
consists of cycles and two alternating paths with endpoints
\(0,x,a,x+a\). The vertices \(0,x\) are incident only to red
edges, whereas \(a,x+a\) are incident only to blue edges.

An alternating path joining an endpoint incident only to a red edge to
one incident only to a blue edge has even length. Since \(0,a\) and
\(x,x+a\) belong to opposite
bipartition classes, neither path can join \(0\) to \(a\) or \(x\)
to \(x+a\). Consequently, adding the two blue edges
\(\{0,a\}\) and \(\{x,x+a\}\) joins the two paths into a single
cycle through all four marked vertices.

Let \(\Gamma\) be the unique arc of this cycle from \(a\) to
\(x+a\) that avoids \(x\), and let \(E(\Gamma)\) denote its edge
set. Here \(\triangle\) denotes symmetric difference. Define
\[
D_1'
:=
\Bigl(
D^{\mathrm b}
\cup
\bigl\{\{0,a\},\{x,x+a\}\bigr\}
\Bigr)
\triangle E(\Gamma),
\qquad
D_2'
:=
D^{\mathrm r}\triangle E(\Gamma).
\]
Along \(\Gamma\), the colours alternate and its first and last edges
are blue. Thus the colour switch leaves every internal vertex incident
to one edge of each colour and changes one of the two blue edges at
each of \(a\) and \(x+a\) into a red edge. Hence \(D_1'\) and
\(D_2'\) are perfect matchings of \(\T_L\), and their union contains
the same cycle through \(0\) and \(x\).

The map \(\phi:(D^{\mathrm b},D^{\mathrm r})\mapsto(D_1',D_2')\) is
injective. Indeed, the uncoloured cycle containing
\(\{0,a\}\) and \(\{x,x+a\}\), as well as its arc \(\Gamma\),
is unchanged. Switching the colours along \(\Gamma\) back and then
removing these two edges uniquely recovers
\((D^{\mathrm b},D^{\mathrm r})\).

The colour switch does not change the product of the weights, while the
two inserted edges contribute
$
w_{0,a}w_{x,x+a}=w_{0,a}^2.
$
Summing over all pairs proves
\eqref{eq:monomermonomerloop-again}. Dividing by
\(\bigl(Z_{L,w}^{\mathrm{dim}}(\emptyset)\bigr)^2\) proves the first
claim. Finally, taking square roots, summing over
\(x\in\T_L^{\mathrm o}\), and applying the Cauchy--Schwarz inequality
gives \eqref{eq:CauchySchwarzapp}.

We next prove \eqref{eq:averaged-switching-consequence}, for
which no assumption on \(w_{0,e_1}\) is needed. Fix
\(x\in\T_L^{\mathrm o}\). For every \(u,y\in\T_L\) such that
\(w_{0,u}w_{x,y}>0\), take a blue matching with monomers \(0,x\) and
a red matching with monomers \(u,y\). Then \(0,y\) are even and
\(x,u\) are odd, so the only possible coincidences among these four
vertices are \(u=x\) and \(y=0\).

Suppose first that the four vertices are distinct. Insert the two blue
edges \(\{0,u\}\) and \(\{x,y\}\). An alternating path from an
endpoint incident only to a red edge to one incident only to a blue
edge has even length. Since \(0,u\), as well as \(x,y\), have opposite
parity, neither inserted edge closes one of the two paths. The two
inserted edges therefore join the paths into a single cycle. Switching
the colours along the unique arc from \(u\) to \(y\) that avoids
\(x\) produces two perfect matchings whose loop through \(0\) contains
\(x\). The edge \(\{x,y\}\) is not switched and remains blue.

If two of the marked vertices coincide, no colour switch is needed.
If \(u=x\) and \(y\neq0\), insert \(\{0,x\}\) in blue and
\(\{x,y\}\) in red. If \(y=0\) and \(u\neq x\), insert
\(\{0,x\}\) in blue and \(\{0,u\}\) in red. If \(u=x\) and
\(y=0\), insert \(\{0,x\}\) in both colours. In each case the result
is a pair of perfect matchings whose loop through \(0\) contains \(x\),
and the inserted edges contribute the factor \(w_{0,u}w_{x,y}\), using
symmetry of \(w\) when \(y=0\).

This map is at most two-to-one. If its image does not contain the blue
edge \(\{0,x\}\), the four vertices were distinct. The vertex \(y\)
is then the blue partner of \(x\), and \(u\) is one of the two
coloured partners of \(0\). For either choice of \(u\), the arc to be
switched back and the original matchings are uniquely determined. If
the image contains the blue edge \(\{0,x\}\), there is at most one
preimage with \(u=x\) and at most one with \(y=0\); when
\(\{0,x\}\) is doubled, these two descriptions coincide. Hence
\[
\mathcal C_{L,w}(0,x)
\sum_{u,y\in\T_L}
w_{0,u}w_{x,y}\mathcal C_{L,w}(u,y)
\leq
2\mathbb P^{\mathrm{d.d.}}_{L,w}(0\leftrightarrow x).
\]
Since \(\mathcal C_{L,w}(0,x)=0\) for
\(x\in\T_L^{\mathrm e}\), the sum of the left-hand side over
\(x\in\T_L^{\mathrm o}\) may be extended to all of \(\T_L\).
Translation invariance and Parseval's identity give
\begin{align*}
&\sum_{x,u,y\in\T_L}
\mathcal C_{L,w}(0,x)w_{0,u}w_{x,y}\mathcal C_{L,w}(u,y)
=
\frac1{|\T_L|}
\sum_{k\in\T_L^*}
|\widehat w(k)|^2
\left|
\sum_{x\in\T_L}
e^{\mathrm i k\cdot x}\mathcal C_{L,w}(0,x)
\right|^2.
\end{align*}
Consequently,
\begin{align*}
\mathbb E^{\mathrm{d.d.}}_{L,w}|\mathcal L_0|
&=
2\sum_{x\in\T_L^{\mathrm o}}
\mathbb P^{\mathrm{d.d.}}_{L,w}(0\leftrightarrow x)
\geq
\frac1{|\T_L|}
\sum_{k\in\T_L^*}
|\widehat w(k)|^2
\left|
\sum_{x\in\T_L}
e^{\mathrm i k\cdot x}\mathcal{C}_{L,w}(0,x)
\right|^2.
\end{align*}
Here the first identity uses the fact that every bipartite loop has
equally many even and odd vertices. By
\eqref{eq:twopointandmonomer} for \(x\neq0\), while both sides vanish
at \(x=0\),
\[
\sum_{x\in\T_L}\mathcal{C}_{L,w}(0,x)
=
|\T_L|M^{\mathrm{mdd}}_{L,w}.
\]
Moreover, bipartiteness implies that
\(\mathcal{C}_{L,w}(0,x)=0\) when \(x\in\T_L^{\mathrm e}\). Hence, with
\(\boldsymbol\pi=(\pi,\ldots,\pi)\),
\[
\widehat w(0)=1,
\qquad
\widehat w(\boldsymbol\pi)=-1,
\qquad
\sum_{x\in\T_L}
e^{\mathrm i\boldsymbol\pi\cdot x}\mathcal{C}_{L,w}(0,x)
=
-|\T_L|M^{\mathrm{mdd}}_{L,w}.
\]
Retaining the modes \(0\) and \(\boldsymbol\pi\) proves
\eqref{eq:averaged-switching-consequence}.

{
We conclude with the reverse estimate \eqref{eq:reverse-switching-bound}.
Fix an even vertex \(x\) and an odd vertex \(y\). Take an ordered pair
\((B,R)\) of perfect matchings for which \(x\) and \(y\)
belong to the same alternating loop. Let \(u\) be the blue partner of \(x\)
and \(v\) the red partner of \(y\). Delete \(xu\) and \(yv\). The marked loop
becomes two alternating paths{, with paths of length zero allowed when
marked endpoints coincide}. Since one removed edge is blue and the other
red, each path joins an endpoint of one removed edge to an endpoint of the
other and has even length. The endpoint parities therefore force the pairs
\((u,y)\) and \((x,v)\). After switching the colours on the first path, the
blue matching leaves \(x,y\) unmatched and the red matching leaves \(u,v\)
unmatched.

For fixed \(u,v\), this map is injective: in every image pair the unique
possibly zero-length path from \(u\) to \(y\) identifies the switched edges;
switching its colours back and restoring the two deleted dimers recovers the
input. The input weight equals
\(w_{x,u}w_{y,v}\) times the output weight. Summing first over the
image and then over all pairs with these prescribed unmatched vertices gives
\[
 \bigl(Z^{\mathrm{dim}}_{L,w}\bigr)^2
 \mathbb P^{\mathrm{d.d.}}_{L,w}(x\leftrightarrow y)
 \le
 \sum_{u,v}w_{x,u}w_{y,v}
 Z^{\mathrm{dim}}_{L,w}(x,y)Z^{\mathrm{dim}}_{L,w}(u,v).
\]
After division by \((Z^{\mathrm{dim}}_{L,w})^2\),
\begin{equation}
 \mathbb P^{\mathrm{d.d.}}_{L,w}(x\leftrightarrow y)
 \le \mathcal C_{L,w}(x,y)
 \sum_{u,v}w_{x,u}w_{y,v}
 \mathcal C_{L,w}(u,v),
 \label{eq:reverse-double-dimer-switch}
\end{equation}
The vertices $u,v$ have opposite parity. The first claim and
translation invariance give $\mathcal C_{L,w}(u,v)\le w_{0,a}^{-1}$,
and the row sums of $w$ are one. Thus the sum on the right-hand side of
\eqref{eq:reverse-double-dimer-switch} is at most $w_{0,a}^{-1}$,
which proves \eqref{eq:reverse-switching-bound}.
}
\end{proof}

\subsection{Proof of \texorpdfstring{{Theorems}}{Theorems}~\ref{thm:longrangeorder} and~\ref{thm:longrangeorder2}}
\label{sect:prooftheorems}
Recall that \(M_{L,w}\) denotes the Cesàro average of the two-point
function. Both arguments rely on the infrared bound
\begin{equation}\label{eq:LROinfrared}
\widehat G_{L,w}(k)
\leq \frac{1}{1-\widehat w(k)},
\qquad k\neq 0,
\end{equation}
which follows from Proposition~\ref{prop:infrared}, since
\(\varepsilon(k)=1-\widehat w(k)\).

Unlike in classical spin systems \cite{FrohlichLiebSimon}, the main difficulty in
deriving long-range order in our setting is that one needs to bound from
above the quantity
\begin{equation}\label{eq:issue}
\sum_{k\neq 0}\widehat w(k)\widehat G_{L,w}(k)
\end{equation}
using the infrared bound. This cannot be done directly, since
\(\widehat w(k)\) need not be non-negative and no information is
available on the sign of \(\widehat G_{L,w}(k)\).

The proof of Theorem~\ref{thm:longrangeorder} overcomes this difficulty
by exploiting the symmetry properties of \(\widehat G_{L,w}(k)\)
induced by the bipartite structure of the weights. The proof of
Theorem~\ref{thm:longrangeorder2}, instead, relies on a diagonal-shift
argument.

\begin{proof}[Proof of Theorem~\ref{thm:longrangeorder}]
Since \(L\) is even and \(w\) is bipartite, \(w^{(L)}(0)=0\), the
model is fully packed, and
\[
\widehat w(k+\boldsymbol\pi)=-\widehat w(k),\qquad
\widehat G^{\mathrm{mdd}}_{L,w}(k+\boldsymbol\pi)
=-\widehat G^{\mathrm{mdd}}_{L,w}(k),
\qquad
\boldsymbol\pi=(\pi,\ldots,\pi).
\]
Proposition~\ref{prop:derivationbound1} and Fourier inversion therefore
give
\[
1=\sum_{x\in\T_L}w^{(L)}(x)G^{\mathrm{mdd}}_{L,w}(x)
=2M^{\mathrm{mdd}}_{L,w}
+\frac1{|\T_L|}
\sum_{k\notin\{0,\boldsymbol\pi\}}
\widehat w(k)\widehat G^{\mathrm{mdd}}_{L,w}(k).
\]

Choose \(\mathbb H^+\subset\T_L^*\) to contain exactly one point from
each pair \(\{k,k+\boldsymbol\pi\}\), choosing the representative with
\(\widehat w(k)\geq0\) and resolving ties arbitrarily. Pairing the two
modes and using \eqref{eq:LROinfrared}, we obtain
\begin{align*}
&\frac1{|\T_L|}
\sum_{k\notin\{0,\boldsymbol\pi\}}
\widehat w(k)\widehat G^{\mathrm{mdd}}_{L,w}(k)\\
&\qquad=
\frac2{|\T_L|}
\sum_{k\in\mathbb H^+\setminus\{0\}}
\widehat w(k)\widehat G^{\mathrm{mdd}}_{L,w}(k)
\leq
\frac1{|\T_L|}
\sum_{k\notin\{0,\boldsymbol\pi\}}
\frac{|\widehat w(k)|}{1-|\widehat w(k)|}.
\end{align*}
For $|t|<1$, the identity
\[
\frac{|t|}{1-|t|}
=\frac1{1-t}+\frac1{1+t}-\frac{2+|t|}{1+|t|},
\]
together with $\widehat w(k+\boldsymbol\pi)=-\widehat w(k)$ and
\eqref{eq:torus-green-convergence}, gives convergence of the last sum
to $I_w$, since the last term in the identity is bounded and continuous.
The torus Green-function/Riemann-sum convergence yields
\[
2\liminf_{\substack{L\to\infty\\L\in2\mathbb N}}
M^{\mathrm{mdd}}_{L,w}
\geq1-I_w,
\qquad
I_w:=
\int_{(-\pi,\pi]^d}
\frac{|\widehat w(k)|}{1-|\widehat w(k)|}
\frac{\mathrm dk}{(2\pi)^d}.
\]
By bipartiteness,
\[
g_{d,w}
=
\int_{(-\pi,\pi]^d}
\frac1{1-\widehat w(k)^2}\frac{\mathrm dk}{(2\pi)^d}.
\]
Using
$
\frac{|t|}{1-|t|}
=
\frac{t^2}{1-t^2}+\frac{|t|}{1-t^2}
$
and the Cauchy--Schwarz inequality gives
\[
I_w
\leq
g_{d,w}-1+
\sqrt{g_{d,w}\bigl(g_{d,w}-1\bigr)}
=
h_{d,w}-1.
\]
{Here and below,
$h_{d,w}:=g_{d,w}+\sqrt{g_{d,w}(g_{d,w}-1)}$.}
Consequently,
\begin{equation}\label{eq:LRO-bipartite-Cesàro}
\liminf_{\substack{L\to\infty\\L\in2\mathbb N}}
M^{\mathrm{mdd}}_{L,w}
\geq\frac{2-h_{d,w}}2.
\end{equation}
If \(g_{d,w}<4/3\), the right-hand side is positive, and
\eqref{eq:averaged-switching-consequence} immediately implies
\eqref{eq:macroscopicloops}, since
\[
\liminf_{\substack{L\to\infty\\L\in2\mathbb N}}
\frac{\mathbb E^{\mathrm{mdd}}_{L,w}|\mathcal L_0|}{|\T_L|}
\geq\frac{(2-h_{d,w})^2}{2}>0.
\]

It remains to prove the pointwise statements. By
\eqref{eq:complexalternation}, the families
$
F_x=S_x^1$
and 
$\tilde F_x=\sqrt2\,S_x^1\overline{S_x^2}
$
are reflection invariant. Corollary~\ref{cor:twopoint-spin} and
Proposition~\ref{prop:loop-connection-spin} identify the corresponding
two-point functions with \(G^{\mathrm{mdd}}_{L,w}\) and, respectively,
with the loop-connection probability on the odd sublattice.

The required uniform upper bounds follow from
Proposition~\ref{prop:probabilityestimate} for
\(G^{\mathrm{mdd}}_{L,w}\) and 
are trivial for the loop-connection probability.

Consequently Proposition~\ref{prop:monotonicity} applies to both functions. In
particular, their values at odd points of each coordinate axis are
symmetric and non-increasing in the torus distance.
Hence, the standard Cesàro-to-pointwise argument based on site monotonicity,
as in the proof of
\cite[Theorem~2.2(i)]{LeesTaggiJSP2021}, now yields constants \(C,c>0\)
such that, for every sufficiently large even \(L\) and every odd
\(n\in(0,cL)\),
$$
G^{\mathrm{mdd}}_{L,w}(ne_1)\geq C,
\qquad
\mathbb P^{\mathrm{mdd}}_{L,w}(ne_1\in\mathcal L_0)\geq C.
$$After decreasing the
constants if necessary, this proves both pointwise statements.
\end{proof}

\begin{rem}
\label{rem:sharper-LRO-condition}
The condition \(g_{d,w}<4/3\) is only a convenient sufficient criterion. The proof of Theorem~\ref{thm:longrangeorder} actually yields long-range order and macroscopic loops under the weaker condition
\begin{equation}
\label{eq:sharpquantity}
I_w:=
\int_{(-\pi,\pi]^d}
\frac{|\widehat w(k)|}{1-|\widehat w(k)|}
\frac{\mathrm dk}{(2\pi)^d}<1.
\end{equation}
The condition
\(g_{d,w}<4/3\) is then obtained from the Cauchy--Schwarz bound
\(I_w\leq h_{d,w}-1\).
\end{rem}

\begin{proof}[Proof of Theorem~\ref{thm:longrangeorder2}]
If $g_{d,w}=\infty$, the asserted lower bounds are vacuous; hence assume $g_{d,w}<\infty$.
{Set $N:=|\T_L|$.}
For \(A\in\{\mathrm{per},\mathrm{mdd}\}\), let
\[
p^A_{L,w}:=
\begin{cases}
0,&A=\mathrm{per},\\
\mathbb P^{\mathrm{mdd}}_{L,w}(n_0(m^1)=0),&A=\mathrm{mdd}.
\end{cases}
\]
Proposition~\ref{prop:derivationbound1} and Fourier inversion give
\[
1-p^A_{L,w}
=\sum_{x\in\T_L}w^{(L)}(x)G^A_{L,w}(x)
=M^A_{L,w}+
\frac1N\sum_{k\neq0}\widehat w(k)\widehat G^A_{L,w}(k).
\]
Since \(\widehat w(k)+a_w\geq0\), adding and subtracting \(a_w\) and
using \eqref{eq:LROinfrared} yields
\begin{align*}
1-p^A_{L,w}-M^A_{L,w}
&\leq\frac1N\sum_{k\neq0}
\frac{\widehat w(k)+a_w}{1-\widehat w(k)}
-a_w\bigl(G^A_{L,w}(0)-M^A_{L,w}\bigr)\\
&=(1+a_w)\frac1N\sum_{k\neq0}
\frac1{1-\widehat w(k)}-1+\frac1N
-a_w\bigl(G^A_{L,w}(0)-M^A_{L,w}\bigr).
\end{align*}
Thus
\begin{equation}\label{eq:LRO-finite-volume}
M^A_{L,w}\geq
\frac{2-p^A_{L,w}-N^{-1}+a_wG^A_{L,w}(0)}{1+a_w}
-\frac1N\sum_{k\neq0}\frac1{1-\widehat w(k)}.
\end{equation}
Hypothesis~\eqref{eq:torus-green-convergence} gives
\[
\frac1N\sum_{k\neq0}\frac1{1-\widehat w(k)}\longrightarrow g_{d,w}.
\]
Moreover, the proof of Proposition~\ref{prop:probabilityestimate},
applied to the periodised weights, gives
\[
p^{\mathrm{mdd}}_{L,w}
\leq\min\left\{1,\frac{e\,w^{(L)}(0)}{b(w)}\right\},
\qquad w^{(L)}(0)\longrightarrow w(0).
\]
Dropping the non-negative diagonal term in
\eqref{eq:LRO-finite-volume} and taking the limit proves
\eqref{eq:longrangequantity-per} and
\eqref{eq:longrangequantity-mdd}.

Let \(C_A>0\) denote the corresponding right-hand side. The diagonal
term does not affect the pointwise argument: for \(A=\mathrm{mdd}\),
\(G^{\mathrm{mdd}}_{L,w}(0)=0\); for \(A=\mathrm{per}\), subtracting
$G^{\mathrm{per}}_{L,w}(0)/N$ from \eqref{eq:LRO-finite-volume} leaves
the non-negative coefficient $a_w/(1+a_w)-1/N$ for all sufficiently
large $N$ when $a_w>0$, while for
\(a_w=0\) one has \(w(0)>0\) and
\[
w^{(L)}(0)G^{\mathrm{per}}_{L,w}(0)
=\mathbb P^{\mathrm{per}}_{L,w}(\pi(0)=0)\leq1.
\]
Hence
\[
\liminf_{\substack{L\to\infty\\L\in2\mathbb N}}
\frac1N\sum_{x\neq0}G^A_{L,w}(x)\geq C_A.
\]
Using Proposition~\ref{prop:probabilityestimate}, Proposition~
\ref{prop:monotonicity}, and the even-coordinate estimate of
\cite[Theorem~2.1]{LeesTaggiJSP2021}, summation over coordinate slices
gives
\[
\frac1N\sum_{x\neq0}G^A_{L,w}(x)
\leq\frac2L
\sum_{\substack{r\in\mathbb Z/L\mathbb Z\\r\ \mathrm{odd}}}
G^A_{L,w}(re_1)+\frac{e}{b(w)L}.
\]
The odd axis values are symmetric and non-increasing in the torus
distance. The claimed pointwise bound therefore follows by the standard
one-dimensional averaging argument, after choosing \(c>0\) sufficiently small.
\end{proof}

\section{Diagonal activity greater than \texorpdfstring{$1/2$}{1/2}}
\label{sect:onehalf}

\begin{proof}[Proof of Theorem~\ref{thm:decay-large-monomer}]
{Fix $A\in\{\mathrm{mdd},\mathrm{per}\}$ and suppress the superscript $A$ throughout the proof.}
For \(x\in\mathbb T_L\), throughout this proof,
\[
|x|_1:=\min_{m\in\mathbb Z^d}|\widetilde x+Lm|_1,
\]
where \(\widetilde x\) is any representative of \(x\). Write
\(w_L=w^{(L)}\) and set
\[
q_L(z):=\frac{w_L(z)}{w_L(0)}\mathbf 1_{\{z\ne0\}},\qquad
\vartheta:=\frac{1-w(0)}{w(0)}<1,\qquad
H_L(x):=\sum_{n\ge1}q_L^{*n}(x).
\]
Since \(w_L(0)\ge w(0)\) and \(\sum_z w_L(z)=1\),
\[
a_L:=\sum_{z\in\mathbb T_L}q_L(z)
=\frac{1-w_L(0)}{w_L(0)}\le\vartheta,
\qquad
\sum_{x\in\mathbb T_L}H_L(x)
=\sum_{n\ge1}a_L^n\le\frac{\vartheta}{1-\vartheta}.
\]

We first consider random lattice permutations. By symmetry, we may
regard \(G_{L,w}(x)\) as \(G_{L,w}(0,x)\). For \(x\ne0\), every
configuration contributing to this quantity contains a unique
self-avoiding directed path
\[
\gamma=(x_0,\ldots,x_n),\qquad x_0=0,\quad x_n=x.
\]
Once \(\gamma\) is fixed, its complement carries an arbitrary
permutation. Thus, with \(V(\gamma)=\{x_0,\ldots,x_n\}\),
\[
G_{L,w}(x)
=\sum_{n\ge1}
 \sum_{\substack{x_0=0,\;x_n=x\\x_0,\ldots,x_n\ {\rm distinct}}}
 \left(\prod_{i=0}^{n-1}w_L(x_{i+1}-x_i)\right)
 \frac{Z^{\rm per}_{L,w}(\mathbb T_L\setminus V(\gamma))}
      {Z^{\rm per}_{L,w}},
\]
where the numerator denotes the permutation partition function on the
induced subgraph. Replacing the vertices of \(\gamma\) by fixed points
gives
\[
w_L(0)^{n+1}
\frac{Z^{\rm per}_{L,w}(\mathbb T_L\setminus V(\gamma))}
     {Z^{\rm per}_{L,w}}\le1.
\]
Since a path has one more vertex than edges, dropping self-avoidance
yields
\begin{equation}
\label{eq:large-monomer-convolution}
G_{L,w}(x)\le\frac{1}{w_L(0)}H_L(x)
\le\frac{1}{w(0)}H_L(x).
\end{equation}

The same bound holds for the monomer double-dimer model. Indeed, every
configuration contributing to \(G_{L,w}(0,x)\) contains a unique
alternating self-avoiding path from \(0\) to \(x\). Its length is odd,
and its complement carries an arbitrary monomer double-dimer
configuration. Replacing the path vertices by monomers gives the same
partition-function bound as above; dropping the odd-length and
self-avoidance restrictions then gives
\eqref{eq:large-monomer-convolution}.

For the loop event, orient the cycle through \(0\) by its permutation
arrows in the RLP model and, in the monomer double-dimer model, by
following the blue edge out of \(0\). Splitting the cycle at \(x\),
replacing its vertices by fixed points or monomers, and dropping
simplicity and disjointness gives, for \(x\ne0\),
\[
\mathbb P_{L,w}(x\in\mathcal L_0)
\le H_L(x)H_L(-x)
\le\frac{\vartheta}{1-\vartheta}H_L(x).
\]
It therefore remains only to estimate \(H_L\).

For every \(r\ge0\), periodisation and the definition of the torus
distance give
\begin{equation}
\label{eq:periodised-tail}
\sup_L\sum_{\substack{z\in\mathbb T_L\\|z|_1\ge r}}q_L(z)
\le\frac{1}{w(0)}
\sum_{\substack{u\in\mathbb Z^d\\|u|_1\ge r}}w(u).
\end{equation}
If \(|z_1+\cdots+z_n|_1\ge R\), then
\(|z_i|_1\ge R/n\) for some \(i\). Hence, for every \(N\ge1\),
\[
\sum_{\substack{x\in\mathbb T_L\\|x|_1\ge R}}H_L(x)
\le\frac{1}{w(0)}
\sum_{n=1}^N n\vartheta^{n-1}
\sum_{\substack{u\in\mathbb Z^d\\|u|_1\ge R/n}}w(u)
+\frac{\vartheta^{N+1}}{1-\vartheta}.
\]
Letting first \(R\to\infty\) and then \(N\to\infty\) proves
\eqref{eq:qualitative-decay}.
The quantitative statements follow from the corresponding moment bounds for
$H_L$. Indeed, writing
$M_p:=\sup_L\sum_z(1+|z|_1)^p q_L(z)<\infty$, the triangle inequality gives
\[
\sum_x(1+|x|_1)^p H_L(x)
\leq M_p\sum_{n\geq1}n^{\max\{1,p\}}\vartheta^{n-1}<\infty.
\]
Under the exponential-moment assumption, choose $c>0$ so small that
\[ B(c):=w(0)^{-1}\sum_{z\ne0}e^{c|z|_1}w(z)<1. \]
Then
$\sum_x e^{c|x|_1}H_L(x)\leq B(c)/(1-B(c))$.
At $x=0$, enlarge the constants using
$G^{\mathrm{per}}_{L,w}(0)\leq w(0)^{-1}$,
$G^{\mathrm{mdd}}_{L,w}(0)=0$, and
$\mathbb P^A_{L,w}(0\in\mathcal L_0)=1$.
%
\end{proof}

\section{Absence of long-range order in dimension one:
proof of Theorem~\ref{thm:positive-activity-one-dimensional}}
\label{sect:decayonedimension}

{
We use the permutation representation described after
\eqref{eq:lazybipartitexcondition}: fixed points represent monomers, and
non-trivial cycles represent double-dimer loops of the same length.
On an interval, we show that the largest gap between common free cuts of
two independent permutations is negligible compared with the interval
length. A completion argument extends this estimate to partial bijections;
expanding the periodised weights then gives the required bounds on the
torus.
}

For later use, set
\[
 M_1:=\sum_{r\in\mathbb Z}|r|w(r)<\infty.
\]

\subsection{\texorpdfstring{Completion and interval partition functions}{Completion and interval partition functions}}

For a finite set \(V\), write \(\Omega_V^{\mathrm{per}}\) for the set of
permutations of \(V\). If \(A\) is a non-negative matrix indexed by \(V\),
define
\[
 \operatorname{wt}_A(P):=\prod_{x\in V}A(x,P(x)),\qquad
 \operatorname{per}A:=\sum_{P\in\Omega_V^{\mathrm{per}}}
 \operatorname{wt}_A(P).
\]
If \(\operatorname{per}A>0\), let
\[
 \mathbf P_A(P):=\frac{\operatorname{wt}_A(P)}{\operatorname{per}A}.
\]

For a finite interval \(I\subset\mathbb Z\), let
\[
 A_I(x,y):=w(y-x),\qquad
 Z_I:=Z^{\mathrm{per}}_{I,w}=\operatorname{per}A_I,
\]
and write \(I_m:=\{1,\ldots,m\}\), \(A_m:=A_{I_m}\),
\(Z_m:=Z_{I_m}\), with \(Z_0:=1\). We identify \(r\in I_L\) with its
residue class modulo \(L\), so that \(L\) represents the origin of
\(\mathbb T_L\). Define
\begin{equation}
 \widehat A_L(x,y):=w^{(L)}(y-x)
 =\sum_{k\in\mathbb Z}w(y-x+kL),\qquad
 \widehat Z_L:=\operatorname{per}\widehat A_L=Z^{\mathrm{per}}_{L,w}.\label{eq:pa:1}
\end{equation}
Since \(L\) is even, every {positive} off-diagonal entry of \(\widehat A_L\) joins
opposite parity classes. Moreover, \(kL\) is even for every \(k\ne0\), so
lazy bipartiteness gives \(w(kL)=0\) and hence
\(\widehat A_L(x,x)=w(0)\). The correspondence recalled above therefore
applies to \(\widehat A_L\). In particular,
\(Z^{\mathrm{mdd}}_{L,w}=\widehat Z_L\), and the permutation law with matrix
\(\widehat A_L\) is \(\mathbb P^{\mathrm{mdd}}_{L,w}\) under this
identification.

If \(A\) is indexed by \(V\) and \(I,J\subset V\) have the same cardinality,
write \(A^{I,J}\) for the matrix obtained by deleting rows \(I\) and columns
\(J\), and \(A^{x,y}:=A^{\{x\},\{y\}}\). For opposite-parity \(x,y\), the
correspondence identifies configurations contributing to the two-point
function with bijections between \(I_L\setminus\{x\}\) and
\(I_L\setminus\{y\}\). Since \(\widehat A_L\) is symmetric, the direction is
immaterial, and
\begin{equation}
 G^{\mathrm{mdd}}_{L,w}(x,y)
 =\frac{\operatorname{per}\widehat A_L^{x,y}}{\widehat Z_L}.\label{eq:pa:2}
\end{equation}
For \(I,J\subset V\), \(|I|=|J|=k\), let
\[
 \mathcal M_{I,J}:=\{M:V\setminus I\longrightarrow V\setminus J:
 M\text{ is a bijection}\},\qquad
 \operatorname{wt}_A(M):=\prod_{x\in V\setminus I}A(x,M(x)).
\]
We now introduce the positions which split an interval permutation into
smaller pieces. A point \(k\in\{0,\ldots,m\}\) is a \emph{free cut} of a
permutation \(\sigma\in\Omega_{I_m}^{\mathrm{per}}\) if
\begin{equation}
 \sigma(\{1,\ldots,k\})=\{1,\ldots,k\}.\label{eq:pa:10}
\end{equation}
Equivalently, no arrow crosses the cut between \(k\) and \(k+1\).
Consecutive free cuts divide the interval into boxes, and every permutation
cycle is contained in one box.

\begin{figure}[!ht]
\centering
\begin{tikzpicture}[x=0.68cm,y=0.62cm,>=stealth,
  vertex/.style={circle,draw,inner sep=1.15pt,font=\scriptsize}]
  \foreach \x in {1,...,12}{\node[vertex] (v\x) at (\x,0) {\x};}
  \draw[->,blue!70!black,bend left=36] (v1) to (v2);
  \draw[->,blue!70!black,bend left=36] (v2) to (v3);
  \draw[->,blue!70!black,bend left=36] (v3) to (v4);
  \draw[->,blue!70!black,bend left=46] (v4) to (v1);
  \draw[->,blue!70!black,bend left=42] (v5) to (v6);
  \draw[->,blue!70!black,bend left=42] (v6) to (v5);
  \draw[->,blue!70!black,bend left=36] (v7) to (v8);
  \draw[->,blue!70!black,bend left=36] (v8) to (v9);
  \draw[->,blue!70!black,bend left=36] (v9) to (v10);
  \draw[->,blue!70!black,bend left=46] (v10) to (v7);
  \foreach \x in {11,12}{
    \draw[->,blue!70!black,out=125,in=55,looseness=6] (v\x) to (v\x);}
  \foreach \c in {4.5,6.5,10.5,11.5}{
    \draw[densely dashed,red!70!black] (\c,-0.55)--(\c,1.45);
    \node[font=\scriptsize,red!70!black,rotate=90] at (\c,1.85) {free cut};}
  \node[font=\scriptsize] at (2.5,-0.75) {box};
  \node[font=\scriptsize] at (5.5,-0.75) {box};
  \node[font=\scriptsize] at (8.5,-0.75) {box};
  \node[font=\scriptsize] at (11.5,-0.75) {monomers};
\end{tikzpicture}
\caption{Free cuts are crossed by no permutation arrow and decompose the
interval into boxes. Every non-trivial displayed cycle alternates parity;
self-arrows represent monomers.}
\label{fig:free-cuts}
\end{figure}

{
The permanent minors describe partial bijections. The following completion
lemma allows us to use free cuts for these objects while preserving all
non-diagonal edges.
}

\begin{lem}[Completion of partial permutations]
\label{lem:pa-completion}
Let \(A\) be a finite symmetric non-negative matrix indexed by \(V\), with
\(A(x,x)=w(0)>0\), and let \(I,J\subset V\) satisfy
\(|I|=|J|=k\). There is an injection
\[
 \Phi_{I,J}:\mathcal M_{I,J}^2\longrightarrow
 (\Omega_V^{\mathrm{per}})^2
\]
such that, if \(\Phi_{I,J}(M,N)=(P,Q)\), then
\begin{equation}
 \operatorname{wt}_A(P)\operatorname{wt}_A(Q)
 =w(0)^{2k}\operatorname{wt}_A(M)\operatorname{wt}_A(N),\label{eq:pa:4}
\end{equation}
and every non-diagonal edge of \(M\cup N\) occurs in the undirected graph
\(P\cup Q\). Consequently,
\begin{equation}
 \operatorname{per}A^{I,J}\le w(0)^{-k}\operatorname{per}A.\label{eq:pa:5}
\end{equation}
More generally, if \(\mathcal E\subset\mathcal M_{I,J}\) and every pair
from \(\mathcal E^2\) is mapped into an event
\(\widehat{\mathcal E}\subset(\Omega_V^{\mathrm{per}})^2\), then, for the permutation
law \(\mathbf P_A\),
\begin{equation}
 \sum_{M\in\mathcal E}\operatorname{wt}_A(M)
 \le w(0)^{-k}\operatorname{per}A
 \sqrt{(\mathbf P_A\otimes\mathbf P_A)(\widehat{\mathcal E})}.\label{eq:pa:6}
\end{equation}
\end{lem}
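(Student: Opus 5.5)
The plan is to build $\Phi_{I,J}$ by a degree count followed by a canonical splitting of the superposition $M\cup N$. Afterwards \eqref{eq:pa:5} and \eqref{eq:pa:6} follow by summation and squaring.

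\emph{Degree count.} Fix $(M,N)\in\mathcal M_{I,J}^2$. View each of $M,N$ as an undirected multigraph on $V$, with a fixed point contributing a loop (degree $2$) and a transposition a doubled edge. In $M$, a vertex $v$ has out-degree $\mathbf 1_{\{v\notin I\}}$ and in-degree $\mathbf 1_{\{v\notin J\}}$. Hence in $M\cup N$ it has undirected degree
\[
4-2\cdot\mathbf 1_{\{v\in I\}}-2\cdot\mathbf 1_{\{v\in J\}}.
\]
Structurally, $M$ consists of cycles, isolated points of $I\cap J$, and $k':=|I\setminus J|$ directed paths from $J\setminus I$ to $I\setminus J$; the same holds for $N$.

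Add one loop at every vertex of $I$ and one at every vertex of $J$. These are $2k$ loops, with two loops at vertices of $I\cap J$. The resulting multigraph $\Gamma(M,N)$ is $4$-regular, loops counting $2$. A pair of permutations $(P,Q)$ of $V$ whose undirected union is $\Gamma(M,N)$ has weight
\[
\operatorname{wt}_A(P)\operatorname{wt}_A(Q)=w(0)^{2k}\operatorname{wt}_A(M)\operatorname{wt}_A(N),
\]
because $A$ is symmetric and $A(x,x)=w(0)$, so reorienting edges is free. Moreover, every non-diagonal edge of $M\cup N$ is kept. So everything reduces to splitting $\Gamma(M,N)$ canonically into two $2$-factors, each read as a permutation, so that $(M,N)$ can be recovered.

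\emph{Splitting.} Keep unchanged every cycle of $M$ (in $P$) and of $N$ (in $Q$) that meets no path. The remaining edges of $M$ and $N$ form the $2k'$ paths. Gluing them at their common endpoints in $I\triangle J$ gives closed chains that alternate between an $M$-path and a reversed $N$-path. On each such chain I would apply a colour switch in the spirit of the switching maps of Proposition~\ref{prop:monomerloop-lowerbound}. Precisely, I would cut the chain at its endpoints in $I\setminus J$ and $J\setminus I$. I would insert the added loops there, one in $P$ and one in $Q$, so that each endpoint vertex ends with $P$-degree $2$ and $Q$-degree $2$. Then I would assign the path segments alternately to $P$ and to $Q$ in a fixed order, starting from the smallest vertex of the chain in some fixed ordering of $V$. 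The two loops at each vertex of $I\cap J$ go one to $P$ and one to $Q$.

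\emph{Inversion and injectivity.} For the inverse, the diagonal entries of $P,Q$ at $I\cup J$ mark the chain endpoints. Because the segment assignment is canonical, each chain can be re-cut there and the $M$-paths separated from the $N$-paths. This recovers $(M,N)$, which gives injectivity. I expect this to be the main obstacle. The difficulty is to choose the assignment so that (a) both $P$ and $Q$ are genuine permutations, with every vertex having $P$-degree and $Q$-degree exactly $2$ and no odd cycles forced by the orientation; and (b) the inverse is well defined. Fixed points of $M$ or $N$ lying on a chain, and the marked vertices in $I\cap J$, can create spurious preimages. If the hand-made switching fails, my fallback is to take the $2$-factor decomposition of the $4$-regular graph $\Gamma$ given by Petersen's theorem via a fixed Euler-tour rule. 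Injectivity would then come from recording the marked set $I\cup J$ together with the deterministic rule.

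\emph{Consequences.} Summing \eqref{eq:pa:4} over $\mathcal M_{I,J}^2$ and using injectivity gives
\[
\bigl(\operatorname{per}A^{I,J}\bigr)^2\le w(0)^{-2k}(\operatorname{per}A)^2,
\]
which is \eqref{eq:pa:5}. For \eqref{eq:pa:6}, I would write $\bigl(\sum_{M\in\mathcal E}\operatorname{wt}_A(M)\bigr)^2$ as a sum over $\mathcal E^2$. Bounding it through $\Phi_{I,J}$ by $w(0)^{-2k}\sum_{(P,Q)\in\widehat{\mathcal E}}\operatorname{wt}_A(P)\operatorname{wt}_A(Q)$, this equals $w(0)^{-2k}(\operatorname{per}A)^2(\mathbf P_A\otimes\mathbf P_A)(\widehat{\mathcal E})$. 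Taking square roots gives \eqref{eq:pa:6}.
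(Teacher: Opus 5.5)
\paragraph{Assessment.}
Most of your argument is correct: the degree count, the $4$-regular augmentation $\Gamma(M,N)$ by $2k$ diagonal loops, the weight identity via symmetry of $A$, and the derivation of \eqref{eq:pa:5} and \eqref{eq:pa:6} from a weight-preserving injection. The gap is the splitting step, which is the whole content of the lemma. The rule you give there cannot work.

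\paragraph{Why the splitting fails.}
A vertex $v\in I\triangle J$ carries exactly one added loop, one $M$-edge and one $N$-edge. Requiring $P$- and $Q$-degree $2$ therefore forces the loop into one colour and \emph{both} path edges into the other. So adjacent segments of a chain must get the same colour: ``alternately'' fails, and ``one loop in $P$ and one in $Q$'' is impossible there.
\begin{itemize}
\item Take $V=\{1,2,3\}$, $I=\{1\}$, $J=\{2\}$, $M:2\mapsto3\mapsto1$ and $N:2\mapsto1,\ 3\mapsto3$. Your assignment gives vertex $1$ $P$-degree $1$ or $3$.
\item Giving a whole chain a single colour does not help either. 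Take $V=\{1,2,3,4\}$ with the same $I,J$, $M:2\mapsto3\mapsto1,\ 4\mapsto4$ and $N:2\mapsto4\mapsto3\mapsto1$. Vertex $3$, interior to both paths of the chain, gets degree $4$ in that colour. Moreover, the fixed point of $M$ at $4$ is a cycle meeting a path, a case your rule leaves unspecified.
\item The Petersen fallback cannot be injective. Any rule depending only on $\Gamma$ and the fixed sets $I,J$ (which carry no information) sends $(M,N)$ and $(N,M)$ to the same pair, e.g.\ in the three-vertex example.
\end{itemize}

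\paragraph{The missing idea.}
You need to split each vertex differently. The paper passes to the bipartite double cover.
\begin{itemize}
\item Draw $M$ as the matching $\{L_x,R_{M(x)}\}$ between two copies of $V$, and $N$ through its transpose, $\{L_{N(x)},R_x\}$.
\item Then $L_v$ and $R_v$ each have degree $2-\mathbf 1_{\{v\in I\}}-\mathbf 1_{\{v\in J\}}$.
\item Adding $L_vR_v$ once for $v\in I\triangle J$ and twice for $v\in I\cap J$ gives a $2$-regular bipartite multigraph, whose components are even cycles.
\item Cycles without added edges keep their colouring; the others are coloured alternately by a fixed rule. The two colour classes are perfect matchings, i.e.\ permutations, with orientation built in.
\end{itemize}
Injectivity is then easy. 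No original edge at $L_v$ or $R_v$ with $v\in I\cup J$ is diagonal, so the added edges are recognisable. After deleting them, the original colour at each path endpoint is prescribed by membership in $I$ or $J$, and alternation recovers $M$ and $N$.

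In your language, the double cover pairs, at each vertex, the outgoing $M$-edge with the incoming $N$-edge, and the incoming $M$-edge with the outgoing $N$-edge. Your chains instead follow $M$-paths and $N$-paths. With the double-cover pairing, every alternating recolouring automatically gives each vertex one outgoing and one incoming edge in each colour, which is exactly what your chains fail to guarantee.
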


\begin{proof}
Draw \(M\) as a red matching between left and right copies of \(V\), and draw
the transpose of \(N\) in blue. Symmetry of \(A\) ensures that transposing
\(N\) does not change its weight. For every \(v\in I\mathbin\triangle J\),
add one diagonal edge \(L_vR_v\), and for every \(v\in I\cap J\), add two
copies. The resulting uncoloured bipartite multigraph is
two-regular. On a cycle containing no added edge, retain the original
red-blue colouring. On every other cycle, colour the edges alternately,
choosing the colour of the first added edge by a fixed rule depending only
on the augmented uncoloured cycle and a fixed ordering of \(V\). The two
colour classes are full permutations \(P,Q\).

This construction is injective. Indeed, the added diagonal edges are
determined by \(I,J\): there is one at every vertex of \(I\triangle J\) and
two at every vertex of \(I\cap J\), and no original edge at such a vertex is
diagonal. After these edges are removed, each remaining path has an endpoint
at which the missing original colour is prescribed by membership in \(I\) or
\(J\); alternating the colours along the path therefore recovers its original
colouring. Cycles without added edges were left unchanged. Thus \(M\) and
the transpose of \(N\), and hence \(M,N\), are uniquely recovered from
\(P,Q\). Exactly \(2k\) diagonal edges of weight \(w(0)\) were added, which
proves \eqref{eq:pa:4}. Summation over all pairs, and then over
\(\mathcal E^2\), gives \eqref{eq:pa:5}--\eqref{eq:pa:6}.
\end{proof}

{
Write \(\mathbf P_m:=\mathbf P_{A_m}\). We first compare the partition
function of an interval with the product of those of its two parts.
}

\begin{lem}[Uniform factorisation]
\label{lem:pa-factorisation}
For \(m,n\ge0\),
\begin{equation}
 Z_mZ_n\le Z_{m+n}\le C_0Z_mZ_n,
 \qquad C_0:=\exp\!\left(\frac{M_1}{w(0)}\right).\label{eq:pa:9}
\end{equation}
\end{lem}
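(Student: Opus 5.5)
The lower bound $Z_mZ_n\le Z_{m+n}$ is immediate. Every pair $(\sigma,\tau)\in\Omega^{\mathrm{per}}_{I_m}\times\Omega^{\mathrm{per}}_{I_n}$ yields a permutation of $I_{m+n}$: act by $\sigma$ on $\{1,\ldots,m\}$ and by the shift of $\tau$ on $\{m+1,\ldots,m+n\}$. Translation invariance of $A_I(x,y)=w(y-x)$ gives this permutation weight $\operatorname{wt}(\sigma)\operatorname{wt}(\tau)$. The map is injective, and its image is exactly the set of permutations having $m$ as a free cut. Summing therefore gives $Z_mZ_n=\sum_{\sigma:\,m\text{ free cut}}\operatorname{wt}_{A_{m+n}}(\sigma)\le Z_{m+n}$.

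For the upper bound I would remove the arrows crossing the cut between $m$ and $m+1$ and pay for them with diagonal weights. Take $\sigma\in\Omega^{\mathrm{per}}_{I_{m+n}}$. Let $I\subset I_m$ be the set of points $x\le m$ with $\sigma(x)>m$, and let $J\subset I_m$ be the set of points $y\le m$ with $\sigma^{-1}(y)>m$. Conservation across the cut gives $|I|=|J|=:k$, and the same number $k$ of arrows go in each direction. Restricted to the left block, $\sigma$ is a bijection $I_m\setminus I\to I_m\setminus J$. Restricted to the right block it is a bijection $I'_n\setminus J'\to I'_n\setminus I'$, where $J'=\sigma^{-1}$-preimages on the right and $I'=\sigma(I)$, both of size $k$. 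The weight of $\sigma$ factorises as
\[
\operatorname{wt}(\sigma)=\operatorname{wt}(\sigma|_{\mathrm{left}})\,\operatorname{wt}(\sigma|_{\mathrm{right}})\prod_{\text{crossing arrows }e}w(e).
\]
I would now group permutations by the crossing data $\mathcal C$, namely the set of $2k$ crossing arrows with their endpoints. For fixed $\mathcal C$, the left and right restrictions range over $\mathcal M_{I,J}$ and the analogous set on the right. By the completion bound \eqref{eq:pa:5} of Lemma~\ref{lem:pa-completion} applied to $A_m$ (its diagonal is $w(0)$), we get $\operatorname{per}A_m^{I,J}\le w(0)^{-k}Z_m$, and similarly $\le w(0)^{-k}Z_n$ on the right. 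Hence
\[
Z_{m+n}\le Z_mZ_n\sum_{\mathcal C}\prod_{e\in\mathcal C}\frac{w(e)}{w(0)}.
\]

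The main obstacle is showing that the sum over crossing data is at most $\exp(M_1/w(0))$. A single arrow crossing the cut with displacement $r$ can sit in at most $|r|$ positions. Summing over one arrow, in either direction, therefore gives at most $\sum_r|r|w(r)/w(0)=M_1/w(0)$, with the factor of $2$ from the two directions absorbed by counting $r\in\mathbb Z$ with sign. Crossing data is a set of distinct arrows with distinct tails and heads. I would dominate the sum over sets of $j$ arrows by the unordered sum $\frac1{j!}\bigl(\sum_{\text{single arrows}}w/w(0)\bigr)^j$, dropping the compatibility constraints, which only enlarges the sum because all terms are nonnegative. Summing over $j$ then gives $\exp(M_1/w(0))=C_0$.

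The care needed is to ensure that the whole crossing configuration is counted as a set of arrows, so that the $1/j!$ is legitimate, and not as an ordered $k$-tuple of pairs. Counting it as a set makes the bound exactly $C_0$, without $C_0^2$. If a factor of $2$ is lost in the direction count, I would instead use that left-to-right arrows with displacement $r>0$ and right-to-left arrows with displacement $r<0$ together exhaust $\sum_{r\in\mathbb Z}|r|w(r)$ precisely.
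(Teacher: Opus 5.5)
Your proof is correct. The lower bound is the same as the paper's, but for the upper bound you take a different route.

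The paper interpolates $A(t)=B+tC$ between the block matrix (with $\operatorname{per}A(0)=Z_mZ_n$) and $A_{m+n}$. Multilinearity gives $\frac{d}{dt}\operatorname{per}A(t)=\sum_{x,y}C(x,y)\operatorname{per}A(t)^{x,y}$. The paper then applies Lemma~\ref{lem:pa-completion} only with one deleted row and column, to the full interpolated matrix $A(t)$, which is symmetric with diagonal $w(0)$ for every $t$. This gives $\frac{d}{dt}\log\operatorname{per}A(t)\le w(0)^{-1}\sum_{x,y}C(x,y)\le M_1/w(0)$, and integrating over $t\in[0,1]$ produces $C_0$.

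You instead expand $Z_{m+n}$ exactly over crossing arrow sets and apply the $k$-fold deletion bound \eqref{eq:pa:5} separately in each block. This is legitimate, since $A_m$ and the translate of $A_n$ are symmetric with diagonal $w(0)$. Your $|r|$-count of signed displacements is the paper's, and your $1/j!$ domination is valid. You could also drop all compatibility constraints at once via $\sum_F\prod_{e\in F}a_e=\prod_e(1+a_e)\le\exp(\sum_e a_e)$, which avoids the ordered-versus-unordered question entirely.

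The two arguments are closely related. Your expansion is just $\operatorname{per}A(t)$ written as a polynomial in $t$ and evaluated at $t=1$. It is also exactly the mechanism the paper uses later for the interval-to-torus comparison \eqref{eq:pa:20}--\eqref{eq:pa:22}, with $C$ in the role of $T_L$.

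The paper's logarithmic-derivative version is shorter: it needs only the single-deletion case of the completion lemma and no bookkeeping of compatible arrow sets. Yours is more explicit. In the product form above it also gives the marginally sharper intermediate bound $Z_{m+n}\le Z_mZ_n\prod_e\bigl(1+w(e)/w(0)\bigr)$, with the product over crossing arrows $e$.
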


\begin{proof}
The lower bound follows by retaining permutations preserving both intervals.
For the upper bound, let \(B\) contain the entries of \(A_{m+n}\) whose
endpoints lie in the same one of the intervals \(I_m\) and
\(\{m+1,\ldots,m+n\}\), let \(C:=A_{m+n}-B\), and set
\(A(t):=B+tC\). Thus
\(\operatorname{per}A(0)=Z_mZ_n\) and
\(\operatorname{per}A(1)=Z_{m+n}\). Since \(A(t)\) is symmetric and has
diagonal entries \(w(0)\),
\[
 \frac{d}{dt}\operatorname{per}A(t)
 =\sum_{x,y}C(x,y)\operatorname{per}A(t)^{x,y}.
\]
Lemma~\ref{lem:pa-completion} with one deleted row and column therefore gives
\[
 \frac{d}{dt}\log\operatorname{per}A(t)
 \le\frac1{w(0)}\sum_{x,y}C(x,y).
\]
For a fixed signed displacement \(r\), there are at most \(|r|\) ordered
pairs \((x,y)\) with \(y-x=r\) which cross the cut. Hence
\(\sum_{x,y}C(x,y)\le M_1\), and integration proves the claim.
\end{proof}

\subsection{\texorpdfstring{Common free cuts}{Common free cuts}}

{
For an ordered pair of permutations of \(I_m\), call a cut \emph{common}
if it is free for both permutations. Let \(c_m\) be the total product
weight of pairs with no common cut in \(\{1,\ldots,m-1\}\).
Decomposing at the first positive common cut gives
\begin{equation}
 Z_m^2=\sum_{r=1}^m c_rZ_{m-r}^2.\label{eq:pa:11}
\end{equation}
Under \(\mathbf P_L^{\otimes2}\), let \(D_L\) be the largest gap between
consecutive common cuts, including the endpoints \(0,L\), and put
\[
 \rho_L:=\mathbf E_{\mathbf P_L^{\otimes2}}[D_L/L].
\]

\begin{lem}[Common free cuts]
\label{lem:pa-renewal}
The limit \(\lambda:=\lim_{m\to\infty}Z_m^{1/m}\) exists in
\((0,\infty)\). With
\begin{equation}
 v_m:=Z_m^2\lambda^{-2m},\qquad q_m:=c_m\lambda^{-2m},\label{eq:pa:12}
\end{equation}
we have
\begin{equation}
 C_0^{-2}\le v_m\le1,\qquad
 \sum_{m\ge1}q_m=1,\qquad
 \sum_{m\ge1}m q_m\le C_0^2.\label{eq:pa:13}
\end{equation}
Moreover,
\begin{equation}
 \rho_L\longrightarrow0.\label{eq:pa:17}
\end{equation}
\end{lem}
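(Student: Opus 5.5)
The plan is to treat the common free cuts as a renewal process. First, Lemma~\ref{lem:pa-factorisation} gives $Z_mZ_n\le Z_{m+n}\le C_0Z_mZ_n$. Hence $\log Z_m$ is superadditive, and $\log(C_0Z_m)$ is subadditive. Fekete's lemma then shows that $\lambda=\lim Z_m^{1/m}$ exists, with
\[
\frac{Z_m}{\lambda^m}\le 1\le \frac{C_0Z_m}{\lambda^m}.
\]
Here $\lambda\ge Z_1=w(0)>0$, and $\lambda\le C_0Z_1<\infty$. This gives $C_0^{-1}\le Z_m\lambda^{-m}\le 1$, and squaring yields the bound on $v_m$ in \eqref{eq:pa:13}.

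Next we divide \eqref{eq:pa:11} by $\lambda^{2m}$ to get the renewal equation
\[
v_m=\sum_{r=1}^m q_r v_{m-r},\qquad v_0=1.
\]
Summing over $m$ against $s^m$ gives $V(s)=1/(1-Q(s))$ for $s<1$. Since $v_m\ge C_0^{-2}$, the series $V(s)$ diverges as $s\uparrow1$, which forces $Q(1)=\sum q_m=1$; we have $Q(1)\le1$ in any case, because otherwise $V$ would have a pole inside the disc while $v_m\le1$. For the mean, the renewal theorem gives $v_m\to 1/\mu$ with $\mu=\sum mq_m$. The sequence is aperiodic because $q_1=c_1\lambda^{-2}=w(0)^2\lambda^{-2}>0$, and the limit is $0$ if $\mu=\infty$. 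Since $v_m\ge C_0^{-2}$, we obtain $1/\mu\ge C_0^{-2}$, that is, $\mu\le C_0^2$.

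For $\rho_L\to0$, we interpret $\mathbf P_L^{\otimes2}$ through the renewal structure. The probability that the common cuts of a pair are exactly $0=t_0<t_1<\dots<t_j=L$ equals
\[
\frac{\prod_i c_{t_i-t_{i-1}}}{Z_L^2}=\frac{\prod_i q_{t_i-t_{i-1}}}{v_L}.
\]
So the gap structure is an i.i.d.\ renewal process with law $q$, conditioned on hitting $L$, an event of probability $v_L\ge C_0^{-2}$. It therefore suffices to show that, for the unconditioned renewal process with finite mean, the largest gap among those starting before $L$, with the last gap truncated at $L$, is $o(L)$ in $L^1$. Fix $\varepsilon>0$. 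The number of renewals in $[0,L]$ is at most $L$. A gap of size at least $\varepsilon L$ starting at some point has probability $\sum_{r\ge\varepsilon L}q_r$. Summing over starting renewal points and using the renewal density $\le1$, we get
\[
\mathbf P(D_L\ge\varepsilon L)\le L\sum_{r\ge\varepsilon L}q_r\le \varepsilon^{-1}\sum_{r\ge\varepsilon L}rq_r\to0
\]
by the finite mean. Since $D_L/L\le1$, it follows that $\rho_L\le\varepsilon+C_0^2\,\mathbf P(D_L\ge\varepsilon L)\to\varepsilon$.

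The main obstacle is justifying $\mu<\infty$ via the renewal theorem; in particular, the aperiodicity check $q_1>0$ must be verified. If one prefers to avoid the renewal theorem, an alternative is the elementary identity $\sum_{m\le n}v_m\sum_{r>n-m}q_r=1$. Combined with $v_m\ge C_0^{-2}$, this gives $C_0^{-2}\sum_{j<n}\sum_{r>j}q_r\le1$, and hence $\mu\le C_0^2$ directly. I would use this elementary route, and it needs no aperiodicity.
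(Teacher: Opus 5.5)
Your proposal is correct and follows essentially the same route as the paper. Fekete's lemma with the two-sided factorisation bound gives $\lambda$ and $C_0^{-2}\le v_m\le1$. The identity $V=(1-Q)^{-1}$ together with $v_m\ge C_0^{-2}$ gives $\sum_m q_m=1$ and the mean bound; your elementary convolution identity is the coefficientwise form of the paper's limit $\lim_{z\uparrow1}1/((1-z)V(z))\le C_0^2$. A first-moment count of long gaps under the law conditioned on $L$ being a cut then costs only the factor $1/v_L\le C_0^2$, which is the paper's bound $\mathbf E N_m\le C_0^2Lq_m$ phrased as a union bound over renewal points.
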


\begin{proof}
Lemma~\ref{lem:pa-factorisation} and Fekete's lemma give \(\lambda\),
and \(w(0)^m\le Z_m\le(\sum_rw(r))^m\) gives \(0<\lambda<\infty\).
Supermultiplicativity yields \(Z_m\le\lambda^m\). Iterating the reverse
bound in \eqref{eq:pa:9} gives
\(Z_{km}\le C_0^{k-1}Z_m^k\); taking the \(k\)-th root and letting
\(k\to\infty\) yields \(Z_m\ge C_0^{-1}\lambda^m\).
Thus \(C_0^{-2}\le v_m\le1\).

Set \(V(z):=\sum_{m\ge0}v_mz^m\) and
\(Q(z):=\sum_{m\ge1}q_mz^m\). Equation~\eqref{eq:pa:11} gives
\(V(z)=(1-Q(z))^{-1}\) for \(0<z<1\).
Since \(V(z)\ge C_0^{-2}/(1-z)\), we have \(Q(1)=1\), and
\[
 \sum_{m\ge1}m q_m
 =\lim_{z\uparrow1}\frac{1-Q(z)}{1-z}
 =\lim_{z\uparrow1}\frac1{(1-z)V(z)}\le C_0^2.
\]

Let \(N_m\) count the gaps of length \(m\) between consecutive common
cuts. Specifying such a gap leaves unrestricted pairs of permutations on
the intervals on either side, so
\begin{equation}
 \mathbf E_{\mathbf P_L^{\otimes2}}N_m
 =\frac{q_m}{v_L}
   \sum_{\substack{a,b\ge0\\a+b=L-m}}v_av_b
 \le C_0^2Lq_m.\label{eq:pa:16}
\end{equation}
For every \(\delta>0\), the deterministic bound
\(D_L\le\delta L+\sum_{m>\delta L}mN_m\) therefore gives
\[
 \rho_L\le\delta+C_0^2\sum_{m>\delta L}m q_m.
\]
Letting first \(L\to\infty\) and then \(\delta\downarrow0\) proves
\eqref{eq:pa:17}.
\end{proof}

Every component of the undirected union of the two interval permutations
lies between consecutive common cuts and thus has at most \(D_L\)
vertices. Each component obtained after adding \(k\) edges meets at most
\(k+1\) original components and thus has at most \((k+1)D_L\) vertices.
}

\subsection{Passage from the interval to the torus}

So far we have used the restriction of \(w\) to the interval \(I_L\).
For the periodised weight from Definition~\ref{def:periodicisation}, the
term with \(k=0\) is the interval weight, while the terms with \(k\ne0\)
form the boundary correction. We therefore decompose
the periodised matrix as
\begin{equation}
 \widehat A_L=A_L+T_L,\qquad
 T_L(x,y):=\sum_{k\in\mathbb Z\setminus\{0\}}w(y-x+kL).\label{eq:pa:18}
\end{equation}
Thus \(T_L\) collects exactly the additional arrows created by
periodisation. Its total mass is bounded uniformly in \(L\). Indeed,
counting a signed displacement \(r\) gives
\begin{equation}
 S_L:=\sum_{x,y\in I_L}T_L(x,y)
 =\sum_{r\in\mathbb Z}\min\{L,|r|\}w(r)\le M_1.\label{eq:pa:19}
\end{equation}
Multilinearity of the permanent yields an expansion over sets \(F\) of
arrows with pairwise distinct starting points and pairwise distinct end
points, each arrow carrying a factor \(T_L\). If \(U_F,V_F\) are their
starting-point and end-point sets and
\(\operatorname{wt}_T(F):=\prod_{(x,y)\in F}T_L(x,y)\), then
\begin{equation}
 \widehat Z_L=\sum_F\operatorname{wt}_T(F)
 \operatorname{per}A_L^{U_F,V_F}.\label{eq:pa:20}
\end{equation}
By \eqref{eq:pa:5}, \eqref{eq:pa:19}, and the fact that the full \(k\)-th power of \(S_L\) contains
every size-\(k\) partial matching in all \(k!\) orders,
\begin{equation}
 \frac1{Z_L}\sum_{|F|=k}\operatorname{wt}_T(F)
 \operatorname{per}A_L^{U_F,V_F}
 \le\frac1{k!}\left(\frac{M_1}{w(0)}\right)^k,\label{eq:pa:21}
\end{equation}
and therefore
\begin{equation}
 Z_L\le\widehat Z_L\le
 \exp\!\left(\frac{M_1}{w(0)}\right)Z_L.\label{eq:pa:22}
\end{equation}

Fix one of the sets \(F\) in \eqref{eq:pa:20}, with \(|F|=k\), and let
\(\mathcal E_F(t)\) be the partial
bijections \(M\in\mathcal M_{U_F,V_F}\) for which \(M\cup F\) contains a
non-trivial cycle of length at least \(t\). Every edge of such a cycle which
belongs to \(M\) is non-diagonal and is therefore retained by
Lemma~\ref{lem:pa-completion}. Thus, if
\(M,N\in\mathcal E_F(t)\), the graph \(P\cup Q\cup F\) obtained from that
lemma has a component of size at least \(t\). Since the \(k\) edges of \(F\)
can join at most \(k+1\) components of \(P\cup Q\), this implies
\(D_L\ge t/(k+1)\). Applying \eqref{eq:pa:6} termwise in
\eqref{eq:pa:20}, using the same counting argument as in
\eqref{eq:pa:21}, and using \(S_L\le M_1\) and
\(\widehat Z_L\ge Z_L\), we obtain
\begin{equation}
 \mathbb P^{\mathrm{mdd}}_{L,w}(L_{\max}\ge t)
 \le\sum_{k\ge0}\frac1{k!}\left(\frac{M_1}{w(0)}\right)^k
 \sqrt{\mathbf P_L^{\otimes2}
   \bigl(D_L\ge t/(k+1)\bigr)}.\label{eq:pa:23}
\end{equation}
{
For $0<t\le1$, the same bound follows from $D_L\ge1$.
Set
\[
 a:=M_1/w(0),\qquad
 C_1:=\sum_{k\ge0}\frac{a^k\sqrt{k+1}}{k!}<\infty.
\]
Integrating \eqref{eq:pa:23} with \(t=sL\), \(0<s<1\), and using
Cauchy--Schwarz gives
\begin{equation}
 \frac1L\mathbb E^{\mathrm{mdd}}_{L,w}L_{\max}
 \le C_1\sqrt{\rho_L}.\label{eq:pa:loop-mean}
\end{equation}
Indeed, the integral of the \(k\)-th square root is at most
\(\sqrt{\mathbf E_{\mathbf P_L^{\otimes2}}
 [\min\{1,(k+1)D_L/L\}]}\le\sqrt{(k+1)\rho_L}\).
}

It remains to control the averaged two-point function. If \(i\ne j\) have
the same parity, then no configuration contributing to the monomer
double-dimer two-point function exists, while
\(G^{\mathrm{mdd}}_{L,w}(i,i)=0\) by definition. Thus only
opposite-parity pairs contribute. For such a pair, if a set \(F\) occurs
when the minor in \eqref{eq:pa:2} is expanded, the deleted row and column
sets are
\[
 I_F:=U_F\cup\{i\},\qquad J_F:=V_F\cup\{j\},\qquad
 |I_F|=|J_F|=k+1.
\]
Every \(M\in\mathcal M_{I_F,J_F}\), together with \(F\), contains an
undirected \(i\)-to-\(j\) path: after the arrows of \(F\) are added, it is a
bijection from \(I_L\setminus\{i\}\) to \(I_L\setminus\{j\}\). This open
path cannot contain a diagonal arrow of \(M\), since such an arrow is a
separate one-cycle. Hence all its \(M\)-edges are retained by
Lemma~\ref{lem:pa-completion}.
Therefore \eqref{eq:pa:6} gives
\begin{equation}
 \sum_{M\in\mathcal M_{I_F,J_F}}\operatorname{wt}_{A_L}(M)
 \le w(0)^{-(k+1)}Z_L
 \sqrt{(\mathbf P_L\otimes\mathbf P_L)
       (i\leftrightarrow j\text{ in }P\cup Q\cup F)}.\label{eq:pa:24}
\end{equation}
For each realization, if \(\mathcal K\) denotes the components of
\(P\cup Q\cup F\), then
\[
 \sum_{i,j}\mathbf 1_{\{i\leftrightarrow j\}}
 =\sum_{K\in\mathcal K}|K|^2
 \le L(k+1)D_L.
\]
By translation invariance,
\[
 M^{\mathrm{mdd}}_{L,w}
 =\frac1{L^2}\sum_{i,j\in I_L}G^{\mathrm{mdd}}_{L,w}(i,j).
\]
For {each} fixed \(F\), {enlarge the sum over admissible opposite-parity
endpoints to all ordered pairs \((i,j)\in I_L^2\); all added terms are
non-negative.} Jensen's inequality and the preceding deterministic bound
give
\[
 \frac1{L^2}\sum_{i,j}
 \sqrt{(\mathbf P_L\otimes\mathbf P_L)
 (i\leftrightarrow j\text{ in }P\cup Q\cup F)}
 \le
 \sqrt{(k+1)\rho_L}.
\]
Using this inequality in the expansion over \(F\), and then using the same
counting argument as in \eqref{eq:pa:21} together with \eqref{eq:pa:19},
yields
{
\begin{equation}
 M^{\mathrm{mdd}}_{L,w}
 \le\frac{C_1}{w(0)}\sqrt{\rho_L}.\label{eq:pa:25}
\end{equation}
By \eqref{eq:pa:17}, both \eqref{eq:pa:loop-mean} and
\eqref{eq:pa:25} tend to zero. The remaining conclusions follow from
Markov's inequality and \(|\mathcal L_0|\le L_{\max}\), completing the
proof of Theorem~\ref{thm:positive-activity-one-dimensional}.
}

\section{Long-range order and macroscopic loops in dimension one:
proof of Theorem~\ref{thm:zero-activity-one-dimensional}}
\label{sect:proof-zero-activity-one-dimensional}

The nearest-neighbour case illustrates the main idea of the proof.
Since the monomer activity is zero, every configuration consists of two
perfect matchings. For even \(L\geq4\), there are only two
nearest-neighbour perfect matchings on the cycle \(\T_L\). If the two
matchings are different, their union is a single loop containing every
vertex.

{
For general weights, two independent matchings have opposite winding signs
with probability $1/2$. On this event some loop winds around the torus;
the first-moment assumption prevents its displacement from being carried
by too few long edges.

Fix an even $L$, represent $\T_L$ by $\{0,\ldots,L-1\}$, and let $B,R$
be the two independent perfect matchings.
For each dimer of $D\in\{B,R\}$, with even endpoint $x$ and odd endpoint $y$,
choose an integer $r_D(x)\equiv y-x\pmod L$, independently conditionally on
$(B,R)$, with distribution
\[
\Pr\bigl(r_D(x)=r\mid B,R\bigr)
=\frac{w(r){\mathbf1_{\{r\equiv y-x\pmod L\}}}}
       {w^{(L)}_{x,y}}{,\qquad r\in\mathbb Z}.
\]
Write $\mathbb P,\mathbb E$ for this enlarged finite-volume law.
Orient blue dimers from even to odd and red dimers from odd to even,
and let $r_x$ be the signed displacement of the arrow starting at $x$.
}

\begin{proof}
{Fix an odd integer \(a\) such that \(w(a)>0\), which exists by
normalisation and bipartiteness. For every even \(L\), the edges joining
each even vertex \(x\) to \(x+a\) form a perfect matching of positive
weight, so the finite-volume partition functions are positive.}
For \(D\in\{B,R\}\), set
\[
 \Phi_L(D):=\frac1L\sum_{x\in\T_L^{\mathrm e}}r_D(x).
\]
Since \(D\) matches every even vertex to a different odd vertex,
\[
 \sum_{x\in\T_L^{\mathrm e}}r_D(x)
 \equiv
 \sum_{y\in\T_L^{\mathrm o}}y-
 \sum_{x\in\T_L^{\mathrm e}}x
 =\frac L2\pmod L,
\]
and hence \(\Phi_L(D)\in\mathbb Z+\frac12\). Since \(w(r)=w(-r)\), reflection
at the origin preserves the law and replaces every chosen integer by its
negative. Thus \(\Phi_L(D)\) and \(-\Phi_L(D)\) have the same law.
{
Their values are nonzero, and $B,R$ are independent. Hence the event
\begin{equation}\label{eq:matching-sums-differ}
\mathcal A_L:=\{\Phi_L(B)\Phi_L(R)<0\}
\qquad\text{satisfies}\qquad
\mathbb P(\mathcal A_L)=\frac12.
\end{equation}
}

By construction,
\[
 \frac1L\sum_{x\in\T_L}r_x=\Phi_L(B)-\Phi_L(R).
\]
If \(C\) is a permutation cycle, following all its arrows returns to the
starting vertex modulo \(L\), so
\(\sum_{x\in C}r_x\in L\mathbb Z\). The sum over all cycles is
\(L(\Phi_L(B)-\Phi_L(R))\). On $\mathcal A_L$, it is non-zero, so some cycle \(C\) satisfies
\(\sum_{x\in C}r_x\ne0\), and hence
\begin{equation}
 \sum_{x\in C}|r_x|
 \ge\left|\sum_{x\in C}r_x\right|
 \ge L.
 \label{eq:cycle-distance}
\end{equation}

We next show that this distance cannot usually be produced by a few very long
arrows. At zero activity the monomer double-dimer measure is the
double-dimer measure. Hence Proposition~\ref{prop:monomerloop-lowerbound},
applied to the periodised weights, gives for opposite-parity \(x,y\)
\[
 \bigl({w^{(L)}(a)}\bigr)^2\mathcal C_{L,w}(x,y)^2
 \le \mathbb P^{\mathrm{mdd}}_{L,w}(x\leftrightarrow y)\le1.
\]
Since \({w^{(L)}(a)\ge w(a)}\),
\begin{equation}
 \mathcal C_{L,w}(x,y)\le\frac1{{w(a)}}.
 \label{eq:uniform-dimer-source}
\end{equation}
Deleting the dimer for which the chosen integer is \(r\) gives, for
\(x\in\T_L^{\mathrm e}\) and with \(x+r\) understood modulo \(L\),
\begin{equation}
 \mathbb P\bigl(r_D(x)=r\bigr)
 =w(r)\mathcal C_{L,w}(x,x+r)
 \le\frac{w(r)}{{w(a)}}.
 \label{eq:dimer-step-bound}
\end{equation}

{
Choose $K\ge1$ such that
\[
\sum_{|r|>K}|r|w(r)\le\frac{w(a)}8,
\qquad \varepsilon:=\frac1{2K},
\]
and set $T_K:=\sum_{x\in\T_L}|r_x|\mathbf1_{\{|r_x|>K\}}$.
The two matchings contain $L$ dimers in total, so
\eqref{eq:dimer-step-bound} and Markov's inequality give
\begin{equation}\label{eq:long-arrow-tail}
\mathbb ET_K\le\frac L8,
\qquad \mathbb P(T_K>L/2)\le\frac14.
\end{equation}
}
{
On $\mathcal A_L\cap\{T_K\le L/2\}$, the cycle in
\eqref{eq:cycle-distance} satisfies
\[
K|C|\ge\sum_{x\in C}|r_x|-T_K\ge L/2.
\]
Consequently, \eqref{eq:matching-sums-differ} and
\eqref{eq:long-arrow-tail} imply
}
\begin{equation}
 \mathbb P^{\mathrm{mdd}}_{L,w}
 (L_{\max}\ge\varepsilon L)\ge\frac14.
 \label{eq:macroscopic-loop-zero}
\end{equation}

By translation invariance,
\begin{align}
 \mathbb P^{\mathrm{mdd}}_{L,w}(|\mathcal L_0|\ge\varepsilon L)
 &=\mathbb E^{\mathrm{mdd}}_{L,w}
 \left[\frac1L\sum_{x\in\T_L}
 \mathbf1_{\{|\mathcal L_x|\ge\varepsilon L\}}\right] \notag\\
 &\ge\varepsilon\,
 \mathbb P^{\mathrm{mdd}}_{L,w}(L_{\max}\ge\varepsilon L)
 \ge\frac\varepsilon4,
 \label{eq:tagged-loop-zero}\\
 \frac1L\mathbb E^{\mathrm{mdd}}_{L,w}|\mathcal L_0|
 &\ge\varepsilon\,
 \mathbb P^{\mathrm{mdd}}_{L,w}
 (|\mathcal L_0|\ge\varepsilon L)
 \ge\frac{\varepsilon^2}{4}.
 \label{eq:tagged-loop-mean-zero}
\end{align}
{
It remains to prove long-range order. The reverse estimate in
Proposition~\ref{prop:monomerloop-lowerbound}, applied to the periodised
weights, gives, for opposite-parity vertices,
\begin{equation}\label{eq:source-from-connection}
\mathcal C_{L,w}(x,y)
\ge w^{(L)}(a)\mathbb P^{\mathrm{mdd}}_{L,w}(x\leftrightarrow y)
\ge w(a)\mathbb P^{\mathrm{mdd}}_{L,w}(x\leftrightarrow y).
\end{equation}
}

By \eqref{eq:twopointandmonomer},
\(G^{\mathrm{mdd}}_{L,w}=\mathcal C_{L,w}\) for opposite-parity endpoints,
whereas \(G^{\mathrm{mdd}}_{L,w}=0\) for equal-parity endpoints. Moreover,
every bipartite loop contains equally many even and odd vertices.
Consequently, \eqref{eq:source-from-connection} and
\eqref{eq:tagged-loop-mean-zero} give the third assertion:
\begin{align}
 M^{\mathrm{mdd}}_{L,w}
 &=\frac1L\sum_{y\in\T_L^{\mathrm o}}\mathcal C_{L,w}(0,y)
 \ge\frac{{w(a)}}L\sum_{y\in\T_L^{\mathrm o}}
 \mathbb P^{\mathrm{mdd}}_{L,w}(0\leftrightarrow y) \notag\\
 &=\frac{{w(a)}}{2L}
 \mathbb E^{\mathrm{mdd}}_{L,w}|\mathcal L_0|
 \ge\frac{{w(a)}\varepsilon^2}{8}.
 \label{eq:averaged-two-point-zero}
\end{align}
Thus {all} finite-volume claims hold with
\[
 c:=\min\left\{\frac14,\frac{{w(a)}\varepsilon^2}{8}\right\}.
\]

\end{proof}

{
\subsection{Bi-infinite cycles in subsequential limits}

The finite-volume argument also yields a statement on $\mathbb Z$.
Using the displacements sampled above and representatives
$x\in\{0,\ldots,L-1\}$, define
\[
\pi_L(x+kL):=x+kL+r_x,\qquad k\in\mathbb Z.
\]
This is an $L$-periodic bijection of $\mathbb Z$; denote its law by $\mu_L$.
For a bijection $\sigma$, write
$\mathcal L_x(\sigma):=\{\sigma^n(x):n\in\mathbb Z\}$.
We equip the space of bijections with the local topology: both $\sigma$
and $\sigma^{-1}$ converge pointwise, with $\mathbb Z$ given the discrete
topology.

\begin{cor}[Bi-infinite cycles]\label{cor:bi-infinite-one-dimensional}
Under the assumptions of Theorem~\ref{thm:zero-activity-one-dimensional},
the family $(\mu_L)_{L\in2\mathbb N}$ is tight.
There exists $c=c(w)>0$ such that every subsequential limit $\mu$ as
$L\to\infty$ through even integers satisfies
\[
\mu\bigl(|\mathcal L_0|=\infty\bigr)\ge c.
\]
\end{cor}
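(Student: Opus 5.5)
Tightness comes first. Under \(\mu_L\), \(\pi_L(x)-x=r_{x\bmod L}\), and by \eqref{eq:dimer-step-bound} we have \(\mathbb P(|r_x|>K)\le w(a)^{-1}\sum_{|r|>K}w(r)\), uniformly in \(L\) and \(x\). This holds for both forward arrows and backward arrows, since every vertex is the start of exactly one arrow and the end of exactly one arrow, and both are dimer displacements. The displacements \(\pi_L(x)-x\) and \(\pi_L^{-1}(x)-x\) are therefore uniformly tight for each fixed \(x\). A diagonal argument over the countably many coordinates then gives tightness in the local topology, because the product of the discrete spaces \(\mathbb Z\) (for \(\sigma\) and \(\sigma^{-1}\)) is Polish. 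We also have to check that limit points are bijections. The relations \(\sigma^{-1}(\sigma(x))=x\) are closed conditions involving finitely many coordinates once the displacements are bounded, so they pass to the limit.

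For the infinite cycle, the plan is to show that \(\mu_L(0 \text{ reaches distance } \ge R \text{ along its cycle})\ge c\) uniformly in \(L\ge L_0(R)\), for every \(R\). The event \(E_R:=\{\exists n:\ |\sigma^n(0)|\ge R\}\), when intersected with bounded jump sizes, is a countable union of cylinder events. It is open, so the portmanteau theorem gives \(\mu(E_R)\ge\limsup\mu_L(E_R)\ge c\). Since \(E_R\) decreases in \(R\), we get \(\mu(\bigcap_R E_R)\ge c\), and a cycle visiting arbitrarily far points is infinite. To bound \(\mu_L(E_R)\), we use \eqref{eq:tagged-loop-zero}, which gives \(\mathbb P(|\mathcal L_0|\ge\varepsilon L)\ge\varepsilon/4\) for the torus loop. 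The lifted orbit of \(0\) under \(\pi_L\) projects onto the torus cycle of \(0\), so it visits at least \(\varepsilon L\) distinct residues. These residues are distinct integers, so for \(L\) large compared with \(R\) one of them has absolute value \(\ge R\). This is deterministic, because at most \(2R+1\) integers lie in \((-R,R)\). We therefore obtain \(\mu_L(E_R)\ge\varepsilon/4\) once \(\varepsilon L>2R+1\), and we may take \(c=\varepsilon/4\).

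The main obstacle is the openness of \(E_R\) in the local topology. \(E_R\) is a union over \(n\) of the events \(\{|\sigma^n(0)|\ge R\}\), and each of these is a finite union of cylinders \(\{\sigma(x_0)=x_1,\dots,\sigma(x_{n-1})=x_n\}\). Cylinders are open because coordinates are discrete, so \(E_R\) is open and the portmanteau lower bound applies with no extra tightness estimate. I would also check that it is enough to use forward iterates only: if the orbit has finitely many points, then the forward orbit is periodic and bounded, so the argument is consistent. Finally, \(\mathcal L_0(\sigma)\) is infinite exactly when the orbit is unbounded, which completes the proof.
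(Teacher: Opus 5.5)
Your tightness argument is fine: the displacements $\pi_L(x)-x$ and $\pi_L^{-1}(x)-x$ are tight coordinatewise, and the bijection relations are closed. Your finite-volume bound $\mu_L(E_R)\ge\varepsilon/4$ for $\varepsilon L>2R+1$ is also fine, since the lifted forward orbit of $0$ visits at least $\varepsilon L$ distinct integers and so must leave $(-R,R)$. This pigeonhole step is a legitimate substitute for the paper's observation that a lifted torus cycle either keeps its length or becomes infinite.

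The gap is in the passage to the limit. The portmanteau theorem gives $\limsup_j\mu_{L_j}(F)\le\mu(F)$ for closed $F$, and $\mu(G)\le\liminf_j\mu_{L_j}(G)$ for open $G$. Openness of $E_R$ therefore only bounds $\mu(E_R)$ from above, because mass on an open event can be lost in the limit. So the inference ``$E_R$ is open, hence $\mu(E_R)\ge\limsup_j\mu_{L_j}(E_R)$'' is invalid. The paragraph you single out as the main obstacle proves the wrong property.

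What you need is that $E_R$ is closed. This holds on the closed subspace of bijections, where $\mu_L$ and $\mu$ live, and your own remark about bounded forward orbits is the key.
\begin{itemize}
\item If the forward orbit of $0$ under a bijection $\sigma$ stays in $(-R,R)$, it is a finite cycle $C\subset(-R,R)$ through $0$.
\item Hence $E_R^c$ is a finite union of cylinder events: one for each ordered sequence $0=c_0,c_1,\ldots,c_{p-1}$ of distinct points of $(-R,R)$, namely the event $\{\sigma(c_i)=c_{i+1}\text{ for } i<p-1,\ \sigma(c_{p-1})=0\}$.
\item So $E_R^c$ is open and $E_R$ is clopen.
\end{itemize}
With this, portmanteau applies in the correct direction and the rest of your argument goes through. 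The paper does exactly this with the event $\{|\mathcal L_0|\ge m\}$, which is clopen because it is equivalent to pairwise distinctness of $0,\sigma(0),\ldots,\sigma^{m-1}(0)$.

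A minor point: without bounded jumps, $\{|\sigma^n(0)|\ge R\}$ is a countable, not finite, union of cylinders. This does not affect the argument.
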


\begin{proof}
Fix an odd $a$ with $w(a)>0$, as above.
By \eqref{eq:dimer-step-bound} and symmetry, uniformly in $L$ and
$x\in\mathbb Z$,
\[
\mu_L\bigl(|\pi_L(x)-x|>K\bigr)
\le\frac1{w(a)}\sum_{|r|>K}w(r)\longrightarrow0.
\]
Interchanging the two matchings, together with their chosen displacements,
sends $\pi_L$ to $\pi_L^{-1}$, so the same estimate holds for the inverse.
Given $\eta>0$, choose integers $K_x$ with
\[
\frac2{w(a)}\sum_{x\in\mathbb Z}\sum_{|r|>K_x}w(r)<\eta.
\]
The bijections satisfying $|\sigma(x)-x|\le K_x$ and
$|\sigma^{-1}(x)-x|\le K_x$ for every $x$ form a compact set in the
local topology; the union bound gives this set $\mu_L$-probability at
least $1-\eta$. This proves tightness.

Let $\mu_{L_j}\Rightarrow\mu$, where $L_j\to\infty$, and choose
$\varepsilon>0$ as in \eqref{eq:tagged-loop-zero}.
The lift of a torus cycle has the same length if its total displacement
is zero, and is infinite otherwise. Therefore, for every fixed $m$ and
all sufficiently large $j$, \eqref{eq:tagged-loop-zero} gives
\[
\mu_{L_j}\bigl(|\mathcal L_0|\ge m\bigr)\ge\frac\varepsilon4.
\]
The event $\{|\mathcal L_0|\ge m\}$ is both open and closed in the local
topology{. Indeed, it is equivalent to the pairwise distinctness of
$0,\sigma(0),\ldots,\sigma^{m-1}(0)$, and the finite-iterate maps are
continuous in this topology.}
Passing to the limit and then letting $m\to\infty$ yields
$\mu(|\mathcal L_0|=\infty)\ge\varepsilon/4$.
Every infinite orbit of a bijection is bi-infinite.
\end{proof}
}

\section{Appendix}
\label{sect:appendix}

\subsection{OS-positivity: examples and technical lemmas}
\label{sect:proofOSpositivity}

We first record a moment criterion for $\ell^1$-radial weights. Notice that
the diagonal value does not enter the OS quadratic form: if
$x,y\in\Z^d_{i,+}$, then $x\neq\theta_i y$. Thus an arbitrary non-negative
constant diagonal weight may be added.

\begin{lem}
\label{lem:moment-OS}
Let $j:\N_{>0}\to[0,\infty)$ and suppose that there exists a finite positive
measure $\mu$ on $[-1,1]$ such that
\begin{equation}\label{eq:moment_rep_minus1_1}
j(n+1)=\int_{-1}^{1}t^n\,\mu(\mathrm dt),\qquad n\ge0.
\end{equation}
For any $d\ge1$, define
\[
w_{x,y}=j(|x-y|_1),\qquad x\neq y,
\]
and set $w_{x,x}=w(0)$ for an arbitrary constant $w(0)\ge0$. Then $w$ is
OS-positive in the sense of Definition~\ref{def:OSpos-quadratic}.
\end{lem}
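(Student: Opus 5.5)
Fix a direction $i$ and write each point of $\mathbb Z^d_{i,+}$ as $x=(n,x')$ with $n=x\cdot e_i\ge1$ and $x'\in\mathbb Z^{d-1}$ the remaining coordinates. Then $\theta_i y=(1-m,y')$ for $y=(m,y')$, and
\[
|x-\theta_i y|_1=(n+m-1)+|x'-y'|_1,
\]
where $n+m-1\ge1$, so $x\neq\theta_i y$. This is why the diagonal value plays no role. Substituting the moment representation with exponent $N:=n+m-2+|x'-y'|_1$ in \eqref{eq:moment_rep_minus1_1}, the quadratic form in \eqref{eq:OSpositivity} becomes
\[
\int_{-1}^1\sum_{x,y}\overline{f(x)}f(y)\,t^{\,n-1}t^{\,m-1}\,t^{|x'-y'|_1}\,\mu(\mathrm dt).
\]
Since $f$ has finite support, the sum is finite and can be exchanged with the integral. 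It therefore suffices to prove that the integrand is non-negative for every fixed $t\in[-1,1]$.

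For fixed $t$, set $g_t(x'):=\sum_{n\ge1}t^{n-1}f(n,x')$, a finitely supported function on $\mathbb Z^{d-1}$. Because $t$ is real, $\overline{t^{n-1}}=t^{n-1}$, so the integrand equals
\[
\sum_{x',y'\in\mathbb Z^{d-1}}\overline{g_t(x')}\,g_t(y')\,t^{|x'-y'|_1}.
\]
The task thus reduces to showing that the kernel $K_t(x',y')=t^{|x'-y'|_1}$ is positive semidefinite on $\mathbb Z^{d-1}$ for every $t\in[-1,1]$. For $d=1$ the kernel is trivial (there are no transverse coordinates), and the integrand is $|g_t|^2\ge0$.

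To handle $K_t$, I would factorise $t^{|z|_1}=\prod_{j}t^{|z_j|}$ over coordinates. By the Schur product theorem (a tensor product of PSD kernels is PSD), it is enough to treat the one-dimensional kernel $k\mapsto t^{|k|}$ on $\mathbb Z$. This step is the main obstacle, in particular for negative $t$. For $|t|<1$ I would use Bochner's theorem:
\[
\sum_{k\in\mathbb Z}t^{|k|}e^{\mathrm i k\theta}=\frac{1-t^2}{1-2t\cos\theta+t^2},
\]
and the right-hand side is strictly positive because $1-2t\cos\theta+t^2\ge(1-|t|)^2>0$. Thus the kernel is PSD for both signs of $t$. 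The endpoint cases follow directly. For $t=1$ the kernel is constant $1$, which is PSD. For $t=-1$ it equals $(-1)^{k}=(-1)^{x'_j}(-1)^{y'_j}$, a rank-one PSD kernel. Alternatively, both endpoints follow from the $|t|<1$ case by continuity in $t$. Combining the coordinate factors, $K_t$ is PSD, so the integrand is non-negative pointwise in $t$. Integrating against the positive measure $\mu$ then gives \eqref{eq:OSpositivity} for every $i$, which proves the lemma.
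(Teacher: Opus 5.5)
Your proof is correct and follows essentially the same route as the paper. Both arguments substitute the moment representation using $|x-\theta_i y|_1=(n+m-1)+|x'-y'|_1$, absorb the normal coordinate into $g_t(x')=\sum_{n\ge1}t^{n-1}f(n,x')$, and reduce to positive semidefiniteness of $t^{|x'-y'|_1}$ on $\mathbb Z^{d-1}$. That property is checked via the nonnegative Fourier transform $\frac{1-t^2}{1-2t\cos\theta+t^2}$ for $|t|<1$ and rank-one kernels at $t=\pm1$. The only cosmetic difference is that you factor into one-dimensional kernels via the Schur product theorem, whereas the paper writes the $(d-1)$-dimensional Fourier transform directly as the corresponding product.
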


\begin{proof}
Fix $i$ and, after relabelling coordinates, take $i=1$. Write
$x=(m,u)$ and $y=(n,v)$, where $m,n\ge1$ and
$u,v\in\Z^{d-1}$. Since
\[
|x-\theta_1y|_1=m+n-1+|u-v|_1,
\]
\eqref{eq:moment_rep_minus1_1} gives
\[
w_{x,\theta_1y}
=\int_{-1}^{1}t^{m-1}t^{n-1}t^{|u-v|_1}\,\mu(\mathrm dt).
\]
For every $t\in[-1,1]$, the kernel
$K_t(u,v):=t^{|u-v|_1}$ is positive semidefinite on $\Z^{d-1}$. Indeed,
for $|t|<1$ its Fourier transform is
\[
\prod_{r=1}^{d-1}\frac{1-t^2}{1-2t\cos k_r+t^2}\ge0,
\]
whereas for $t=\pm1$ the assertion follows directly, since the corresponding
kernels have rank one. Hence, with
\[
F_t(u):=\sum_{m\ge1}f(m,u)t^{m-1},
\]
we obtain
\begin{align*}
&\sum_{x,y\in\Z^d_{1,+}}
\overline{f(x)}\,w_{x,\theta_1y}\,f(y)\\
&\qquad=\int_{-1}^{1}\sum_{u,v\in\Z^{d-1}}
\overline{F_t(u)}K_t(u,v)F_t(v)\,\mu(\mathrm dt)\ge0.
\end{align*}
The same argument applies to every coordinate direction.
\end{proof}

The passage from a one-dimensional reflection-positive profile to an
$\ell^1$-radial interaction in arbitrary dimension is also given in
\cite[Proposition~3.1(i)]{AizenmanFernandez}.

We now verify OS-positivity for the cases \emph{(A)}--\emph{(G)} in the
list of admissible weights.

\begin{enumerate}
\item[(A)] \textit{Nearest neighbours:}
Fix $i$ and, after relabelling coordinates, take $i=1$. For
$x,y\in\Z^d_{1,+}$,
\[
w_{x,\theta_1y}
=\frac1{2d}\,
\mathbf1_{\{x_1=y_1=1\}}\mathbf1_{\{x_\perp=y_\perp\}}.
\]
Consequently,
\[
\sum_{x,y\in\Z^d_{1,+}}\overline{f(x)}w_{x,\theta_1y}f(y)
=\frac1{2d}\sum_{u\in\Z^{d-1}}|f(1,u)|^2\ge0.
\]
For reflection positivity of the corresponding nearest-neighbour random-path
model, see also \cite[Theorem~4.3]{T}.

\item[(B)] \textit{Positive diagonal activity, first and second nearest
neighbours:}
Again only $x_1=y_1=1$ contributes. Writing $g(u):=f(1,u)$, the OS
quadratic form equals
\[
\beta\sum_u|g(u)|^2
+\gamma\sum_{\substack{u,v\in\Z^{d-1}\\|u-v|_2=1}}
\overline{g(u)}g(v).
\]
By Fourier transformation this is
\[
\int_{[-\pi,\pi]^{d-1}}
\left(\beta+2\gamma\sum_{r=1}^{d-1}\cos k_r\right)
|\widehat g(k)|^2\,\frac{\mathrm dk}{(2\pi)^{d-1}}\ge0,
\]
because
\[
\beta+2\gamma\sum_{r=1}^{d-1}\cos k_r
\ge\beta-2(d-1)\gamma\ge0
\]
by the parameter restriction in case~\emph{(B)}. For $d=1$ the
second-neighbour shell is empty and, with case~\emph{(B)} understood as
$\rho+2\beta=1$, the assertion is immediate. The diagonal term
$\rho\delta_0$ does not contribute. The same parameter range for $d\ge2$ is
treated in \cite[Remark~4.5(1)]{BiskupChayesCrawford}; cf.
\cite[Proposition~3.4]{FrohlichLiebSimon} for the general
reflection-positive construction.

\item[(C)] \textit{Exponential decay:}
For $n\ge0$,
\[
c_d(\rho,\gamma)e^{-\gamma(n+1)}
=\int_0^1t^n\,c_d(\rho,\gamma)e^{-\gamma}
\delta_{e^{-\gamma}}(\mathrm dt).
\]
Lemma~\ref{lem:moment-OS} and the preceding observation about the diagonal
prove the claim.
This exponential $\ell^1$-interaction is also treated in
\cite[Remark~4.5(2)]{BiskupChayesCrawford}; see
\cite{FrohlichLiebSimon} for the classical reflection-positivity framework.

\item[(D)] \textit{Polynomial decay:}
For $q=1$, put $c=(1-\rho)/Z_{s,1}$. Since
\[
\frac{c}{(n+1)^s}
=\int_0^1t^n\,\frac{c}{\Gamma(s)}
(-\log t)^{s-1}\,\mathrm dt,
\]
Lemma~\ref{lem:moment-OS} applies.
For the same $\ell^1$-power-law class, see also
\cite[Remark~4.5(3)]{BiskupChayesCrawford}.

For $q=2$, the case $d=1$ is identical to $q=1$. Suppose $d\ge2$, put
\[
D:=d-1,\qquad \alpha:=\frac{s}{2},\qquad
\nu:=\alpha-\frac D2=\frac{s-d+1}{2}>\frac12,
\]
and define
\[
g_a(z):=(a^2+|z|_2^2)^{-\alpha},
\qquad a>0,\quad z\in\R^D.
\]
For $p\neq0$, the Fourier--Bessel formula and the integral representation
of the modified Bessel function give
\begin{align*}
\widehat g_a(p)
&=\frac{2\pi^{D/2}}{\Gamma(\alpha)}
\left(\frac{|p|_2}{2a}\right)^\nu K_\nu(a|p|_2)\\
&=c_{D,s}|p|_2^{2\nu}
\int_1^\infty e^{-a|p|_2r}
(r^2-1)^{\nu-\frac12}\,\mathrm dr,
\end{align*}
where $c_{D,s}>0$; the value at $p=0$ is obtained by continuity.
Fourier inversion, with $a=m+n-1$, therefore gives
\begin{align*}
&\sum_{m,n\ge1}\sum_{u,v\in\Z^D}
\overline{f(m,u)}g_{m+n-1}(u-v)f(n,v)\\
&\quad=c'_{D,s}\int_{\R^D}\int_1^\infty
|p|_2^{2\nu}(r^2-1)^{\nu-\frac12}
\left|
\sum_{m\ge1}\sum_{u\in\Z^D}
f(m,u)e^{-(m-\frac12)|p|_2r}e^{-\mathrm ip\cdot u}
\right|^2
\,\mathrm dr\,\mathrm dp\ge0,
\end{align*}
with $c'_{D,s}>0$. Thus $|x|_2^{-s}$ is OS-positive. Multiplication by
$(1-\rho)/Z_{s,2}$ and addition of $\rho\delta_0$ preserve this property.
For earlier constructions of Euclidean power-law reflection-positive
interactions, see \cite[Section~3, in particular (3.1)]{AizenmanFernandez}
and \cite{FrohlichLiebSimon}.

\item[(E)] \textit{Regularised polynomial decay:}
Put $c=(1-\rho)/Z_{s,\gamma}$. For $n\ge0$,
\[
\frac{c}{(1+\gamma(n+1))^s}
=\int_0^1t^n\,
\frac{c\gamma^{-s}}{\Gamma(s)}
t^{1/\gamma}(-\log t)^{s-1}\,\mathrm dt.
\]
The measure on the right is finite and positive, so
Lemma~\ref{lem:moment-OS} applies.
See also \cite[Section~3, Eq.~(3.2)]{AizenmanFernandez}.

\item[(F)] \textit{Bipartite projections:}
This follows from Lemma~\ref{lem:bipartite-projection} below, provided that
$b(w)>0$.

\item[(G)] \textit{Convex combinations:}
The OS quadratic form is linear in $w$. Hence every non-negative linear
combination of OS-positive weights is OS-positive; convex combinations also
preserve normalization.
This closure property is also stated in
\cite[Corollary~3.6(2)]{FrohlichLiebSimon}.
\end{enumerate}

{We now} verify \eqref{eq:torus-green-convergence} for {the weights in}
Examples~\emph{(A)}--{\emph{(E)}}, {their bipartite projections in~\emph{(F)},
and finite convex combinations of these weights in~\emph{(G)}.
For each of the individual kernels,} whenever $g_{d,w}<\infty$, the
Fourier estimates {below} give
\[
1-\widehat w(k)\ge c|k|_2^\alpha
\]
near zero{,} for some $\alpha<d${. Indeed, for $d\ge3$ the positive
nearest-neighbour weights give this bound with $\alpha=2$. For
$d\in\{1,2\}$, the transient members of the displayed families are
the power-law kernels with $d<s<2d$, and their bipartite projections;
the estimates below apply with $\alpha=s-d<d$.
Consequently, for all sufficiently small $\delta>0$,}
\[
\sup_{L\in2\mathbb N}\frac1{L^d}
\sum_{\substack{k\in\T_L^*\\0<|k|_2\le\delta}}
\frac1{1-\widehat w(k)}\le C\delta^{d-\alpha}{.}
\]
{Irreducibility makes the denominator strictly positive away from zero,
so ordinary} Riemann-sum convergence {there proves
\eqref{eq:torus-green-convergence}}.

For {a} finite convex {combination}, a transient component with positive
coefficient supplies the same {lower} bound{. If all components are
recurrent, then $d\le2$. For these particular families, the elementary
estimate $1-\cos(k\cdot x)\le\min\{2,|k|_2^2|x|_2^2/2\}$ gives,
for every recurrent component and $0<|k|_2\le1$,
\[
1-\widehat w(k)\le
\begin{cases}
C|k|_2,&d=1,\\
C|k|_2^2\log(e/|k|_2),&d=2.
\end{cases}
\]
These upper bounds are preserved by finite convex combinations and
force the Green integral to diverge. Such combinations are therefore}
recurrent and impose no further requirement in
\eqref{eq:torus-green-convergence}. {The assertions here concern the
displayed families; the OS-positivity closure properties in
\emph{(F)} and~\emph{(G)} alone are not used to infer
\eqref{eq:torus-green-convergence} for arbitrary kernels.}

\begin{lem}[Bipartite projection preserves OS-positivity]
\label{lem:bipartite-projection}
Let $w$ be OS-positive and define
\[
w^b_{x,y}:=w_{x,y}\mathbf1_{\{|x-y|_1\ {\rm odd}\}}.
\]
Then $w^b$ is OS-positive. Consequently, if $w$ is translation invariant and
summable and $b(w)>0$, then the normalized projection
\[
w^{\mathrm o}:=b(w)^{-1}w^b
\]
is OS-positive.
\end{lem}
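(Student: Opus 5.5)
The strategy is to decompose the OS quadratic form of $w^b$ according to the parities of the transverse and normal coordinates, and to realise $w^b$ as an average of $w$ and a sign-twisted copy of $w$. Fix a direction $i$ and write $\chi(x):=(-1)^{|x|_1}$. Since $\mathbf1_{\{|x-y|_1\ \mathrm{odd}\}}=\tfrac12\bigl(1-\chi(x)\chi(y)\bigr)$, we have
\[
w^b_{x,y}=\tfrac12 w_{x,y}-\tfrac12\chi(x)\chi(y)w_{x,y}.
\]
The second term is not obviously sign-definite. The key observation is that the reflection $\theta_i$ reverses the parity of the $i$-th coordinate shifted by one. Indeed, $|\theta_i y|_1\equiv |y|_1+1 \pmod 2$, since $(\theta_i y)_i=1-y_i\equiv y_i+1$.

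For $x,y\in\mathbb Z^d_{i,+}$ this gives
\[
\chi(x)\chi(\theta_i y)=-\chi(x)\chi(y).
\]
Hence
\[
w^b_{x,\theta_i y}=\tfrac12 w_{x,\theta_i y}+\tfrac12\chi(x)\chi(y)\,w_{x,\theta_i y}.
\]

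Now plug this into the quadratic form \eqref{eq:OSpositivity} for a finitely supported $f$ on $\mathbb Z^d_{i,+}$. The first term is $\tfrac12$ times the OS form of $w$ evaluated at $f$, which is non-negative by assumption. The second term equals $\tfrac12$ times the OS form of $w$ evaluated at $g:=\chi f$. This works because $\chi$ is real, so $\overline{g(x)}\,g(y)=\chi(x)\chi(y)\overline{f(x)}f(y)$. That form is again non-negative by OS-positivity of $w$, and $g$ is still finitely supported on $\mathbb Z^d_{i,+}$. Summing the two terms proves OS-positivity of $w^b$, for every $i$.

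For the consequence, note that $w^b$ is translation invariant, symmetric and summable whenever $w$ is. Multiplying by the positive constant $b(w)^{-1}$ preserves the sign of the quadratic form, since the form is linear in the weight. So $w^{\mathrm o}$ is OS-positive and normalized. The diagonal plays no role: $x\neq\theta_i y$ for $x,y\in\mathbb Z^d_{i,+}$, as already noted before Lemma~\ref{lem:moment-OS}, and in any case $w^b_{x,x}=0$.

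The only step requiring care is the parity identity for $\theta_i$, together with checking that the convention $\theta_i x=x-2(x\cdot e_i)e_i+e_i$ indeed flips $|\cdot|_1$-parity. Once that is verified, the rest is the two-line decomposition above. I expect no real obstacle beyond confirming that the twisted test function $\chi f$ is admissible in Definition~\ref{def:OSpos-quadratic}, which it is because that definition quantifies over all finitely supported complex $f$.
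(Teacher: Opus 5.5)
Your proof is correct and is essentially the paper's argument. Both rest on the parity flip $\chi(\theta_i y)=-\chi(y)$ and on applying OS-positivity of $w$ to two admissible test functions. The paper splits $f$ into its parity components $f_\pm$ and writes the form of $w^b$ as $Q_w(f_+)+Q_w(f_-)$, where $Q_w$ denotes the OS form of $w$. Your expression $\tfrac12 Q_w(f)+\tfrac12 Q_w(\chi f)$ is the same quantity by the parallelogram identity, since $f=f_++f_-$ and $\chi f=f_+-f_-$.
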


\begin{proof}
Fix $i$ and set
\[
\chi(x):=(-1)^{|x|_1},\qquad
f_\sigma(x):=f(x)\mathbf1_{\{\chi(x)=\sigma\}},
\quad \sigma\in\{\pm1\}.
\]
Since $\theta_i$ reverses parity,
\[
\chi(\theta_i y)=-\chi(y),
\]
and hence
\[
|x-\theta_i y|_1\ {\rm odd}
\quad\Longleftrightarrow\quad
\chi(x)=\chi(y).
\]
Therefore, for every finitely supported $f:\Z^d_{i,+}\to\C$,
\begin{align*}
&\sum_{x,y\in\Z^d_{i,+}}
\overline{f(x)}\,w^b_{x,\theta_i y}\,f(y)\\
&\qquad=
\sum_{\sigma\in\{\pm1\}}
\sum_{x,y\in\Z^d_{i,+}}
\overline{f_\sigma(x)}\,w_{x,\theta_i y}\,f_\sigma(y)\ge0.
\end{align*}
Thus $w^b$ is OS-positive. Multiplication by the positive constant
$b(w)^{-1}$ proves the normalized assertion.
\end{proof}

\begin{proof}[{Proof of Lemma} \ref{lem:OS-periodisation}]
This is the standard stability of reflection positivity under periodic
boundary conditions; see \cite[Proposition~3.4]{FrohlichLiebSimon} for the
one-dimensional construction and \cite[Lemma~4.4]{BiskupChayesCrawford} for the
multidimensional torus formulation. We include a short proof since the
latter reference omits the details.

By translation invariance and relabelling the coordinates, it suffices
to consider \(i=1\) and the torus reflection represented by
\(\theta_0(x_1,x')=(1-x_1,x')\). Let
\[
R_{L,+}:=\{x\in\mathbb Z^d:1\le x_1\le L/2,\ 0\le x_j<L,\
j=2,\ldots,d\},
\]
which we identify with \(\mathbb T_{L,1,+}\). For \(a\in\mathbb Z\), set
\[
\theta_a(x_1,x'):=(2a+1-x_1,x'),\qquad
H_a^+:=\{x:x_1>a\},\quad H_a^-:=\{x:x_1\le a\}.
\]
Since \(\theta_a\) is an integer translate of \(\theta_0\), translation
invariance implies OS-positivity with respect to \(\theta_a\) on
\(H_a^+\). It also holds on \(H_a^-\). Indeed, if \(F\) is supported in
\(H_a^-\), then \(G(u):=\overline{F(\theta_a u)}\) is supported in
\(H_a^+\), and symmetry of \(w\) gives
\[
\sum_{u,v\in H_a^+}\overline{G(u)}w_{u,\theta_a v}G(v)
=
\sum_{x,y\in H_a^-}\overline{F(x)}w_{x,\theta_a y}F(y)\ge0.
\]

We first record a transverse periodisation argument. Suppose that
\(R_{L,+}\) lies in one of the two half-lattices of \(\theta_a\), and set
\[
K_a(x,y):=\sum_{z'\in\mathbb Z^{d-1}}
w_{x,\theta_a y+L(0,z')},\qquad x,y\in R_{L,+}.
\]
We claim that \(K_a\) is positive semidefinite. Let
\(B_N:=\{-N,\ldots,N\}^{d-1}\) and define
\[
F_N\bigl(x+L(0,z')\bigr):=|B_N|^{-1/2}f(x),
\qquad x\in R_{L,+},\ z'\in B_N,
\]
and \(F_N=0\) elsewhere. Its support lies in the same half-lattice as
\(R_{L,+}\), so OS-positivity yields
\[
0\le
\sum_{x,y\in R_{L,+}}\overline{f(x)}f(y)
\sum_{z'\in\mathbb Z^{d-1}}\lambda_N(z')
w_{x,\theta_a y+L(0,z')},
\]
where
\[
\lambda_N(z')
:=\frac{|B_N\cap(B_N-z')|}{|B_N|}
\in[0,1],\qquad \lambda_N(z')\longrightarrow1.
\]
Since \(w\) is summable, dominated convergence gives
\[
\sum_{x,y\in R_{L,+}}\overline{f(x)}K_a(x,y)f(y)\ge0.
\]

For \(r\in\mathbb Z\), let \(a_r:=Lr/2\in\mathbb Z\). Since
\(\theta_{a_r}y=\theta_0y+Lre_1\), periodisation gives
\[
w^{(L)}_{x,\Theta y}
=\sum_{r\in\mathbb Z}\sum_{z'\in\mathbb Z^{d-1}}
w_{x,\theta_{a_r}y+L(0,z')}
=\sum_{r\in\mathbb Z}K_{a_r}(x,y).
\]
If \(r\le0\), then \(R_{L,+}\subset H_{a_r}^+\), whereas for \(r\ge1\),
\(R_{L,+}\subset H_{a_r}^-\). Hence every \(K_{a_r}\) is positive
semidefinite. Absolute convergence therefore implies
\[
\sum_{x,y\in R_{L,+}}
\overline{f(x)}w^{(L)}_{x,\Theta y}f(y)
=
\sum_{r\in\mathbb Z}
\sum_{x,y\in R_{L,+}}
\overline{f(x)}K_{a_r}(x,y)f(y)\ge0,
\]
which is \eqref{eq:OS-periodised}.
\end{proof}

\subsection{Proofs of the long-range-order corollaries}
\label{sect:proof-LRO-corollaries}

We first treat the finite-range family. We then prove the four
Fourier estimates used for Examples~\emph{(C)}--\emph{(E)}.

\begin{proof}[Proof of Corollary~\ref{cor:firstsecondneighbours}]
{
Let \(p_d\) be the probability kernel obtained from the weights in
Example~\emph{(B)} by setting \(\rho=0\) in
\eqref{eq:choiceB}. Its Fourier transform is
$
\widehat p_d(k)
=
\frac{2}{3d}\sum_{i=1}^d\cos k_i
+
\frac{2}{3d(d-1)}
\sum_{1\leq i<j\leq d}\cos k_i\cos k_j.
$
Moreover,
$
\widehat p_d(k)+\frac13
=
\frac{2}{3d(d-1)}
\sum_{1\leq i<j\leq d}
(1+\cos k_i)(1+\cos k_j).
$
Consequently,
$
\min_{k\in[-\pi,\pi]^d}\widehat p_d(k)=-\frac13.
$
For \(d\geq4\), \(\widehat p_d\) is the average of the \(d\)
copies of \(\widehat p_{d-1}\) obtained by deleting one coordinate.
Applying Jensen's inequality to the function
$
t\longmapsto\frac{1}{1+\varepsilon-t}
$
and then letting \(\varepsilon\downarrow0\) gives
$
g_{d,p_d}\leq g_{d-1,p_{d-1}}.
$
For \(d=3\),
$
\widehat p_3(k)
=
\frac23\left(\frac13\sum_{i=1}^3\cos k_i\right)
+
\frac13\left(
\frac13\sum_{1\leq i<j\leq3}\cos k_i\cos k_j
\right).
$
Applying the same regularized Jensen argument and using Watson's
evaluations, we obtain
$$
\begin{aligned}
g_{3,p_3}
&\leq
\frac{2}{3\pi^3}
\int_{[0,\pi]^3}
\frac{\mathrm dk}
{1-\frac13\sum_{i=1}^3\cos k_i}
\\
&\quad+
\frac{1}{3\pi^3}
\int_{[0,\pi]^3}
\frac{\mathrm dk}
{1-\frac13\sum_{1\leq i<j\leq3}
\cos k_i\cos k_j}
\\
&<
\frac23\frac{1517}{1000}
+
\frac13\frac{269}{200}
<
\frac32;
\end{aligned}
$$
see \cite{Watson1939}. It follows that
$$
g_{d,p_d}<\frac32
\qquad\text{for every }d\geq3.
$$

For \(0\leq\rho<1\), the weights in the corollary can be written as
$
w=\rho\delta_0+(1-\rho)p_d.
$
They are normalized, symmetric, and translation invariant, and they
are irreducible because \(\beta>0\). Furthermore,
$
\gamma=\frac{\beta}{2(d-1)},
$
so their OS-positivity follows from
Section~\ref{sect:proofOSpositivity}. Thus, these weights are
admissible for every \(\rho\in[0,1)\).
For these weights,
$
b(w)=\frac{2(1-\rho)}{3}>0,
$
and
$
g_{d,w}=\frac{g_{d,p_d}}{1-\rho},
a_w=\max\left\{0,\frac{1-4\rho}{3}\right\},
q_w=\min\left\{1,\frac{3e\rho}{2(1-\rho)}\right\}.
$
At \(\rho=0\), the right-hand sides of
\eqref{eq:longrangequantity-per} and
\eqref{eq:longrangequantity-mdd} are both equal to
$
\frac32-g_{d,p_d}>0.
$
Both expressions are continuous functions of \(\rho\) in a
neighbourhood of zero. Therefore, there exists
\(\rho_0=\rho_0(d)\in(0,1)\) such that they are both strictly positive
for every \(\rho\in[0,\rho_0)\). The conclusion follows from
Theorem~\ref{thm:longrangeorder2}.}
\end{proof}

\begin{lem}[Fourier estimates for Examples~\emph{(C)}--\emph{(E)}]
\label{lem:examples-C-E-fourier}
For the weights \(w\) from Examples~\emph{(C)}--\emph{(E)} at
\(\rho=0\) and their bipartite projections \(w^{\mathrm{o}}\) from
Example~\emph{(F)}, the following limits hold:
\begin{equation}\label{eq:examples-C-E-limits}
 g_{d,w}\to1,\qquad \min_k\widehat w(k)\to0,\qquad
 \widehat w(\boldsymbol\pi)\to0,\qquad g_{d,w^{\mathrm{o}}}\to1,
\end{equation}
Here the minimum is over \([-\pi,\pi]^d\). These limits hold as
\(\gamma\downarrow0\) in Example~\emph{(C)}, where \(d\ge3\); as
$s\downarrow d$ in Example~\emph{(D)}, for $q\in\{1,2\}$ and every $d\ge1$;
and as \(\gamma\downarrow0\) in
Example~\emph{(E)}, where \(s\) is fixed and
\(d<s<2d\) for \(d\in\{1,2\}\), while \(s>d\) for \(d\ge3\).
\end{lem}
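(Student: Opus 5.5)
The plan is to exploit the Laplace-type representation already used in Section~\ref{sect:proofOSpositivity} to check OS-positivity. For Examples~\emph{(C)}, \emph{(E)} and \emph{(D)} with $q=1$, one has $w(x)=j(|x|_1)$ for $x\ne0$, with
\[
j(n+1)=\int_0^1 t^n\,\mu(\mathrm dt)
\]
for an explicit positive measure $\mu$. It depends on $\gamma$ or $s$, and its mass escapes to $t=1$ in the relevant limit.

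\textbf{Closed form for $\widehat w$.} Since $\sum_{x\in\mathbb Z^d}t^{|x|_1}e^{\mathrm ik\cdot x}=\prod_{r=1}^d P_t(k_r)$ with Poisson kernel $P_t(\theta)=\frac{1-t^2}{1-2t\cos\theta+t^2}\ge0$, Fubini gives
\[
\widehat w(k)=\int_0^1 t^{-1}\Bigl(\prod_{r=1}^dP_t(k_r)-1\Bigr)\mu(\mathrm dt),
\qquad
1=\int_0^1 t^{-1}\Bigl(\Bigl(\tfrac{1+t}{1-t}\Bigr)^d-1\Bigr)\mu(\mathrm dt).
\]

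\textbf{The two limits not involving $g$.} Positivity of $P_t$ gives $\min_k\widehat w(k)\ge-\int t^{-1}\mu(\mathrm dt)$. This is comparable to $j(1)=w(e_1)$ plus the contribution of $t$ near $0$. Both tend to zero because the normalisation forces $\mu$ to have vanishing mass away from $t=1$; this yields the second limit. For the third, $P_t(\pi)=\frac{1-t}{1+t}$, so $|\widehat w(\boldsymbol\pi)|\le\int t^{-1}\bigl((\tfrac{1-t}{1+t})^d+1\bigr)\mu(\mathrm dt)$. The same argument shows this is negligible; indeed $\widehat w(\boldsymbol\pi)$ is exponentially small relative to the normalising integral.

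For $q=2$ in Example~\emph{(D)}, the Bessel representation from the OS-positivity proof (Poisson summation in the transverse directions, or directly in $\mathbb R^d$) replaces the Poisson kernel by a positive kernel. Its Fourier transform is nonnegative up to an error of order $w(e_1)$. I would treat this case by comparing $\widehat w$ with the continuum Fourier transform of $|x|_2^{-s}$, which is $c_s|k|^{s-d}$ up to a smooth correction.

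\textbf{The limit $g_{d,w}\to1$.} Write $g_{d,w}-1=\int\frac{\widehat w}{1-\widehat w}\frac{\mathrm dk}{(2\pi)^d}$. Away from the origin, on $|k|_2\ge\delta$, I expect $\widehat w(k)\to0$ uniformly. This is because $\prod_rP_t(k_r)\to0$ uniformly on that set as $t\uparrow1$, while $\mu$ concentrates there; together with $\min\widehat w\to0$, this gives uniform convergence of the integrand to $0$. Near the origin I need a uniform lower bound $1-\widehat w(k)\ge c|k|_2^{\alpha}$ with $\alpha<d$ and $c$ independent of the parameter, at least for parameters close to the limit. Then $\int_{|k|\le\delta}\frac{\mathrm dk}{1-\widehat w}\le C\delta^{d-\alpha}$, and letting first the parameter go to its limit and then $\delta\downarrow0$ finishes.

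For Example~\emph{(C)} with $d\ge3$, I would take $\alpha=2$. The bound should follow from $1-\widehat w(k)\ge\sum_xw(x)(1-\cos k\cdot x)$ restricted to $|x|\lesssim1/|k|$, giving a comparison with a rescaled transient walk. For Examples~\emph{(D)} and \emph{(E)}, I would take $\alpha=s-d$, which is less than $d$ exactly under the stated range of $s$. The bound comes from the regularised power-law tail: $1-\widehat w(k)\gtrsim |k|^{s-d}$ for $|k|\ge\gamma$. For $|k|<\gamma$, a scale-$\gamma$ Gaussian-type lower bound applies, and its contribution is of order $\gamma^{d-\alpha}\to0$.

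\textbf{Main obstacle.} I expect the uniform small-$k$ lower bound to be the hardest step, especially for \emph{(D)} as $s\downarrow d$, where the normalising constant blows up. My first attempt would be the explicit Laplace representation, using $1-\prod_rP_t(k_r)\ge c\min\{1,|k|^2/(1-t)^2\}$ integrated against $\mu$.

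\textbf{Bipartite projections.} Here $\widehat{w^{\mathrm o}}(k)=\frac{\widehat w(k)-\widehat w(k+\boldsymbol\pi)}{2b(w)}$, and $b(w)=\frac{1-\widehat w(\boldsymbol\pi)}2\to\frac12$ by the third limit. Hence the same uniform estimates transfer, with the $k$ and $k+\boldsymbol\pi$ neighbourhoods both handled as above. This gives $g_{d,w^{\mathrm o}}\to1$.
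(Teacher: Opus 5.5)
Your skeleton for the $\ell^1$ families, (C), (D) with $q=1$, and (E), is the paper's. With $t=e^{-v}$, your $P_t(k_r)$ is the paper's factor $\sinh v/(\cosh v-\cos k_r)$. The ``first attempt'' in your main-obstacle paragraph is exactly how the paper gets $1-\widehat w(k)\geq c\,r^{s-d}$ in (D) and $1-\widehat w(k)\geq c\min\{1,(r/\gamma)^{\min\{s-d,2\}}\}$ in (E): a per-scale bound $c\min\{1,r^2/\text{scale}^2\}$, integrated against the mixing measure, then dominated convergence. That bound, and the uniform decay away from the origin, must be stated for the normalised product $\prod_rP_t(k_r)/P_t(0)^d$. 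The unnormalised product does not tend to zero on $\{|k|_2\geq\delta\}$: take $k=(k_1,0,\ldots,0)$, where $P_t(0)^{d-1}\to\infty$.

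There is, however, a genuine gap for the bipartite projections. Since $1-\widehat{w^{\mathrm o}}(k)=\bigl[(1-\widehat w(k))-(\widehat w(\boldsymbol\pi)-\widehat w(k+\boldsymbol\pi))\bigr]/(1-\widehat w(\boldsymbol\pi))$, you need
\[
\widehat w(\boldsymbol\pi)-\widehat w(k+\boldsymbol\pi)\leq(1-c)\bigl(1-\widehat w(k)\bigr)\qquad\text{near }k=0.
\]
The right-hand side vanishes at $k=0$, so uniform smallness of $\widehat w$ away from the origin gives nothing here. The paper obtains this inequality from the per-scale parity inequality $\Phi_{q,t}(k+\boldsymbol\pi)\geq\Phi_{q,t}(\boldsymbol\pi)\Phi_{q,t}(k)$ together with $1-\Phi_{q,t}(\boldsymbol\pi)\geq c$. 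These give $Q_{q,t}\leq\Phi_{q,t}$, hence a lower bound on $1-\widehat{w^{\mathrm o}}$ of the same form as for $1-\widehat w$. Your proposal contains no such argument.

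Your exponent is wrong in (E) for $d\geq3$. There $s>d$ is arbitrary, and once $s\geq2d$ the majorant $r^{-(s-d)}$ is not integrable, so $\alpha=s-d$ is not ``less than $d$ under the stated range''. The correct exponent is $\min\{s-d,2\}$, reflecting finite variance. In (D) you also need a parameter-free majorant, such as $r^{-\delta}$ for $s\leq d+\delta$.

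Case (D) with $q=2$ is not actually handled. Comparing with the distributional Fourier transform of $|x|_2^{-s}$ ``up to a smooth correction'' leaves open precisely the uniformity as $s\downarrow d$, where the continuum constant and the normalisation both blow up like $(s-d)^{-1}$. The paper instead uses the Gaussian mixture $|x|_2^{-s}=\Gamma(s/2)^{-1}\int_0^\infty t^{s/2-1}e^{-t|x|_2^2}\,\mathrm dt$, which factorises over coordinates like the $\ell^1$ case. It then extracts the per-scale facts from Poisson summation and the Jacobi product formula: monotonicity of the one-dimensional factor on $[0,\pi]$, $1-\phi_{2,t}(r)\geq cr^2/(t+r^2)$, and the parity inequality.

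Two smaller points. First, in (D) with $q=1$ one has $\int t^{-1}\mu(\mathrm dt)=j(0)=\infty$, so your lower bound on $\min_k\widehat w$ is vacuous. Use instead $\prod_rP_t(k_r)\geq\bigl(\tfrac{1-t}{1+t}\bigr)^d\geq1-2dt$. This gives $\min_k\widehat w(k)\geq-2d\,\mu([0,1])=-2d\,w(e_1)\to0$, and likewise $|\widehat w(\boldsymbol\pi)|\leq2d\,w(e_1)$; this is polynomially, not exponentially, small, and in (C) it equals $\tanh(\gamma/2)^d$. Second, your real-space argument for (C), restricting to $|x|\lesssim1/|k|$, only yields about $(\gamma/|k|)^d$ when $|k|\gg\gamma$. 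The required constant lower bound there comes from the bulk $|x|\gtrsim1/|k|$, which the explicit product formula captures automatically.
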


\begin{proof}
Coordinates in \(k+\boldsymbol\pi\) are taken modulo \(2\pi\). Put
\(r:=\min\{1,\max_{1\le j\le d}|k_j|\}\).
Positive constants \(c,C\) may change from line to line. They may
depend on \(d\), and
in Example~\emph{(E)} on \(s\), but not on the parameter tending to its
limit. For \(h\in\{w,w^{\mathrm{o}}\}\) and \(0<\lambda<1\), Fourier
inversion gives
\[
 \sum_{n\ge0}\lambda^nh^{*n}(0)=(2\pi)^{-d}
 \int_{[-\pi,\pi]^d}\frac{\mathrm dk}{1-\lambda\widehat h(k)},
 \qquad (2\pi)^{-d}\int_{[-\pi,\pi]^d}\widehat h(k)\,\mathrm dk=h(0)=0.
\]
Here \(h^{*n}\) is the \(n\)-fold convolution of \(h\), with
\(h^{*0}(0)=1\).
Since \((1-\lambda\widehat h)^{-1}\le2(1-\widehat h)^{-1}\) for
\(1/2\le\lambda<1\), every integrable bound used below permits
\(\lambda\uparrow1\) and yields
\begin{equation}\label{eq:green-Fourier-appendix}
 g_{d,h}=(2\pi)^{-d}\int_{[-\pi,\pi]^d}
 \frac{\mathrm dk}{1-\widehat h(k)}.
\end{equation}

For \(q\in\{1,2\}\) and \(t>0\), put
\[
 R_{q,t}(k)=\sum_{x\in\mathbb Z^d}e^{-t|x|_q^q}e^{\mathrm i k\cdot x},
 \qquad \Phi_{q,t}(k)=\frac{R_{q,t}(k)}{R_{q,t}(0)}.
\]
Direct summation and Poisson summation, respectively, give
\begin{align}
 R_{1,t}(k)&=\prod_{j=1}^d\frac{\sinh t}{\cosh t-\cos k_j},
 \label{eq:R1-product}\\
 R_{2,t}(k)&=\left(\frac\pi t\right)^{d/2}
 \sum_{m\in\mathbb Z^d}e^{-|k+2\pi m|_2^2/(4t)}.
 \label{eq:R2-poisson}
\end{align}
Define the one-dimensional factor by
\[
 \phi_{q,t}(u):=
 \frac{\sum_{n\in\mathbb Z}e^{-t|n|^q}e^{\mathrm i u n}}
 {\sum_{n\in\mathbb Z}e^{-t|n|^q}}.
\]
Then \(\Phi_{q,t}(k)=\prod_{j=1}^d\phi_{q,t}(k_j)\). Each factor is
even and
belongs to \([0,1]\). For \(q=1\),
\[
 \phi_{1,t}(u)=\frac{\cosh t-1}{\cosh t-\cos u},
\]
while for \(q=2\) the product identity gives
\[
 \phi_{2,t}(u)=\prod_{m\ge1}
 \frac{1+2e^{-(2m-1)t}\cos u+e^{-2(2m-1)t}}
 {(1+e^{-(2m-1)t})^2}.
\]
Both formulas show that \(\phi_{q,t}\) is nonincreasing on
\([0,\pi]\); hence \(\Phi_{q,t}(k)\le\phi_{q,t}(r)\). They also give
\begin{equation}\label{eq:parity-Phi}
 \Phi_{q,t}(k+\boldsymbol\pi)
 \ge\Phi_{q,t}(\boldsymbol\pi)\Phi_{q,t}(k).
\end{equation}
For \(q=1\), this follows by cross multiplication. For \(q=2\), the
difference for each factor, after multiplication by
\((1+e^{-(2m-1)t})^4\), is
\[
 4e^{-(2m-1)t}(1+e^{-2(2m-1)t})(1-\cos u)\ge0.
\]
Finite products and then their limit prove \eqref{eq:parity-Phi}. Since
\[
 1-\Phi_{q,t}(\boldsymbol\pi)=\frac{2}{R_{q,t}(0)}
 \sum_{\substack{x\in\mathbb Z^d\\|x|_1\text{ is odd}}}
 e^{-t|x|_q^q}>0,
\]
we may therefore put
\begin{equation}\label{eq:Q-bound}
 Q_{q,t}(k):=\frac{\Phi_{q,t}(k)-\Phi_{q,t}(k+\boldsymbol\pi)}
 {1-\Phi_{q,t}(\boldsymbol\pi)}\le\Phi_{q,t}(k).
\end{equation}

For \(0<t\le1\),
\begin{equation}\label{eq:basic-Fourier-bound}
 R_{q,t}(0)\ge ct^{-d/q},\qquad
 1-\Phi_{q,t}(k)\ge c\frac{r^2}{t^{2/q}+r^2},\qquad
 1-\Phi_{q,t}(\boldsymbol\pi)\ge c.
\end{equation}
For \(q=1\), these follow from \eqref{eq:R1-product},
\(1-\cos r\ge2r^2/\pi^2\), and \(\cosh t-1\le Ct^2\). For \(q=2\),
the first follows from \eqref{eq:R2-poisson}. If \(r\le\sqrt t\), then
\[
 1-\phi_{2,t}(r)=
 \frac{\sum_{n\in\mathbb Z}e^{-tn^2}(1-\cos(rn))}
 {\sum_{n\in\mathbb Z}e^{-tn^2}}
 \ge c\frac{r^2}{t},
\]
because \(\sum_{n\in\mathbb Z}e^{-tn^2}\le Ct^{-1/2}\), while the
terms with
\((2\sqrt t)^{-1}\le|n|\le t^{-1/2}\) have total weight at least
\(ct^{-1/2}\) and satisfy \(1-\cos(rn)\ge cr^2/t\). If
\(r\ge\sqrt t\), the factors in the product above are nonincreasing on
\([0,\pi]\), so \(1-\phi_{2,t}(r)\ge1-\phi_{2,t}(\sqrt t)\ge c\).
This proves the second estimate; the third is its value at
\(k=\boldsymbol\pi\). Also,
\begin{equation}\label{eq:large-t-bound}
 |R_{q,t}(k)-1|\le R_{q,t}(0)-1\le Ce^{-ct},\qquad t\ge1,
\end{equation}
because \(t|x|_q^q\ge t/2+|x|_q^q/2\) for \(x\ne0\).

\emph{Example~(C).}
Since \(R_{1,\gamma}(0)^{-1}=\tanh(\gamma/2)^d\),
\begin{equation}\label{eq:exp-symbols}
 \widehat w(k)=
 \frac{\Phi_{1,\gamma}(k)-\tanh(\gamma/2)^d}
 {1-\tanh(\gamma/2)^d},
 \qquad
 \widehat{w^{\mathrm{o}}}(k)=
 \frac{\Phi_{1,\gamma}(k)-\Phi_{1,\gamma}(k+\boldsymbol\pi)}
 {1-\Phi_{1,\gamma}(\boldsymbol\pi)}.
\end{equation}
Since \(\Phi_{1,\gamma}\ge0\),
\((2\pi)^{-d}\int_{[-\pi,\pi]^d}\widehat w(k)\,\mathrm dk=w(0)=0\),
and \(\Phi_{1,\gamma}(\boldsymbol\pi)=\tanh(\gamma/2)^{2d}\), we obtain
\[
 -\frac{\tanh(\gamma/2)^d}{1-\tanh(\gamma/2)^d}
 \le\min_{k\in[-\pi,\pi]^d}\widehat w(k)\le0,
 \qquad \widehat w(\boldsymbol\pi)=-\tanh(\gamma/2)^d.
\]
For fixed \(k\ne0\), \eqref{eq:R1-product} gives
\(\widehat w(k)\to0\); the odd transform tends to zero outside
\(\{0,\boldsymbol\pi\}\). The first transform in
\eqref{eq:exp-symbols} is at most \(\Phi_{1,\gamma}(k)\) because
\(\Phi_{1,\gamma}\le1\), and the second is at most this quantity by
\eqref{eq:parity-Phi}. Hence, for \(h\in\{w,w^{\mathrm{o}}\}\),
\[
 \frac1{1-\widehat h(k)}\le C(1+r^{-2})
\]
by \eqref{eq:basic-Fourier-bound}. This is integrable for \(d\ge3\), so
dominated convergence in \eqref{eq:green-Fourier-appendix} proves both
Green-function limits.

{
\emph{Example~(D).}
Fix $q\in\{1,2\}$ and $0<\delta<d$, and let $d<s\le d+\delta$. Put
\[
F_{s,q}(k)=\sum_{x\ne0}|x|_q^{-s}e^{\mathrm ik\cdot x}
=\frac1{\Gamma(s/q)}\int_0^\infty
t^{s/q-1}(R_{q,t}(k)-1)\,\mathrm dt.
\]
Equations~\eqref{eq:R1-product}, \eqref{eq:R2-poisson} and \eqref{eq:large-t-bound} give
\[
c(s-d)^{-1}\le F_{s,q}(0)\le C(s-d)^{-1},
\qquad \inf_kF_{s,q}(k)\ge-C.
\]
For fixed $k\ne0$, the same formulas show that $F_{s,q}(k)$ remains bounded as $s\downarrow d$: for $q=1$ one has $R_{1,t}(k)\le C_k t^{2-d}$ for $0<t\le1$, while for $q=2$ one has $R_{2,t}(k)\le C_k t^{-d/2}e^{-c_k/t}$. Hence
\[
\widehat w(k)=\frac{F_{s,q}(k)}{F_{s,q}(0)}\to0,
\qquad
\widehat{w^{\mathrm o}}(k)=
\frac{F_{s,q}(k)-F_{s,q}(k+\boldsymbol\pi)}
{F_{s,q}(0)-F_{s,q}(\boldsymbol\pi)}\to0
\]
outside $\{0\}$ and $\{0,\boldsymbol\pi\}$, respectively, and $\min_k\widehat w(k)\to0$.
For either $h=w$ or $h=w^{\mathrm o}$, restrict its Laplace expression for $1-\widehat h(k)$ to $0<t<r^q$. Equations~\eqref{eq:basic-Fourier-bound} and \eqref{eq:Q-bound} then give
\[
1-\widehat h(k)\ge c(s-d)
\int_0^{r^q}t^{(s-d)/q-1}\,\mathrm dt
\ge c r^{s-d}\ge c r^\delta.
\]
The integrable majorant $C r^{-\delta}$ permits dominated convergence in \eqref{eq:green-Fourier-appendix}, proving the two Green-function limits.
}

\emph{Example~(E).}
Keep \(s>d\) fixed and put
\[
 F_\gamma(k)=\sum_{x\in\mathbb Z^d}
 (1+\gamma|x|_1)^{-s}e^{\mathrm i k\cdot x}.
\]
The scalar Laplace formula and absolute convergence give
\begin{equation}\label{eq:regularized-Laplace}
 F_\gamma(k)=\frac1{\Gamma(s)}\int_0^\infty
 u^{s-1}e^{-u}R_{1,\gamma u}(k)\,\mathrm du.
\end{equation}
Thus \(F_\gamma(k)\ge0\) and
\(0<F_\gamma(\boldsymbol\pi)<1\). Since
\(\gamma^dR_{1,\gamma u}(0)\to2^du^{-d}\) and, for
\(0<\gamma\le1\), is at most \(C(1+u^{-d})\), dominated convergence
gives
\begin{equation}\label{eq:F0-asymptotic}
 \gamma^dF_\gamma(0)\to\frac{2^d\Gamma(s-d)}{\Gamma(s)}.
\end{equation}
The Fourier transforms are
\[
 \widehat w(k)=\frac{F_\gamma(k)-1}{F_\gamma(0)-1},
 \qquad
 \widehat{w^{\mathrm{o}}}(k)=
 \frac{F_\gamma(k)-F_\gamma(k+\boldsymbol\pi)}
 {F_\gamma(0)-F_\gamma(\boldsymbol\pi)}.
\]
Consequently,
\[
 -\frac1{F_\gamma(0)-1}\le
 \min_{k\in[-\pi,\pi]^d}\widehat w(k)\le0.
\]
For fixed \(k\ne0\), dominated convergence in
\[
 \gamma^dF_\gamma(k)=\frac1{\Gamma(s)}\int_0^\infty
 u^{s-1}e^{-u}[\gamma^dR_{1,\gamma u}(0)]
 \Phi_{1,\gamma u}(k)\,\mathrm du
\]
gives \(\gamma^dF_\gamma(k)\to0\), because
\(\Phi_{1,\gamma u}(k)\to0\) for every \(u>0\) and the majorant is
\(Cu^{s-1}e^{-u}(1+u^{-d})\). Hence
\(\widehat w(k)\to0\) for \(k\ne0\), including
\(k=\boldsymbol\pi\), and \(\widehat{w^{\mathrm{o}}}(k)\to0\) outside
\(\{0,\boldsymbol\pi\}\).

From \eqref{eq:regularized-Laplace}, \eqref{eq:F0-asymptotic}, and
\eqref{eq:basic-Fourier-bound},
\[
\begin{aligned}
 1-\widehat w(k)
 &=\frac{\int_0^\infty u^{s-1}e^{-u}R_{1,\gamma u}(0)
 (1-\Phi_{1,\gamma u}(k))\,\mathrm du}
 {\Gamma(s)(F_\gamma(0)-1)}\\
 &\ge c\int_0^1u^{s-d-1}
 \frac{(r/\gamma)^2}{u^2+(r/\gamma)^2}\,\mathrm du
 \ge c\min\{1,(r/\gamma)^{\min\{s-d,2\}}\}.
\end{aligned}
\]
The last inequality follows by integrating over \((0,r/\gamma)\) when
\(r\le\gamma\) and \(s-d\le2\), over \((1/2,1)\) when
\(r\le\gamma\) and \(s-d>2\), and over \((0,1)\) when
\(r\ge\gamma\). For the odd restriction, the exact identity is
\[
 1-\widehat{w^{\mathrm{o}}}(k)=
 \frac{\int_0^\infty u^{s-1}e^{-u}[R_{1,\gamma u}(0)-
 R_{1,\gamma u}(\boldsymbol\pi)](1-Q_{1,\gamma u}(k))\,\mathrm du}
 {\int_0^\infty u^{s-1}e^{-u}[R_{1,\gamma u}(0)-
 R_{1,\gamma u}(\boldsymbol\pi)]\,\mathrm du}.
\]
Here \(1-Q_{1,\gamma u}(k)\ge1-\Phi_{1,\gamma u}(k)\), while for
\(0<u<1\) and small \(\gamma\),
\(R_{1,\gamma u}(0)-R_{1,\gamma u}(\boldsymbol\pi)
\ge cR_{1,\gamma u}(0)\); the denominator is at most
\(\Gamma(s)F_\gamma(0)\). Thus the same lower bound holds with
\(w^{\mathrm{o}}\) in place of \(w\). Hence, for
\(h\in\{w,w^{\mathrm{o}}\}\),
\[
 \frac1{1-\widehat h(k)}
 \le C\left(1+r^{-\min\{s-d,2\}}\right).
\]
This is integrable because \(\min\{s-d,2\}<d\): for \(d\le2\) this
follows from \(s-d<d\), and for \(d\ge3\) from \(2<d\). Dominated
convergence proves the two Green-function limits and completes the
proof.
\end{proof}

\begin{proof}[Proofs of the exponential and polynomial corollaries]
Admissibility is proved in Section~\ref{sect:proofOSpositivity}.
In Example~\emph{(C)},
\[
\begin{aligned}
 g_{d,w_{\rho,\gamma}}
 &=\frac{g_{d,w_{0,\gamma}}}{1-\rho},\\
 a_{w_{\rho,\gamma}}
 &=\max\left\{0,-\rho-(1-\rho)
 \min_{k\in[-\pi,\pi]^d}\widehat w_{0,\gamma}(k)\right\},\\
 b(w_{\rho,\gamma})
 &=\frac{1-\rho}{2}
   (1-\widehat w_{0,\gamma}(\boldsymbol\pi)),\\
 q_{w_{\rho,\gamma}}
 &=\min\left\{1,
 \frac{2e\rho}
 {(1-\rho)(1-\widehat w_{0,\gamma}(\boldsymbol\pi))}\right\}.
\end{aligned}
\]

Fix \(\rho<1\), and in Example~\emph{(E)} also fix \(s\).
Lemma~\ref{lem:examples-C-E-fourier} shows that the RLP lower bound
tends to
$
 \frac{1-2\rho}{1-\rho},
$
which is positive exactly when \(\rho<1/2\). The MDD lower bound tends
to
$
 2-\min\left\{1,\frac{2e\rho}{1-\rho}\right\}
 -\frac1{1-\rho},
$
which is positive exactly when
\(0\le\rho<1/[2(e+1)]=\rho_*^{\mathrm{mdd}}\). By the definition of
convergence, for each fixed \(\rho<\rho_*^{\mathrm{mdd}}\) both lower
bounds are positive for all sufficiently small \(\gamma\) in
Examples~\emph{(C)} and~\emph{(E)}, and for all \(s>d\) sufficiently
close to \(d\) in Example~\emph{(D)}. The RLP lower bound alone has
the same property for each fixed \(\rho<1/2\).
Theorem~\ref{thm:longrangeorder2} proves these assertions.

The bipartite projections are \(w_{0,\gamma}^{\mathrm{o}}\),
\(w_{0,s,q}^{\mathrm{o}}\), and \(w_{0,s,\gamma}^{\mathrm{o}}\),
respectively, and do not depend on \(\rho\). The fourth estimate in
\eqref{eq:examples-C-E-limits} makes their Green functions smaller than
\(4/3\) for all sufficiently small \(\gamma\) in
Examples~\emph{(C)} and~\emph{(E)}, and for $s>d$ sufficiently close to $d$ in Example~\emph{(D)}. Here \(s\) in
Example~\emph{(E)} is fixed in the stated range{.}
Theorem~\ref{thm:longrangeorder} gives the remaining conclusions.
\end{proof}

\subsection{Proof of Gaussian domination}

\begin{lem}
\label{lem:Zh+-}
Let \(\Theta\) be a mid-edge reflection. For
\(h\in\mathbb R^{\T_L}\), define
\[
h_x^{\pm,\Theta}:=
\begin{cases}
h_x,&x\in\T_L^\pm,\\
h_{\Theta x},&x\in\T_L^\mp.
\end{cases}
\]
Then
\[
\lvert Z_A(h)\rvert^2
\leq Z_A(h^{+,\Theta})Z_A(h^{-,\Theta}).
\]
\end{lem}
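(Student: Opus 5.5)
The plan is to rewrite \(Z_A(h)\) as a reflection-positive bilinear pairing between a functional built from \(h|_{\T_L^+}\) and one built from \(h|_{\T_L^-}\). The inequality then becomes the Cauchy--Schwarz inequality for that pairing. This follows the proof of Proposition~\ref{prop:reflectionpos}.

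\textbf{Step 1: regroup the exponent.} Put \(a_x:=S_x^1+h_x\), and leave the second component \(k=2\) as in \(H\). The bracket in \eqref{eq:centralquantity} for \(k=1\) equals
\[
(a_i-a_j)^2-(S_i^1)^2-(S_j^1)^2=a_i^2+a_j^2-2a_ia_j-(S_i^1)^2-(S_j^1)^2 .
\]
For \(k=2\) it equals \(-2S_i^2S_j^2\). Using ordered pairs as in the proof of Proposition~\ref{prop:reflectionpos}, split the edge sum into three parts: edges inside \(\T_L^+\), edges inside \(\T_L^-\), and crossing pairs \((x,\Theta z)\) with \(x,z\in\T_L^+\). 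The one-site terms \(\tfrac12 w_{x,y}(a_x^2-(S_x^1)^2)\) from crossing edges are attached to the half containing \(x\). The integrand then factorises as
\[
\boldsymbol\gamma^{A,+}\,e^{\mathcal H^+(h)}\;\boldsymbol\gamma^{A,-}\,e^{\mathcal H^-(h)}\;\exp\Bigl\{\sum_{x,z\in\T_L^+}w^{(L)}_{x,\Theta z}\bigl(a_xa_{\Theta z}+S_x^2S_{\Theta z}^2\bigr)\Bigr\},
\]
where \(\mathcal H^\pm(h)\) has domain \(\T_L^\pm\) and depends on \(h\) only through \(h|_{\T_L^\pm}\).

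\textbf{Step 2: express the minus-side objects as reflections.} By \eqref{eq:complexalternation} we have \(S_{\Theta z}=\overline{\Theta S_z}\), and \(h\) is real, so \(a_{\Theta z}=\overline{\Theta(S_z^1+h_{\Theta z})}\). Define \(\tilde a_z:=S^1_z+h_{\Theta z}\) for \(z\in\T_L^+\); this is built from \(h^{-,\Theta}\). In the same way, \eqref{eq:gamma-reflection} and reflection invariance of \(w^{(L)}\) give
\[
\boldsymbol\gamma^{A,-}e^{\mathcal H^-(h)}=\overline{\Theta\bigl(\boldsymbol\gamma^{A,+}e^{\mathcal H^+(h^{-,\Theta})}\bigr)} .
\]
The kernel \(w^{(L)}_{x,\Theta z}\) is positive semidefinite by Lemma~\ref{lem:OS-periodisation}, so we diagonalise it as \(\sum_\alpha\lambda_\alpha\varphi_\alpha(x)\overline{\varphi_\alpha(z)}\) with \(\lambda_\alpha\ge0\). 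This turns the crossing exponent into
\[
\sum_\alpha\lambda_\alpha\bigl(B_\alpha\overline{\Theta\tilde B_\alpha}+B^2_\alpha\overline{\Theta B^2_\alpha}\bigr),
\]
where \(B_\alpha=\sum_x\varphi_\alpha(x)a_x\), \(\tilde B_\alpha=\sum_x\varphi_\alpha(x)\tilde a_x\) and \(B^2_\alpha=\sum_x\varphi_\alpha(x)S^2_x\). Expanding the exponential yields
\[
Z_A(h)=\sum_{\boldsymbol m}c_{\boldsymbol m}\bigl\langle F_{\boldsymbol m}(h^{+,\Theta})\,\overline{\Theta F_{\boldsymbol m}(h^{-,\Theta})}\bigr\rangle_0 ,
\]
with \(c_{\boldsymbol m}\ge0\). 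Here \(F_{\boldsymbol m}(g)\in\mathcal A^+\) is one fixed functional, the same for both arguments, evaluated at \(g\). It depends on \(g\) only through \(g|_{\T_L^+}\).

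\textbf{Step 3: Cauchy--Schwarz and identification.} Since \(\langle\cdot\rangle_0\) is reflection positive, the map \((F,G)\mapsto\sum_{\boldsymbol m} c_{\boldsymbol m}\langle F_{\boldsymbol m}\overline{\Theta G_{\boldsymbol m}}\rangle_0\) is a positive-semidefinite Hermitian form, as in Proposition~\ref{prop:reflectionpos}. Cauchy--Schwarz then gives
\[
|Z_A(h)|^2\le\Bigl(\sum c_{\boldsymbol m}\langle F_{\boldsymbol m}(h^{+})\overline{\Theta F_{\boldsymbol m}(h^{+})}\rangle_0\Bigr)\Bigl(\sum c_{\boldsymbol m}\langle F_{\boldsymbol m}(h^{-})\overline{\Theta F_{\boldsymbol m}(h^{-})}\rangle_0\Bigr),
\]
where \(h^\pm=h^{\pm,\Theta}\). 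Reversing Steps~1--2 for the reflection-symmetric fields \(h^{\pm,\Theta}\) identifies each factor with \(Z_A(h^{\pm,\Theta})\).

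The main obstacle is the bookkeeping in Steps~1--2. One must check that every one-site term, including the diagonal pieces \((h_i-h_j)^2\) of crossing edges, is assigned consistently to a half. One must also check that the same functional \(F_{\boldsymbol m}\) appears on both sides, so that the identification in Step~3 is exact. Absolute convergence of the series needs only finiteness of the torus and boundedness of the spins, exactly as in Proposition~\ref{prop:reflectionpos}.
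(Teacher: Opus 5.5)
Your proposal is correct and follows essentially the same route as the paper: split the exponent of \eqref{eq:centralquantity} into half-supported parts plus a crossing term, diagonalise the positive-semidefinite kernel $w^{(L)}_{x,\Theta z}$ as in Proposition~\ref{prop:reflectionpos} using the $h$-shifted variables $B^1_{\alpha,h}$ and $B^1_{\alpha,h\circ\Theta}$, expand the exponential, and apply Cauchy--Schwarz first termwise and then to the coefficient sum. The differences are cosmetic: the paper states this as a general inequality for $\exp\{P+\overline{\Theta Q}+\sum_r C_r\overline{\Theta D_r}\}$ under $\langle\cdot\rangle_{1,A}$, whereas you absorb $\boldsymbol\gamma^{A,+}$ into $F_{\boldsymbol m}$ and work under $\langle\cdot\rangle_0$; also, your one-site terms should carry a minus sign.
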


\begin{proof}
Recall that the unnormalised functional
\(\langle\cdot\rangle_{1,A}\) introduced in the proof of
Proposition~\ref{prop:reflectionpos} is reflection positive. Hence
\[
(f,g)_\Theta:=
\left\langle f\,\overline{\Theta g}\right\rangle_{1,A}
\]
{defines a positive-semidefinite Hermitian form on $\mathcal A^+$.}
Let \(P,Q\in\mathcal A^+\) be bounded, and let
\((C_r)_{r=1}^m\) and \((D_r)_{r=1}^m\) be finite families of bounded functions in
\(\mathcal A^+\). Expanding the mixed exponential and applying
Cauchy--Schwarz gives
\begin{equation}
\label{eq:exp-reflection}
\begin{aligned}
&\left\lvert
\left\langle
\exp\left\{P+\overline{\Theta Q}
+\sum_{r=1}^m C_r\,\overline{\Theta D_r}\right\}
\right\rangle_{1,A}
\right\rvert^2
\\
&\quad\leq
\left\langle
\exp\left\{P+\overline{\Theta P}
+\sum_{r=1}^m C_r\,\overline{\Theta C_r}\right\}
\right\rangle_{1,A}
\left\langle
\exp\left\{Q+\overline{\Theta Q}
+\sum_{r=1}^m D_r\,\overline{\Theta D_r}\right\}
\right\rangle_{1,A}.
\end{aligned}
\end{equation}
Indeed, after expanding, one first applies Cauchy--Schwarz to the
reflection form and then to the sum of its coefficients.

We now use the spectral decomposition from the proof of
Proposition~\ref{prop:reflectionpos}. Set, for
\(\alpha=1,\ldots,N\) and \(i=1,2\),
\[
B_{\alpha,h}^i
:=B_\alpha^i+\delta_{i,1}
\sum_{x\in\T_L^+}\varphi_\alpha(x)h_x.
\]
The part of the exponent in \eqref{eq:centralquantity} coupling the two
halves is
\[
\sum_{\alpha=1}^N\sum_{i=1}^2
\lambda_\alpha B_{\alpha,h}^i\,
\overline{\Theta\!\left(B_{\alpha,h\circ\Theta}^i\right)},
\]
while all remaining terms are supported entirely in one of the two
halves. Apply \eqref{eq:exp-reflection} with the two families
\[
\left(\sqrt{\lambda_\alpha}B_{\alpha,h}^i\right)_{\alpha,i},
\qquad
\left(\sqrt{\lambda_\alpha}B_{\alpha,h\circ\Theta}^i\right)_{\alpha,i}.
\]
The two diagonal expressions are precisely
\(Z_A(h^{+,\Theta})\) and \(Z_A(h^{-,\Theta})\), respectively, and
are nonnegative by reflection positivity. This proves the claim.
\end{proof}

\begin{proof}[Proof of Proposition~\ref{prop:gaussian domination}]
The change of variables \(\boldsymbol s\mapsto-\boldsymbol s\) in
\eqref{eq:centralquantity} shows that \(Z_A(h)\in\mathbb R\) for real
\(h\). Since the spins are bounded, \eqref{eq:centralquantity} implies
\[
\lvert Z_A(h)\rvert
\leq C\exp\left\{-\tfrac12\sum_{\{x,y\}\in E}w_{x,y}(h_x-h_y)^2
+C\sqrt{\sum_{\{x,y\}\in E}w_{x,y}(h_x-h_y)^2}\right\}.
\]
Since $Z_A(h+c\mathbf1)=Z_A(h)$, restrict to the mean-zero subspace
$\{h:\sum_xh_x=0\}${. Irreducibility makes the positive-weight torus
graph connected, so the quadratic form
$\sum_{\{x,y\}\in E}w_{x,y}(h_x-h_y)^2$ is positive definite on this
subspace. The displayed upper bound therefore tends to zero as
$\lVert h\rVert\to\infty$ there. Since $Z_A(0)>0$, continuity implies
that} $\lvert Z_A\rvert$ attains a maximum. Among its maximisers choose \(h^\star\) minimising
\[
N(h):=\#\bigl\{\{x,y\}:x,y\in\T_L\text{ are nearest neighbours and }
h_x\neq h_y\bigr\}.
\]
If \(N(h^\star)>0\), reflect through the midpoint of a nearest-neighbour
edge on which \(h^\star\) is not constant. Lemma~\ref{lem:Zh+-} and
maximality give
\[
\lvert Z_A(h^\star)\rvert^2
\leq Z_A(h^{\star,+,\Theta})Z_A(h^{\star,-,\Theta})
\leq \lvert Z_A(h^\star)\rvert^2.
\]
Hence both reflected fields are again maximisers after subtracting their respective means; this changes neither $Z_A$ nor $N$. On the other hand,
at least one of them has strictly fewer discontinuous nearest-neighbour
edges than \(h^\star\), a contradiction. Therefore \(N(h^\star)=0\),
so \(h^\star=c\mathbf 1\) for some \(c\in\mathbb R\). Since
\eqref{eq:centralquantity} depends on \(h\) only through differences,
\(Z_A(c\mathbf 1)=Z_A(0)\). Consequently,
\[
Z_A(h)\leq\lvert Z_A(h)\rvert\leq Z_A(0),
\]
as claimed; see also \cite[Proposition~10.27]{Velenik}.
\end{proof}

\section*{Acknowledgements} 
AK’s research was partially supported by the Cusanuswerk. 
AK would also like to thank Volker Betz for making the {research visit to Sapienza Universit\`a di Roma} possible.
The work of W.W. was partially supported by MOST grant 2021YFA1002700.

\end{document}